\newtheorem{thm}{Theorem}[section]
\newtheorem{prop}[thm]{Proposition}
\newtheorem{lem}[thm]{Lemma}
\newtheorem{cor}[thm]{Corollary}
\newtheorem{definition}[thm]{Definition}
\newcommand{\hK}{hyper-K\"ahler\ }
\newcommand{\HK}{Hyper-K\"ahler\ }
\newcommand{\ImH}{{\rm Im}\mathbb{H}}
\newcommand{\quater}{\mathbb{H}}
\newcommand{\Z}{\mathbb{Z}}
\newcommand{\R}{\mathbb{R}}
\newcommand{\C}{\mathbb{C}}
\newcommand{\hL}{holomorphic\ Lagrangian\ }
\newcommand{\sL}{special\ Lagrangian\ }
\begin{document}
\title{New examples of compact special Lagrangian submanifolds
embedded  
in hyper-K\"ahler manifolds}
\author{Kota Hattori}
\date{}
\maketitle
{\abstract We construct smooth families of 
compact special Lagrangian submanifolds embedded in some toric \hK 
manifolds, which never become \hL submanifolds via any \hK rotations.
These families converge to \sL immersions with 
self-intersection points in the sense of current.
To construct them, we apply the desingularization method developed by Joyce.}
\section{Introduction}
In $1982$, Harvey and Lawson have introduced in \cite{harvey1982calibrated} 
the notion of calibrated 
submanifolds in Riemannian manifold. 
They were recognized by many researchers 
as the important class of minimal submanifolds 
which had already been well-studied for a long time. 
One of the importances of calibrated 
submanifolds is the volume minimizing property, 
that is, 
every compact calibrated submanifold minimizes the 
volume functional in its homology class.

The several kinds of calibrated submanifolds are defined 
in the Riemannian manifolds with special holonomy.
For example, special Lagrangian submanifolds are the 
middle dimensional calibrated submanifolds 
embedded in Riemannian manifolds with $SU(n)$ holonomy, 
so called Calabi-Yau manifolds. 
In \hK manifolds, which are Riemannian manifolds with $Sp(n)$ holonomy, 
there is a notion of \hL submanifolds those are calibrated by the $n$-th power of the K\"ahler form.
At the same time, \hK manifolds are naturally regarded as Calabi-Yau manifolds, 
\sL submanifolds also make sense in these manifolds. 
Hence there are two kinds of calibrated submanifolds 
in \hK manifolds, 
and it is well-known that every \hL submanifold becomes 
\sL by the \hK rotations.
The converse may not holds although the counterexamples have not been found. 

Another importance of calibrated geometry is that some of the calibrated 
submanifolds have the moduli spaces with good structure. 
For instance, 
McLean has shown that the moduli space of 
compact special Lagrangian submanifolds becomes a smooth manifold, 
whose dimension is equal to the first betti number of the 
special Lagrangian submanifold \cite{mclean1996deformations}.

Although the construction of compact \sL submanifolds 
embedded in Calabi-Yau manifolds is not 
easy in general,
Y. I. Lee \cite{lee2003embedded}, Joyce \cite{joyce2003special}\cite{joyce2004special}
and D. A. Lee \cite{lee2004connected} 
developed the gluing method for the construction 
of families of compact \sL submanifolds converging to \sL immersions 
with self-intersection points as a sense of current.
Moreover D. A. Lee construct a non-totally geodesic \sL submanifold 
in the flat torus by applying his gluing method.
After these working, several concrete examples of \sL submanifolds are 
constructed by gluing method.
See \cite{haskins2007slag}\cite{chan2009desingularization1}\cite{chan2009desingularization2},
for example.

In this paper we apply the result in \cite{joyce2003special}\cite{joyce2004special} to the 
construction of new examples of compact \sL submanifolds 
embedded in toric \hK manifolds. 
Moreover, these examples never become \hL submanifolds 
with respect to any complex structures given by the 
\hK rotations.

A \hK manifold is a Riemannian manifold $(M^{4n},g)$ 
equipped with an integrable hypercomplex structure $(I_1,I_2,I_3)$, 
so that $g$ is hermitian with respect to every $I_\alpha$, 
and $\omega_\alpha:=g(I_\alpha\cdot,\cdot)$ are closed. 
For any $\theta\in \R$, note that $e^{\sqrt{-1}\theta}(\omega_2+\sqrt{-1}\omega_3)$ becomes 
a holomorphic symplectic $2$-form with respect to $I_1$.
If the holomorphic symplectic form vanishes on 
a submanifold $L^{2n}\subset M$, 
$L$ is called a \hL submanifold.
Clearly, this definition is not depend on $\theta$.

Similarly, we can define the notion of 
\hL submanifold with respect to a complex structure $aI_1+bI_2+cI_3$ 
for every unit vector $(a,b,c)$ in $\R^3$.
The new complex structure $aI_1+bI_2+cI_3$ is called a \hK rotation 
of $(M,g,I_1,I_2,I_3)$.

The \hK manifold $M$ is naturally regarded as 
the Calabi-Yau manifold by 
the complex structure $I_1$, 
the K\"ahler form $\omega_1$ and the holomorphic volume form 
$(\omega_2 + \sqrt{-1}\omega_3)^n$. 
Then we can easy to see that 
\hL submanifolds with respect to $\cos(\alpha\pi/n) I_2 + \sin(\alpha\pi/n) I_3$ are 
\sL for every $\alpha =1,\cdots, 2n$.
Conversely, it has been unknown whether 
there exist \sL submanifolds embedded in \hK manifolds 
never come from \hL submanifolds with respect to any complex structure 
given by the \hK rotations.
The main result of this paper is described as follows.

\begin{thm}
Let $n\ge 2$. 
There exist smooth compact \sL submanifolds
$\{ \tilde{L}_t\}_{0<t< \delta}$ 
and $\{ L_\alpha\}_{\alpha = 1,\cdots,2n}$ 
embedded in a \hK manifold $M^{4n}$,
which satisfy $\lim_{t\to 0}\tilde{L}_t = \bigcup_\alpha L_\alpha$ 
in the sense of current, and $\tilde{L}_t$ is diffeomorphic to 
$2n(\mathbb{P}^1)^n \# (S^1\times S^{2n-1})$.
Moreover, each $L_\alpha$ is the \hL submanifold of $M$
with respect to $\cos(\alpha\pi/n) I_2 + \sin(\alpha\pi/n) I_3$, 
although $\tilde{L}_t$ never become \hL submanifolds 
with respect to any complex structure 
given by the \hK rotations.
\label{main1}
\end{thm}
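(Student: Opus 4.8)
The plan is to realize $\bigcup_\alpha L_\alpha$ as a \sL integral current with isolated conical singularities of the mildest type --- unions of two transverse \sL planes --- to desingularize it by Joyce's gluing theorem with rescaled Lawlor necks, and finally to exclude the \hL property by a convergence-plus-linear-algebra argument. For the first step I would fix a toric \hK manifold $M^{4n}$ together with $2n$ compact complex submanifolds $L_1,\dots,L_{2n}$, each diffeomorphic to $(\mathbb{P}^1)^n$, so that $L_\alpha$ is complex Lagrangian with respect to $J_\alpha:=\cos(\alpha\pi/n)I_2+\sin(\alpha\pi/n)I_3$. Each $L_\alpha$ would be a product of $n$ rational curves, one in each quaternionic direction, and the three real parameters of the \hK quotient defining $M$ must be tuned so that the classes $[\omega_{J_\alpha}]$ pair positively with these curve classes (so the $L_\alpha$ exist as holomorphic submanifolds) and so that $L_\alpha$ and $L_{\alpha+1}$ meet transversally in a single point $p_\alpha$ (indices mod $2n$), non-consecutive ones being disjoint. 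By the computation recalled in the introduction, a \hL submanifold with respect to $J_\alpha$ is calibrated by $(-1)^\alpha\mathrm{Re}\,\Omega$, so each $L_\alpha$ is \sL for the Calabi--Yau structure $(I_1,\omega_1,\Omega)$ and, with the appropriate orientations, $\bigcup_\alpha L_\alpha$ is a \sL integral current.

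Next, at each $p_\alpha$ the tangent cone is $T_{p_\alpha}L_\alpha\cup T_{p_\alpha}L_{\alpha+1}$, a union of two \sL planes. Since $T_{p_\alpha}L_\alpha$ is $J_\alpha$-complex and $T_{p_\alpha}L_{\alpha+1}$ is $J_{\alpha+1}$-complex, and $J_{\alpha+1}$ differs from $J_\alpha$ by a twistor rotation through $\pi/n$, one checks that in suitable unitary coordinates on $T_{p_\alpha}M\cong\mathbb{C}^{2n}$ the two planes are $\mathbb{R}^{2n}$ and $e^{\sqrt{-1}\phi_1}\mathbb{R}\times\dots\times e^{\sqrt{-1}\phi_{2n}}\mathbb{R}$ with all $\phi_j\in(0,\pi)$ and $\sum_j\phi_j=\pi$, each of the $n$ quaternionic factors contributing two angles of sum $\pi/n$. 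This is exactly the condition producing a Lawlor neck $N_\alpha\subset\mathbb{C}^{2n}$, an Asymptotically Conical \sL diffeomorphic to $S^{2n-1}\times\mathbb{R}$ asymptotic to this pair of planes. Thus $\bigcup_\alpha L_\alpha$ has isolated conical singularities modeled on these (stable) cones, and I would invoke the desingularization theorem of \cite{joyce2003special,joyce2004special} to glue a rescaled copy $tN_\alpha$ into a neighbourhood of each $p_\alpha$: for small $t>0$ this yields a smooth compact embedded \sL $\tilde L_t\subset M$ with $\tilde L_t\to\bigcup_\alpha L_\alpha$ as currents and, away from the $p_\alpha$, smoothly. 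Embeddedness for small $t$ is clear, and the surgery picture --- remove a small ball around $p_\alpha$ in each of $L_\alpha,L_{\alpha+1}$ and insert $S^{2n-1}\times[0,1]$ --- identifies $\tilde L_t$ with the connected sum of the $2n$ copies of $(\mathbb{P}^1)^n$ along a cycle, namely $2n(\mathbb{P}^1)^n\#(S^1\times S^{2n-1})$.

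To see that $\tilde L_t$ is never \hL for $t$ small, suppose for contradiction that $\tilde L_{t_k}$ is \hL with respect to $J^{(k)}=a_kI_1+b_kI_2+c_kI_3$ for some $t_k\downarrow0$, and pass to a subsequence with $J^{(k)}\to J^{\infty}$ in the twistor sphere. Away from the $p_\alpha$, $\tilde L_{t_k}\to L_\alpha$ smoothly and each $\tilde L_{t_k}$ is $J^{(k)}$-complex there, so every tangent space of $L_\alpha$ is $J^{\infty}$-invariant; hence $L_\alpha$ is a $J^{\infty}$-complex submanifold. It is also $J_\alpha$-complex. If $J^{\infty}\neq\pm J_\alpha$, the subalgebra of $\mathrm{End}(T_pM)$ generated by $J^{\infty}$ and $J_\alpha$ is the full quaternion action, so $TL_\alpha$ is a quaternionic subbundle; but then $\omega_1|_{L_\alpha}\neq0$, contradicting the Lagrangian condition $\omega_1|_{L_\alpha}=0$. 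Therefore $J^{\infty}=\pm J_\alpha$ for every $\alpha=1,\dots,2n$, which is impossible when $n\geq2$ because the lines $\mathbb{R}J_\alpha$ are not all equal (e.g. $\mathbb{R}J_1\neq\mathbb{R}J_2$). After shrinking $\delta$, no $\tilde L_t$ with $0<t<\delta$ is \hL for any \hK rotation.

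The hard part will be the second step: producing a toric \hK manifold that carries precisely this cyclic configuration of complex-Lagrangian $(\mathbb{P}^1)^n$'s with the prescribed transverse double points, and verifying the hypotheses of Joyce's theorem --- above all the stability of the cones and the vanishing of the obstruction to gluing in all $2n$ Lawlor necks simultaneously. Once the cones are recognized as the simplest possible (transverse pairs of planes with angle sum $\pi$), this should reduce to a combinatorial choice of toric data together with an index count for the linearized operator, but that is where the genuine work lies; the remaining steps are routine consequences of Joyce's machinery and the calibration bookkeeping above.
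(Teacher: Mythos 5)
Your overall architecture matches the paper's: realize the $L_\alpha$ as $\sigma(\alpha\pi/n)$-\hL tori-invariant $(\mathbb{P}^1)^n$'s in a toric \hK manifold (a product of multi-Eguchi-Hanson spaces in the paper), check that each transverse intersection point has characterizing angles $\pi/2n$ with multiplicity $2n$ (hence type $1$), and feed the cyclic configuration into Joyce's desingularization theorem. However, the step you defer as ``the hard part'' is precisely where the bulk of the paper's proof lives, and it is a genuine gap in the proposal: you never exhibit the toric data. The paper takes $u=(I_n\ \cdots\ I_n)\in{\rm Hom}(\Z^{2n^2},\Z^n)$, builds the $2n$ hyperrectangles $\square_k$ from segments $\mathbf{l}_{k,\alpha}$, and must choose the $\rho_{k,\alpha}$ delicately (Proposition \ref{disjoint} and the $A_+\sqcup A_-$ splitting) to force $\square_k\cap\square_l=\emptyset$ for $d_{2n}(k,l)>1$ --- the author explicitly notes this disjointness fails for generic parameters. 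The paper also reformulates Joyce's hypothesis $(\R_{>0})^{\mathcal E}\cap\ker\partial\neq\emptyset$ as ``$\mathcal E$ is covered by cycles'' (Lemma \ref{quiver1}) and proves the diffeomorphism type $2n(\mathbb{P}^1)^n\#(S^1\times S^{2n-1})$ via the quiver surgery of Proposition \ref{topology}, keeping track of the alternating orientations $(-1)^k(\omega^{\sigma(k\pi/n)})^n={\rm Re}\,\Omega|_{L_{\square_k}}$; your surgery picture is right in spirit but skips the orientation bookkeeping and the counting of $\mathbf{h}_1$ that pins down the single $S^1\times S^{2n-1}$ summand.

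Your exclusion of the \hL property is a genuinely different argument from the paper's, and it is worth comparing. You argue by compactness on the twistor sphere: if $\tilde L_{t_k}$ were $J^{(k)}$-complex with $t_k\to 0$, the limit $J^\infty$ would make every $L_\alpha$ a $J^\infty$-complex Lagrangian, forcing $J^\infty=\pm J_\alpha$ for all $\alpha$ (else $TL_\alpha$ would be quaternionic, contradicting $\omega_1|_{L_\alpha}=0$), which is impossible for $n\ge 2$. This is correct in outline, but note two costs: it requires $C^1_{\rm loc}$ convergence of $\tilde L_{t_k}$ to $L_\alpha$ away from the intersection points (available from the explicit graphical structure of Joyce's construction, or from Allard regularity, but not from convergence of currents alone, which is all Theorem \ref{gluing} states), and it only excludes the \hL property after shrinking $\delta$. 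The paper instead argues homologically (Section \ref{sec7}): pairing $[\omega_2+\sqrt{-1}\omega_3]^n$ with $\sum_\alpha\varepsilon_\alpha[L_\alpha]$ gives $\sum_\alpha V(L_\alpha)$, while a $\sigma(\theta)$-\hL representative would give $\sum_\alpha\varepsilon_\alpha\cos^n(\theta-\theta_\alpha)V(L_\alpha)$, forcing all $\theta_\alpha\in\theta+\pi\Z$ --- impossible here. That argument is per-$t$, uses only the homology class, and needs no convergence statement; it also applies to the paper's further examples. Your soft argument buys independence from the volume computation but at the price of extra regularity input and a weaker (though still sufficient) conclusion.
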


This is one of examples which we obtain in this article.
Furthermore, we obtain 
\sL $2\mathbb{P}^2 \# 2\overline{\mathbb{P}^2} \# (S^1\times S^3)$ 
embedded in an $8$-dimensional \hK manifold and 
\sL $(3N+1)(\mathbb{P}^1)^2 \# N(S^1\times S^3)$ 
embedded in another $8$-dimensional \hK manifold, 
both of which never become \hL submanifolds 
with respect to any complex structure 
given by the \hK rotations.

Theorem \ref{main1} has another significance 
from the point of the view of the compactification 
of the moduli spaces of compact \sL submanifolds. 
In general, the moduli space $\mathcal{M}(L)$ of 
the deformations of compact \sL submanifolds $L\subset X$ 
is not necessarily to be compact, 
consequently the study of its compactification 
is important problem.
It is known that the compactification of 
$\mathcal{M}(L)$ is given by 
the geometric measure theory.
The \sL immersion $\bigcup_\alpha L_\alpha$ 
appeared in Theorem \ref{main1} is the concrete example 
of an element of $\overline{\mathcal{M}(\tilde{L}_{t_0})}\backslash\mathcal{M}(\tilde{L}_{t_0})$.
D. A. Lee also considered the similar situation, 
however the Calabi-Yau structures of ambient space of $\tilde{L}_t$ 
is deformed by the parameter $t$ in \cite{lee2004connected}.

Here, we describe the outline of the proof.
Let $(M,J,\omega,\Omega)$ be a K\"ahler manifold 
of complex dimension $m \ge 3$ with 
holomorphic volume form $\Omega\in H^0(K_M)$, 
and $L_{\alpha}\subset M$ be connected \sL submanifolds, 
where $\alpha = 1,\cdots,A$.
Put $\mathcal{V}=\{1,\cdots, A\}$, and suppose we have a 
quiver $(\mathcal{V},\mathcal{E},s,t)$, namely, 
$\mathcal{V}$ consists of finite vertices, 
$\mathcal{E}$ consists of finite directed edges, 
and $s,t$ are maps $\mathcal{E} \to \mathcal{V}$ so that 
$s(h)$ is the source of $h\in \mathcal{E}$ and $t(h)$ is the target.

A subset $S\subset \mathcal{E}$ is called a cycle 
if it is written as $S=\{ h_1,h_2,\cdots,h_l\}$ and $t(h_k) = s(h_{k+1})$, 
$t(h_l) = s(h_1)$ hold for all $k=1,\cdots,l-1$.
Then $\mathcal{E}$ is said to be {\it covered by cycles} if 
every edge $h\in \mathcal{E}$ is contained in some cycles of $\mathcal{E}$.

If there are two \sL submanifolds $L_0,L_1\subset X$ intersecting 
transversely at $p\in L_0\cap L_1$,
then we can define a type at the intersection point $p$, 
which is a positive integer less than $m$.
Then we have the next result, which follows 
from Theorem 9.7 of \cite{joyce2003special} 
by some additional arguments.

\begin{thm}
Let $(\mathcal{V},\mathcal{E},s,t)$ be a quiver, 
and $L_\alpha$ be connected compact \sL submanifolds 
embedded in a Calabi-Yau manifold $(M,J,\omega,\Omega)$ for 
every $\alpha\in \mathcal{V}$. 
Assume that $L_{s(h)}$ and $L_{t(h)}$ intersects transversely 
at only one point $p$ if $h \in \mathcal{E}$, 
and $p$ is the intersection point of type $1$, 
and $L_\alpha \cap L_\beta$ is empty if $\alpha\neq \beta$ and 
there are no edges connecting $\alpha$ and $\beta$.
Then, if $\mathcal{E}$ is covered by cycles, 
there exists a family of 
compact \sL submanifolds $\{ \tilde{L}_t\}_{0<t<\delta}$ embedded in 
$M$ which satisfies $\lim_{t\to 0}\tilde{L}_t = \bigcup_{\alpha\in \mathcal{V}}L_\alpha$ 
in the sense of current.
\label{gluing}
\end{thm}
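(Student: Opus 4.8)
The plan is to reduce the statement to Joyce's desingularization result, Theorem 9.7 of \cite{joyce2003special}, and then supply the extra bookkeeping needed to pass from the abstract gluing data to the geometric statement about currents. The starting point is the disjoint union $\bigsqcup_{\alpha\in\mathcal{V}} L_\alpha$, which we want to view as an immersed \sL submanifold with transverse self-intersections. Because each pairwise intersection $L_{s(h)}\cap L_{t(h)}$ is a single transverse point of type $1$, near each such point the pair of branches looks, up to the tangent-space model, like the union of a pair of \sL $m$-planes in $\C^m$ meeting transversely; the type-$1$ condition is exactly what guarantees the existence of the Lawlor-neck / Joyce handle, i.e. an asymptotically planar \sL $L^\infty$ in $\C^m$ smoothing the two planes. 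So first I would set up the immersion data: the finite index set of intersection points is $\mathcal{E}$, at each $h\in\mathcal{E}$ we record the two sheets $L_{s(h)}, L_{t(h)}$ and the type, and we check that the hypotheses of \cite[Thm.~9.7]{joyce2003special} (compactness, transversality, type $1$, the topological/analytic conditions on the neck) are met in our situation.

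Next comes the obstruction-theoretic heart, which is also where the hypothesis ``$\mathcal{E}$ is covered by cycles'' enters. Joyce's gluing for a collection of \sL submanifolds joined at transverse points does not always produce a smooth \sL desingularization: there is a finite-dimensional matching/obstruction space, and one must choose the scaling parameters of the individual necks so that a certain balancing condition holds. Concretely, attaching a neck at $h\in\mathcal{E}$ with parameter $t_h>0$ changes the ``phases'' or cohomological periods along the glued object; the solvability condition is that these contributions cancel around every loop. Assigning $t_h$ is therefore the same as assigning a positive weight to every edge, and the cancellation can be arranged precisely when every edge lies on a cycle — then one can propagate a consistent choice of $t_h = t^{c_h}$ (or $t_h=t$) around each cycle so that the telescoping sum vanishes. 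I would make this precise by writing the obstruction as a linear map from $\R^{\mathcal{E}}$ (the log-scaling parameters) to $\R^{\mathcal{V}}$ or to $H^1$ of the glued complex, identify its relevant component with the incidence/boundary map of the quiver, and observe that ``covered by cycles'' is exactly the condition that the relevant moment-type constraint has a solution with all $t_h>0$. Having fixed such a choice, Joyce's theorem then yields, for all sufficiently small $t\in(0,\delta)$, a compact embedded \sL submanifold $\tilde{L}_t$ obtained by gluing in the rescaled necks.

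Finally I would verify the two remaining assertions: embeddedness and current convergence. Embeddedness of $\tilde{L}_t$ for small $t$ follows because $\bigsqcup_\alpha L_\alpha$ has only the listed finitely many transverse self-intersections (the hypothesis $L_\alpha\cap L_\beta=\emptyset$ when $\alpha,\beta$ are not joined by an edge, together with ``only one point'' at each edge), each is resolved locally by an embedded neck, and the necks have diameter $O(t)$, hence are disjoint and do not reintroduce intersections for $t$ small. For the current convergence $\lim_{t\to 0}\tilde{L}_t=\bigcup_{\alpha\in\mathcal{V}} L_\alpha$, note that $\tilde{L}_t$ coincides with $\bigsqcup_\alpha L_\alpha$ outside $t$-balls around the intersection points, and inside those balls the glued piece has volume $O(t^m)\to 0$; since all pieces are calibrated (hence mass equals the integral of the calibration, which is continuous), $\tilde{L}_t\to\bigcup_\alpha L_\alpha$ as integral currents. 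The main obstacle, and the part requiring the ``additional arguments'' alluded to in the statement, is the middle step: correctly identifying the obstruction space appearing in \cite[Thm.~9.7]{joyce2003special} with the combinatorics of the quiver and showing that the cycle-covering condition is exactly what makes the neck parameters solvable with positive values, uniformly in $t$ — everything else is either quoted from Joyce or is a routine estimate on neck volumes.
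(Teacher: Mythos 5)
Your proposal follows essentially the same route as the paper: reduce to Joyce's Theorem 9.7, whose hypothesis is the existence of a strictly positive element of $\mathrm{Ker}(\partial)$ for the incidence map $\partial:\R^{\mathcal{E}}\to\R^{\mathcal{V}}$, and then observe that covering $\mathcal{E}$ by cycles supplies such an element (the paper's Lemma 5.1, whose relevant direction is exactly your "sum the characteristic functions of the covering cycles"). The remaining points you raise (embeddedness, current convergence, neck volumes $O(t^m)$) are likewise absorbed into Joyce's theorem in the paper, so there is no substantive difference.
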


To obtain Theorem \ref{main1},
we apply Theorem \ref{gluing} to the case that 
$M$ is a toric \hK manifold 
and $L_\alpha$ is a \hL submanifold with respect to $\cos(\alpha\pi/n) I_2 + \sin(\alpha\pi/n) I_3$.
Accordingly, the proof is reduced to 
looking for toric \hK manifolds $M$ and their \hL submanifolds $L_1,\cdots,L_{2n}$
satisfying the assumption of Theorem \ref{gluing}. 
In particular, to find $L_\alpha$'s so that 
$\mathcal{E}$ is covered by cycles  
is not so easy. 
The author cannot develop the systematic way to find 
such examples in toric \hK manifolds, 
however, we can raise some concrete examples in this article.

In toric \hK manifolds, many \hL submanifolds 
are obtained as the inverse image of 
some special polytopes by the \hK moment maps, 
where the polytopes are naturally given by the 
hyperplane arrangements which determine 
the toric \hK manifolds. 
We can compute the type at the intersection point 
of two \hL submanifolds, if the intersection point 
is the fixed point of the torus action.
Finally, we can find examples of 
toric \hK manifolds and such polytopes, 
which satisfy the assumption Theorem \ref{gluing}.

Next we have to show that these examples of 
\sL submanifolds never become \hL submanifolds.
Since $\tilde{L}_t$ is contained in the homology 
class $\sum_{\alpha}(-1)^\alpha [L_{\alpha}]$, 
we obtain the volume of $\tilde{L}_t$ by integrating 
the real part of the holomorphic volume form 
over $\sum_{\alpha}(-1)^\alpha [L_{\alpha}]$.
On the other hand, if $\tilde{L}_t$ is \hL submanifold 
with respect to some $aI_1+bI_2+cI_3$, 
then the volume can be also computed by integrating 
$(a\omega_1+b\omega_2+c\omega_3)^n$ over 
$\sum_{\alpha}(-1)^\alpha [L_{\alpha}]$, 
since $a\omega_1+b\omega_2+c\omega_3$ should be the 
K\"ahler form on $\tilde{L}_t$.
These two values of the volume do not coincide, 
we have a contradiction.
At the same time, we have another simpler proof 
if the first betti number $\tilde{L}_t$ is odd, 
since any \hL submanifolds become K\"ahler manifolds 
which always have even first betti number.
The example constructed in Theorem \ref{main1} 
satisfies $b_1 = 1$, hence we can use this proof.
However, we have other examples 
in Section \ref{sec6} whose first betti number may be even.

This article is organized as follows.
First of all we define $\sigma$-\hL submanifolds in Section \ref{sec2} 
and review the constructions of them in 
toric \hK manifolds in Section \ref{sec3}.
Next we review the definition of the type at the intersection point 
of two \sL submanifolds, and then compute them 
in the case of toric \hK manifolds in Section \ref{sec4}.
In Section \ref{sec5}, we prove Theorem \ref{gluing} by 
using Theorem 9.7 of \cite{joyce2003special}.
In Section \ref{sec6}, we find toric \hK manifolds 
and their \hL submanifolds which satisfy the assumption of 
Theorem \ref{gluing}, and obtain compact \sL submanifolds 
embedded in some toric \hK manifolds.
In Section \ref{sec7}, we show the examples obtained in 
Section \ref{sec6} never become $\sigma$-\hL submanifolds 
for any $\sigma\in S^2$.

{\bf Acknowledgment.} The author would like to express his gratitude 
to Professor Dominic Joyce for his advice on this article. 
The author is also grateful to Dr. Yohsuke Imagi for useful discussion
and his advice.

\section{Holomorphic Lagrangian submanifolds}\label{sec2}
\begin{definition}
{\rm A Riemannian manifold $(M,g)$ equipped with integrable complex structures $(I_1,I_2,I_3)$ is a}
\hK manifold {\rm if each $I_{\alpha}$ is orthogonal with respect to $g$, they satisfy the quaternionic 
relation $I_1I_2I_3 = -1$ and fundamental $2$-forms $\omega_{\alpha}:=g(I_{\alpha}\cdot,\cdot)$ are closed.}
\end{definition}
We put $\omega=(\omega_1,\omega_2,\omega_3)$ and call it the \hK structure. 
For each 
\begin{eqnarray}
\sigma = (\sigma_1,\sigma_2,\sigma_3) \in S^2=\{ (a,b,c)\in \R^3;\ a^2+b^2+c^2=1 \},\nonumber
\end{eqnarray}
we have another K\"ahler structure 
\begin{eqnarray}
(M, I^\sigma, \omega^\sigma) := (M, \sum_{i=1}^3 I_i\omega_i, \sum_{i=1}^3\sigma_i\omega_i). \nonumber
\end{eqnarray}
Take $\sigma',\sigma''\in S^2$ so that $(\sigma,\sigma',\sigma'')$ forms an orthonormal 
basis in $\R^3$. 
Suppose it has the positive orientation, that is, 
\begin{eqnarray}
\sigma \wedge \sigma' \wedge \sigma'' = 
(1,0,0) \wedge (0,1,0) \wedge (0,0,1)\nonumber
\end{eqnarray}
holds.
Then we have another \hK structure $(\omega^\sigma, \omega^{\sigma'}, \omega^{\sigma''})$ 
which is called the \hK rotation of $\omega$.
\begin{definition}
{\rm Let $(M,g,I_1,I_2,I_3)$ be a \hK manifold of real dimension $4n$, and $L\subset M$ be a 
$2n$-dimensional submanifold. 
Fix $\sigma\in S^2$ arbitrarily. 
Then $L$ is a $\sigma$}-\hL submanifold {\rm if $\omega^{\sigma'}|_{L} = \omega^{\sigma''}|_{L} = 0$.}
\end{definition}
It is easy to see that the above definition is not depend on the choice of $\sigma',\sigma''$.

Any \hK manifolds can be regarded as Calabi-Yau manifolds by considering the pair of a K\"ahler manifold
$(M,I_1,\omega_1)$ and a holomorphic volume form $(\omega_2+\sqrt{-1}\omega_3)^n \in H^0(M,K_M)$, where 
$K_M$ is the canonical line bundle of the complex manifold $(M,I_1)$.
Therefore, we can consider the notion of \sL submanifolds in $M$ as follows.
\begin{definition}
{\rm Let $(M,g,I_1,I_2,I_3)$ be a \hK manifold of real dimension $4n$, and $L\subset M$ be a 
$2n$-dimensional submanifold. 
Then $L$ is a} \sL submanifold {\rm if $\omega_1|_{L} = {\rm Im}(\omega_2+\sqrt{-1}\omega_3)^n|_L = 0$.}
\end{definition}

\section{Toric \hK manifolds}\label{torichk}\label{sec3}
\subsection{Construction}\label{const}
In this subsection we review the construction of toric \hK manifolds shortly.
Let $u_{\mathbb{Z}} : \mathbb{Z}^d \to \mathbb{Z}^n$ be a surjective $\mathbb{Z}$ linear map which induces a 
homomorphisms between tori and their Lie algebras, denoted by 
$\hat{u}: T^d \to T^n$ and $u : \mathbf{t}^d \to \mathbf{t}^n$, respectively.
We put $K:={\rm Ker}\ \hat{u}\in T^d$ and $\mathbf{k}:= {\rm Ker}\ u\in \mathbf{t}^d$, 
where $\mathbf{k}:$ is the Lie algebra of the subtorus $K$. 
The adjoint map of $u$ is denoted by $u^* : (\mathbf{t}^n)^*\to (\mathbf{t}^d)^*:$ and it induces 
$u^*:V\otimes (\mathbf{t}^n)^*\to V\otimes (\mathbf{t}^d)^*$ naturally for any vector space $V$, 
which is also denoted by the same symbol.

Next we consider the action of $T^d$ on the quaternionic vector space $\quater^d$ 
given by $(x_1,\cdots,x_d)\cdot (g_1,\cdots, g_d) := (x_1g_1,\cdots,x_dg_d)$
for $x_k\in \quater$ and $g_k\in S^1$.
Then this action preserves the standard \hK structure on $\quater^d$, and the \hK moment map 
$\mu_d:\quater^d\to \ImH \otimes (t^d)^*$ is given by
$\mu_d(x_1,\cdots,x_d) = (x_1i\overline{x_1},\cdots,x_di\overline{x_d})$. 
Here, $\ImH \cong \R^3$ is the pure imaginary part of $\quater$.

Let $\hat{\iota} : K\to T^d$ and $\iota : \mathbf{k}\to \mathbf{t}^d$ be the inclusion maps and put 
$\mu_K:=\iota^*\circ\mu_d : \quater^d\to \ImH\otimes\mathbf{k}^*$ be the \hK moment map 
with respect to $K$-action on $\quater^d$. For each $\lambda \in \ImH\otimes \mathbf{k}^*$, 
we obtain the \hK quotient $X(u,\lambda):= \mu_K^{-1}(\iota^*(\lambda))/K$ for every 
$\lambda=(\lambda_1,\cdots,\lambda_d)\in \ImH\otimes (\mathbf{t}^d)^*$, called toric \hK varieties.
The complex structures on $X(u,\lambda)$ are denoted by 
$I_{\lambda,1}, I_{\lambda,2},I_{\lambda,3}$, and the 
corresponding K\"ahler forms are denoted by 
$\omega_{\lambda}=(\omega_{\lambda,1}, \omega_{\lambda,2},\omega_{\lambda,3})$.

Although $X(u,\lambda)$ is not necessarily to be a smooth manifold, 
the equivalent condition for the smoothness was obtained 
by Bielawski-Dancer in \cite{bielawski2000geometry}. 
Let $e_1,\cdots,e_d\in \R^d$ be the standard basis and $u_k:=u(e_k) \in \mathbf{t}^n$.
Put 
\begin{eqnarray}
H_k = H_k(\lambda) := \{ y\in \ImH\otimes (\mathbf{t}^n)^* ;\ \langle y, u_k \rangle + \lambda_k = 0 \},\nonumber
\end{eqnarray}
where 
\begin{eqnarray}
\langle y, u_k \rangle = (\langle y_1, u_k \rangle,\ \langle y_2, u_k \rangle,\ \langle y_3, u_k \rangle) \in \R^3 = \ImH \nonumber
\end{eqnarray}
for $y = (y_1,y_2,y_3)$.
\begin{thm}[\cite{bielawski2000geometry}]
The \hK quotient $X(u,\lambda)$ is a smooth manifold if and only if 
both of the following conditions $(*1)(*2)$ are satisfied.
$(*1)$ For any $\tau \subset \{1,2,\cdots, d \}$ with $\#\tau = n + 1$, 
the intersection $\bigcap_{k\in \tau} H_k$ is empty.
$(*2)$ For every $\tau \subset \{1,2,\cdots, d \}$ with $\#\tau = n$, 
the intersection $\bigcap_{k\in \tau} H_k$ is nonempty if and only if 
$\{u_k;\ k\in\tau \}$ is a $\Z$-basis of $\Z^n$.
\label{smooth}
\end{thm}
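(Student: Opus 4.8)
The plan is to derive Theorem~\ref{smooth} from the general formalism of \hK quotients, by first reducing the smoothness of $X(u,\lambda)$ to the freeness of the $K$-action on the level set $\mu_K^{-1}(\iota^*\lambda)$ and then translating that freeness into the stated combinatorics of the hyperplanes $H_k$. For the reduction I would use that, because $K$ is a torus acting linearly on the flat quaternionic space $\quater^d$, the quotient $X(u,\lambda)$ is a smooth manifold exactly when $K$ acts freely on $\mu_K^{-1}(\iota^*\lambda)$: once the stabilizers are trivial, at every point of the level set the fundamental vector fields of the $K$-action together with their images under $I_1,I_2,I_3$ span a subspace of dimension $4\dim K$, so the level set is a smooth submanifold of the expected codimension and the quotient by the free $K$-action is smooth. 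This is the input we take from \cite{bielawski2000geometry}; no further analytic work is needed.

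Next I would make the stabilizers explicit. For $x=(x_1,\dots,x_d)$ put $Z(x)=\{k : x_k=0\}$; since $T^d$ acts coordinatewise and $x_kg_k=x_k$ forces $g_k=1$ whenever $x_k\neq0$, the stabilizer of $x$ in $T^d$ is the coordinate subtorus $T^{Z(x)}$ of elements supported on $Z(x)$, so ${\rm Stab}_K(x)=K\cap T^{Z(x)}$. The restriction of $\hat{u}$ to $T^{Z(x)}$ is the torus homomorphism induced by the $\Z$-linear map $e_k\mapsto u_k$ $(k\in Z(x))$, and a standard lattice argument shows that $K\cap T^{Z(x)}={\rm Ker}(\hat{u}|_{T^{Z(x)}})$ is trivial if and only if $\{u_k : k\in Z(x)\}$ extends to a $\Z$-basis of $\Z^n$ (equivalently, these vectors are $\Z$-linearly independent and span a saturated sublattice). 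It then remains to identify which subsets arise as $Z(x)$. Since $x\mapsto xi\overline{x}$ maps $\quater$ onto $\ImH$ and vanishes only at $0$, and since dualizing the exact sequence $0\to\mathbf{k}\xrightarrow{\iota}\mathbf{t}^d\xrightarrow{u}\mathbf{t}^n\to0$ gives ${\rm Im}\, u^*={\rm Ker}\, \iota^*$ with $u^*$ injective, each $x$ in the level set determines a unique $y\in\ImH\otimes(\mathbf{t}^n)^*$ with $\mu_d(x)=\lambda+u^*(y)$, whence $x_k=0\iff\langle y,u_k\rangle+\lambda_k=0\iff y\in H_k$; conversely every $y$ is realized by some $x$ in the level set. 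Hence $K$ acts freely on $\mu_K^{-1}(\iota^*\lambda)$ if and only if condition $(\star)$ holds: for every $\tau\subseteq\{1,\dots,d\}$ with $\bigcap_{k\in\tau}H_k\neq\emptyset$, the set $\{u_k : k\in\tau\}$ extends to a $\Z$-basis of $\Z^n$ (using here that any subset of a set which extends to a $\Z$-basis again extends to one).

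It then remains to prove $(\star)\iff(*1)\wedge(*2)$, which is pure linear algebra over $\Z$. The implication $(\star)\Rightarrow(*1)\wedge(*2)$ is easy: if $\#\tau=n+1$ and $\bigcap_{k\in\tau}H_k\neq\emptyset$ then the vectors $u_k$ $(k\in\tau)$ would be part of a $\Z$-basis of $\Z^n$, which is impossible, so $(*1)$ holds; and if $\#\tau=n$ then "$\{u_k : k\in\tau\}$ extends to a $\Z$-basis" means "$\{u_k : k\in\tau\}$ is a $\Z$-basis", while the reverse implication in $(*2)$ is the elementary fact that, when $\{u_k : k\in\tau\}$ is a $\Z$-basis, each of the three real systems obtained by taking one $\ImH$-component of the equations $\langle y,u_k\rangle+\lambda_k=0$ $(k\in\tau)$ has a unique solution, so $\bigcap_{k\in\tau}H_k$ is a single point. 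For the converse $(*1)\wedge(*2)\Rightarrow(\star)$: given $\tau$ with $y\in\bigcap_{k\in\tau}H_k$, condition $(*1)$ forces $\#\tau\le n$; I would then enlarge $\tau$ one index at a time, at each stage choosing $k^*\notin\tau$ with $u_{k^*}$ not in the current $\R$-span of the $u_k$ (possible since $u_1,\dots,u_d$ span $\R^n$) and noting that adjoining $k^*$ keeps $\bigcap_k H_k$ nonempty, because on the direction subspace of the current affine intersection the linear functional $w\mapsto\langle w,u_{k^*}\rangle$ is nonzero, hence surjective, so the new affine equations are solvable. Were $\{u_k : k\in\tau\}$ $\R$-linearly dependent, this process would eventually produce a subset of size $\ge n+1$ with nonempty intersection of hyperplanes, contradicting $(*1)$; hence $\{u_k : k\in\tau\}$ is $\R$-linearly independent and the enlargement stops at a set $\sigma\supseteq\tau$ of size exactly $n$ with $\bigcap_{k\in\sigma}H_k\neq\emptyset$. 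Then $(*2)$ forces $\{u_k : k\in\sigma\}$ to be a $\Z$-basis of $\Z^n$, so $\{u_k : k\in\tau\}$ extends to a $\Z$-basis and $(\star)$ holds.

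I expect the converse implication $(*1)\wedge(*2)\Rightarrow(\star)$ to be the main obstacle: one must see that the pointwise freeness at every point of the level set encoded by $(\star)$ is already forced by the two "extreme" conditions on $(n+1)$- and $n$-element subsets, and the bridge between them is precisely the enlargement argument above, together with the observation that $(*1)$ excludes $\R$-linear dependence among any $\{u_k : k\in\tau\}$ with $\bigcap_{k\in\tau}H_k\neq\emptyset$. By contrast, the reduction of smoothness to freeness in the first step is standard, and is where the geometric input of \cite{bielawski2000geometry} enters.
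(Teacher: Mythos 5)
The paper offers no proof of this theorem: it is quoted directly from Bielawski--Dancer \cite{bielawski2000geometry}, so there is no in-paper argument to compare yours against. Your reconstruction is the standard proof and the parts you actually argue are sound: the identification of the stabilizer of $x$ with $K\cap T^{Z(x)}$, the lattice criterion that $K\cap T^{Z(x)}$ is trivial iff $\{u_k: k\in Z(x)\}$ extends to a $\Z$-basis (equivalently, the $u_k$ are independent and span a saturated sublattice), the correspondence $x_k=0\iff y\in H_k$ obtained by writing $\mu_d(x)-\lambda=u^*(y)$ with $u^*$ injective and ${\rm Im}\,u^*={\rm Ker}\,\iota^*$, and the combinatorial equivalence $(\star)\iff(*1)\wedge(*2)$. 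In the last step, your enlargement argument is correct in both of its delicate points: $(*1)$ applied to $(n+1)$-element subsets of a larger $\tau$ bounds $\#\tau\le n$, and it also rules out $\R$-linear dependence of $\{u_k:k\in\tau\}$, since a dependent set would enlarge past size $n$ while keeping $\bigcap_k H_k$ nonempty (the new affine equation is solvable because the functional $w\mapsto\langle w,u_{k^*}\rangle$ is nonzero on the direction space of the current intersection). You also correctly note that the ``if'' half of $(*2)$ is unconditional. The one place where you lean on the citation rather than on an argument is the reduction of smoothness of $X(u,\lambda)$ to freeness of $K$ on $\mu_K^{-1}(\iota^*(\lambda))$. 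The direction ``free $\Rightarrow$ smooth'' does follow from your observation (triviality of $\mathbf{k}\cap\mathbf{t}^{Z(x)}$ forces the quaternionic span of the orbit directions to have dimension $4\dim K$, hence $d\mu_K$ is surjective along the level set), but the converse --- that a nontrivial finite stabilizer produces a genuine singularity of the form $\quater^l/\Gamma$ rather than an accidentally smooth quotient --- genuinely requires the geometric input from \cite{bielawski2000geometry} that you flag. Since the theorem is itself attributed to that paper, drawing the line there is reasonable, but you should state explicitly that this is the precise statement being imported.
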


The $T^d$ action on $\quater^d$ induces a $T^n=T^d/K$ action on $X(u,\lambda)$ preserving 
the \hK structure of $X(u,\lambda)$, and the \hK moment map 
$\mu_{\lambda} = (\mu_{\lambda,1},\ \mu_{\lambda,2},\ \mu_{\lambda,3}): X(u,\lambda)\to \ImH\otimes (\mathbf{t}^n)^*$ is defined by 
\begin{eqnarray}
u^*(\mu_{\lambda}([x])):= \mu_d(x) - \lambda,\nonumber
\end{eqnarray}
where $[x]\in X(u,\lambda)$ is the equivalence class represented by $x\in \mu_K^{-1}(\iota^*(\lambda))$.

Let $\sigma\in S^2$. 
A $T^n$-invariant submanifold $L\subset X(u,\lambda)$ becomes a $\sigma$-\hL submanifold 
if $\mu_\lambda(L)$ is contained in $q+\sigma\otimes (\mathbf{t}^n)^*$ for some $q\in \ImH\otimes (\mathbf{t}^n)^*$.

\subsection{Local model of the neighborhood of a fixed point}\label{localmodel}
Let $X=X(u,\lambda)$ be a smooth toric \hK manifold of real dimension $4n$, 
$\omega=\omega_{\lambda}$ and $\mu=\mu_{\lambda}$. 
Denote by $X^*$ the maximal subset of $X$ on whom 
$T^n$ acts freely.
Let $p\in X$ be a fixed point of the $T^n$-action.
Then we can see that 
\begin{eqnarray}
H_{k_1}\cap H_{k_2}\cap \cdots\cap H_{k_n} = \{ \mu(p) \}\nonumber
\end{eqnarray}
for some $k_1,\cdots, k_n$, and we may suppose 
$k_i=i$ without loss of generality. 

By the result of \cite{pedersen1988hyper}, the \hK metric on $X^*$ can be described 
using $\mu$ and $T^n$-connection on $X^*$ and some functions defined on 
$\mu(X^*)$. 
Using their result, we can see that $\omega$ can be decomposed into two parts as
\begin{eqnarray}
\omega = \omega_{\quater^n} + \mu^*\eta \nonumber
\end{eqnarray}
on $U$, where $U$ is a $T^n$-invariant neighborhood of 
$p$, $\omega_{\quater^n}$ has the same form with 
the standard \hK structure on $\quater^n$, 
and $\eta\in \Omega^2(\mu(U))$.
Hence we have the followings.
\begin{prop}
Let $(X,\omega,\mu)$ and $p$ be as above. 
There is a $T^n$-invariant neighborhood $U\subset X$ of $p$, 
a $T^n$-equivariant diffeomorphism $F:U \to B_{\quater^n}(\varepsilon)$ and 
$\eta\in \Omega^2(\mu(U))$ which satisfy
\begin{eqnarray}
\omega|_U &=& F^*\omega_{\quater^n}|_U+ \mu^*\eta, \nonumber\\
\mu_n|_{B_{\quater^n}(\varepsilon)} \circ F &=& \mu|_U - \mu(p) \nonumber
\end{eqnarray}
where $B_{\quater^n}(\varepsilon)=\{ x\in \quater^n;\ \| x\| < \varepsilon\}$, 
and $\omega_{\quater^n}$ is the standard \hK structure on $\quater^n$.
\label{3.2}
\end{prop}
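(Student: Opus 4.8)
The plan is to make precise the decomposition $\omega=\omega_{\quater^n}+\mu^*\eta$ indicated just before the statement, by feeding the explicit potential of a toric \hK manifold into the Pedersen--Poon description of $T^n$-invariant \hK metrics. First I would normalise. By condition $(*2)$ of Theorem \ref{smooth}, after relabelling so that $H_1\cap\cdots\cap H_n=\{\mu(p)\}$ the vectors $u_1,\ldots,u_n$ form a $\Z$-basis of $\Z^n$; applying the induced automorphism of $\mathbf{t}^n\cong\R^n$ I may assume $u_1,\ldots,u_n$ is the standard basis, and translating in $\ImH\otimes(\mathbf{t}^n)^*$ I may assume $\mu(p)=0$, so that $\lambda_k=0$ for $k\le n$. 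By condition $(*1)$ we also get $0\notin H_k$ for every $k>n$, since otherwise $H_1\cap\cdots\cap H_n\cap H_k$ would be a nonempty intersection of $n+1$ of the hyperplanes.

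Over the free locus $X^*$ the Pedersen--Poon construction \cite{pedersen1988hyper} presents $\omega$ in terms of the moment map $\mu$, a principal $T^n$-connection $\theta=(\theta_i)$ on $X^*\to\mu(X^*)$, and a positive-definite symmetric matrix-valued function $\Phi=(\Phi_{ij})$ on $\mu(X^*)$ whose differential governs the curvature of $\theta$; and by \cite{bielawski2000geometry} this potential is, in a suitable normalisation, the explicit sum
\begin{eqnarray}
\Phi_{ij}(y) = \Phi^{0}_{ij}+\sum_{k=1}^{d}\frac{(u_k)_i(u_k)_j}{2\,\|\langle y,u_k\rangle+\lambda_k\|},\nonumber
\end{eqnarray}
with $\Phi^{0}$ a constant symmetric matrix. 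I would split $\Phi=\Phi^{\rm sing}+\Phi^{\rm reg}$, where $\Phi^{\rm sing}$ gathers the $n$ terms with $k\le n$ and $\Phi^{\rm reg}$ collects $\Phi^{0}$ together with the terms with $k>n$. By the normalisation $\Phi^{\rm sing}$ is the diagonal matrix whose $(i,i)$ entry is $1/(2\|\langle y,u_i\rangle\|)$, which is precisely the Gibbons--Hawking potential of flat $\quater^n=\prod_{i=1}^{n}\quater$, while $\Phi^{\rm reg}$ is real-analytic near $0$ because $0\notin H_k$ for $k>n$. On a contractible $T^n$-invariant neighbourhood $U$ of $p$ the curvature of $\theta$ splits accordingly, and since the part coming from $\Phi^{\rm reg}$ is pulled back from the base and the base is contractible, I can write $\theta_i=\theta^{\rm sing}_i+\mu^*\alpha_i$ with $\theta^{\rm sing}$ the standard connection of flat $\quater^n$ and the $\alpha_i$ one-forms on $\mu(U)$. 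Substituting into the Pedersen--Poon expression for $\omega$, the terms built only from $\Phi^{\rm sing}$ and $\theta^{\rm sing}$ assemble into $F^*\omega_{\quater^n}$ for the map $F$ that is the identity in Gibbons--Hawking coordinates (so that $\mu_n\circ F=\mu|_U-\mu(p)$ is automatic), while every remaining term involves only $\Phi^{\rm reg}$, the $\alpha_i$ and $d\mu$, hence equals $\mu^*\eta$ for a suitable $\eta\in\Omega^2(\mu(U))$.

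The point still to be settled, and the one I expect to be the main obstacle, is that the Pedersen--Poon description holds only over $X^*$, so one must verify that $F$ extends to a genuine smooth $T^n$-equivariant diffeomorphism of a full $T^n$-invariant neighbourhood $U$ of $p$ onto a ball $B_{\quater^n}(\varepsilon)$, across the lower-dimensional orbit strata where Gibbons--Hawking coordinates degenerate. Here condition $(*2)$ re-enters: because $u_1,\ldots,u_n$ is a $\Z$-basis, $p$ is an isolated fixed point whose $T^n$-isotropy representation --- like that at $0\in\quater^n$ --- is the standard representation of $T^n$ on $\quater^n$, so the equivariant slice theorem furnishes a smooth $T^n$-equivariant diffeomorphism $U\to B_{\quater^n}(\varepsilon)$ which on the dense free locus can be normalised (matching the moment maps and performing a gauge change on the fibre angles) to coincide with the $F$ of the previous paragraph. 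Then the identities $\omega|_U=F^*\omega_{\quater^n}|_U+\mu^*\eta$ and $\mu_n\circ F=\mu|_U-\mu(p)$ hold on the dense open set $X^*\cap U$, hence on all of $U$ by continuity; shrinking $\varepsilon$ so that $B_{\quater^n}(\varepsilon)=F(U)$ meets none of the sets $\mu^{-1}(H_k)$ with $k>n$ keeps $\eta$ smooth on $\mu(U)$ and completes the argument.
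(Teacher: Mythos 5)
Your argument is correct and follows essentially the same route as the paper, which proves this proposition only by the sketch preceding it: invoking the Pedersen--Poon description of the metric on $X^*$ via $\mu$, a $T^n$-connection and a potential on $\mu(X^*)$, splitting off the flat Gibbons--Hawking part coming from the $n$ hyperplanes through $\mu(p)$ so that $\omega=\omega_{\quater^n}+\mu^*\eta$ on a neighborhood of $p$. Your write-up simply supplies the details (the normalisation via $(*1)(*2)$, the splitting of the Bielawski--Dancer potential and of the connection, and the extension across the non-free strata) that the paper leaves implicit.
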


\section{Characterizing angles}\label{sec4}
\subsection{Calabi-Yau case}
For the desingularization of special Lagrangian immersions which intersect transversely on a point, 
one should consider the characterizing angles, introduced by Lawlor \cite{lawlor1989angle}.

Let $(M,J,\omega)$ be a K\"ahler manifold, where $J$ is a complex structure,
$\omega$ is a K\"ahler form.
Suppose that there is a Lagrangian immersion $\iota : L \to M$, where 
$\iota$ is embedding on $L\backslash \{p_+,p_-\}$ and $\iota(L)$ intersects at 
$\iota(p_+) = \iota(p_-) = p \in M$ transversely.
We suppose $L$ is not necessarily to be connected, 
and the orientation of $L$ is fixed.
\begin{thm}[\cite{joyce2003special}\cite{joyce2004special}]
Let $(J_0,\omega_0)$ be the standard K\"ahler structure
on $\C^m$.
There exists a linear map $v:T_pM\to \C^m$ satisfying the following conditions;
(i) $v$ is a $\C$-linear isomorphism preserving the K\"ahler forms, 
(ii) there is $\varphi =(\varphi_1, \cdots, \varphi_m)\in \R^m$ 
which satisfies 
$0<\varphi_1\le \cdots\le \varphi_m<\pi$ and
\begin{eqnarray}
v\circ\iota_* (T_{x_+} L) &=& \R^m = \{ (t_1,\cdots, t_m)\in \C^m;\  t_i\in\R \},\nonumber\\
v\circ\iota_* (T_{x_-} L) &=& \R^m_{\varphi}=\{ (t_1 e^{\sqrt{-1}\varphi_1},\cdots, t_m e^{\sqrt{-1}\varphi_m})\in \C^m;\ t_i\in\R \}.\nonumber
\end{eqnarray}
(iii) $v$ maps the orientation of $\iota_* (T_{x_+} L)$ to 
the standard orientation of $\R^m$.
Moreover, $\varphi_1, \cdots, \varphi_m$ and the induced orientation 
of $\R^m_{\varphi}$ by $v,\iota_* (T_{x_-} L)$ 
do not depend on the choice of $v$.
\label{joyce}
\end{thm}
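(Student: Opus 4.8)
The plan is to prove Theorem~\ref{joyce} by reducing the statement to linear algebra on the tangent space $T_pM$ and then invoking the normal form for a pair of Lagrangian planes in a Hermitian vector space. First I would fix an arbitrary $\C$-linear isometry $w : (T_pM, J, \omega) \to (\C^m, J_0, \omega_0)$, which exists because $T_pM$ with the restricted K\"ahler data is a Hermitian vector space of complex dimension $m$; this already gives a $v_0 := w$ satisfying condition (i). Set $P_+ := w\circ\iota_*(T_{x_+}L)$ and $P_- := w\circ\iota_*(T_{x_-}L)$, two Lagrangian subspaces of $\C^m$ which, by the transversality hypothesis $\iota(p_+)=\iota(p_-)=p$ with transverse intersection, satisfy $P_+\cap P_- = \{0\}$. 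The task is then to find a further unitary transformation $g\in U(m)$ such that $g(P_+) = \R^m$ and $g(P_-) = \R^m_\varphi$ with the angles $\varphi_i$ in the prescribed range, and to arrange the orientation condition (iii) by composing with an element of $O(m)\subset U(m)$ of determinant $\pm 1$ if needed.

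The key step is the simultaneous normal form for two transverse Lagrangian planes. After applying some $g_1\in U(m)$ we may assume $P_+ = \R^m$; the residual freedom preserving $\R^m$ is exactly $O(m)$, acting on $\R^m$ in the standard way and on $\C^m = \R^m\otimes_\R\C$ by complexification. A transverse Lagrangian plane $P_-$ complementary to $\R^m$ is the graph of a linear map, and the Lagrangian condition forces it to be of the form $\{(t_1 e^{\sqrt{-1}\varphi_1},\dots, t_m e^{\sqrt{-1}\varphi_m}) : t_i\in\R\}$ after diagonalizing the relevant symmetric operator by an element of $O(m)$; this is the classical fact that the space of Lagrangians transverse to a fixed one is parametrized by symmetric matrices (the Cayley-type chart on $U(m)/O(m)$), and the $O(m)$-conjugacy classes are given by the unordered eigenvalue-angles. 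Ordering them as $0<\varphi_1\le\cdots\le\varphi_m<\pi$ and noting transversality forbids any $\varphi_i\in\{0,\pi\}$ produces conditions (ii). I would then check (iii) by observing that among the two choices of $g\in O(m)$ differing by an orientation-reversing reflection that fixes the ordered angle data, exactly one sends the given orientation of $\iota_*(T_{x_+}L)$ to the standard orientation of $\R^m$; the remaining claim that the $\varphi_i$ and the induced orientation of $\R^m_\varphi$ are independent of $v$ follows because any two valid choices of $v$ differ by an element of $O(m)$ fixing $\R^m$ as an oriented subspace, hence fixing the ordered angles and the orientation transported to $\R^m_\varphi$.

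The main obstacle I expect is bookkeeping around the orientation statement and the well-definedness of the angles: one must be careful that the diagonalizing orthogonal transformation is only determined up to permutations and sign changes of coordinates, and must verify that imposing the nondecreasing order $\varphi_1\le\cdots\le\varphi_m$ together with condition (iii) pins down the data modulo the stabilizer that acts trivially on everything claimed to be canonical (the angles as an ordered tuple, and the orientation of $\R^m_\varphi$). A subtlety is that repeated angles $\varphi_i = \varphi_{i+1}$ leave an $O(k)$ worth of ambiguity in the corresponding eigenspace, but since such transformations act on $\R^m_\varphi$ through that same block they do not change its orientation, so the claim survives. Since this is essentially the content of the relevant parts of \cite{joyce2003special} and \cite{joyce2004special}, I would cite those for the detailed verification and present here only the reduction to the Lagrangian-pair normal form together with the orientation analysis.
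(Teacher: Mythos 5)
Your proposal is correct and arrives at the same normal form, but the key linear-algebra step is packaged differently from the paper's. The paper writes $V_\pm=g_\pm^{-1}\cdot\R^m$ with $g_\pm\in U(m)$, forms $P=g_+g_-^{-1}$, and diagonalizes the unitary symmetric matrix ${}^tP P$ by an orthogonal matrix $Q$ (a Takagi-type factorization), reading off $\varphi_i=\theta_i/2$ from the eigenvalues $e^{\sqrt{-1}\theta_i}$ and building the correcting orthogonal matrices $h_\pm$ explicitly from $P$, $Q$ and the half-angle diagonal matrix, with the determinant of $Q$ adjusted at the end to enforce condition (iii). You instead normalize $P_+=\R^m$ first and invoke the Cayley-type chart on the Lagrangian Grassmannian: a Lagrangian transverse to $\R^m$ is the graph of a symmetric operator, which you diagonalize by $O(m)$ to obtain $\R^m_\varphi$, with transversality excluding $\varphi_i\in\{0,\pi\}$. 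The two routes are equivalent (the symmetric operator in your chart and ${}^tPP$ carry the same $O(m)$-conjugacy data); yours is somewhat more geometric and avoids the explicit matrix algebra, at the cost of having to verify that the graph plus the isotropy condition forces symmetry of the operator. Your uniqueness argument --- the residual stabilizer is $SO(m)$, necessarily block-diagonal with respect to the angle eigenspaces and hence acting with the same determinant on $\R^m$ and on $\R^m_\varphi$ --- is the same as the paper's observation that $G={\rm diag}(e^{-\sqrt{-1}\hat\varphi_i})\,\hat g\,{\rm diag}(e^{\sqrt{-1}\varphi_i})$ is real with ${\rm det}(G)={\rm det}(\hat g)=1$, just phrased blockwise.
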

\begin{proof}
Choose a $\C$-linear map $v_0:T_pM\to \C^m$ which preserves the K\"ahler metrics 
and the orientations of $\iota_* (T_{x_+} L)$ and $\R^m$.
Then $V_{\pm}:=v_0\circ\iota_* (T_{x_{\pm}} L)$ are Lagrangian subspaces of $\C^m$. 
It is well-known that any Lagrangian subspaces in $\C^m$ are written as $g\cdot\R^m$ for some 
$g\in U(m)$, where $\R^m\subset \C^m$ is the standard real form of $\C^m$.
Therefore there are $g_{\pm}\in U(m)$ so that $g_+\cdot V_+ = g_-\cdot V_- = \R^m$.
We may chose $g_+$ so that it preserves the orientations of $V_+$ and $\R^m$.
Once we fined such $g_\pm$, we can replace them by $h_\pm g_\pm$ 
for any $h_+\in SO(m)$ and $h_-\in O(m)$ respectively. 
Now, let $v:= h_+ g_+ v_0$. 
Then we have $v\circ\iota_* (T_{x_+} L) = \R^m$ and 
\begin{eqnarray}
v\circ\iota_* (T_{x_-} L) &=& h_+ g_+ V_- = (h_+ g_+g_-^{-1}h_-^{-1})h_-g_- V_- \nonumber\\
&=& (h_+ g_+g_-^{-1}h_-^{-1}) \R^m. \nonumber
\end{eqnarray}
Accordingly, it suffices to show that $h_\pm \in O(m)$ can be chosen so that
$h_+ g_+g_-^{-1}h_-^{-1}$ is a diagonal matrix.
Put $P= g_+g_-^{-1} \in SU(m)$. 
Since ${}^tPP $ is a unitary and symmetric matrix, it can be diagonalized by some $Q\in O(m)$, that is, 
${}^tQ{}^tPP Q = {\rm diag}(e^{\sqrt{-1}\theta_1},\cdots,e^{\sqrt{-1}\theta_m})$ holds for some 
$0\le \theta_1\le \cdots \le\theta_m <2\pi$. 
Note that $Q$ can be chosen so that either ${\rm det}(Q) = 1$ or ${\rm det}(Q) = -1$. 
If we put $R:= P Q {\rm diag}(e^{-\sqrt{-1}\theta_1/2},\cdots,e^{-\sqrt{-1}\theta_m/2}) \in U(m)$, then 
${}^tR=R^*=R^{-1}$ holds, hence $R$ is contained in $O(m)$.
We determine the value of ${\rm det}(Q)$ so that ${\rm det}(R) = 1$.
Hence the assertion follows by putting $h_+ = R^{-1}$, $h_- = Q^{-1}$ and  
$\varphi_i = \theta_i /2$.
Here, $\varphi_i$ never be $0$ since $V_+$ and $V_-$ intersect transversely.

Next we show the uniqueness of $\varphi_1\le \cdots \le\varphi_m$ and the 
orientation of $\R^m_{\varphi}$.
Assume that we have other $\hat{v}:T_pM\to \C^m$ satisfying 
$(i)(ii)(iii)$, and suppose 
\begin{eqnarray}
\hat{v}\circ\iota_* (T_{x_-} L) = \{ (t_1 e^{\sqrt{-1}\hat{\varphi}_1},\cdots, t_m e^{\sqrt{-1}\hat{\varphi}_m})\in \C^m;\ t_i\in\R \}\nonumber
\end{eqnarray}
holds for some $0<\hat{\varphi}_1 \le \cdots \le \hat{\varphi}_m < \pi$.
If we put $\hat{g}:=\hat{v}v^{-1}$, then $\hat{g}$ is in $U(m)$ and preserves 
the subspace $\R^m\subset\C^m$, hence $\hat{g} \in O(m)$.
Moreover $\hat{g}\in SO(m)$ holds since $v$ and $\hat{v}$ satisfy 
$(iii)$.
Moreover $\hat{g}(\R^m_{\varphi}) = \R^m_{\hat{\varphi}}$ also holds,
consequently we can see that
\begin{eqnarray}
G:= {\rm diag}(e^{-\sqrt{-1}\hat{\varphi}_1},\cdots,e^{-\sqrt{-1}\hat{\varphi}_m})\cdot \hat{g}\cdot {\rm diag}(e^{\sqrt{-1}\varphi_1},\cdots,e^{\sqrt{-1}\varphi_m})\nonumber
\end{eqnarray}
is a real matrix, hence we can deduce that 
\begin{eqnarray}
\hat{g}\cdot {\rm diag}(e^{2\sqrt{-1}\varphi_1},\cdots,e^{2\sqrt{-1}\varphi_m})\cdot \hat{g}^{-1}
= {\rm diag}(e^{2\sqrt{-1}\hat{\varphi}_1},\cdots,e^{2\sqrt{-1}\hat{\varphi}_m})
\nonumber
\end{eqnarray}
by $\overline{G}=G$.
Thus we obtain $e^{2\sqrt{-1}\varphi_i}=e^{2\sqrt{-1}\hat{\varphi}_i}$ 
for all $i=1,\cdots,m$, 
which implies $\varphi_i=\hat{\varphi}_i$, since 
$\hat{\varphi}_i$ are taken from $(0,\pi)$. 
Now we have two orientations on $\R^m_{\varphi}$ induced from 
$v$ and $\hat{v}$ respectively. 
These coincide since 
${\rm det}(G) = {\rm det}(\hat{g}) = 1$.
\end{proof}

Here, $\varphi=(\varphi_1, \cdots, \varphi_m)$ is called the characterizing angles between $(L,p_+)$ and $(L,p_-)$.
Under the above situation, 
assume that there is a holomorphic volume form $\Omega$ on $M$ satisfying 
$\omega^m/m!= (-1)^{m(m-1)/2}(\sqrt{-1}/2)^m\Omega\wedge\overline{\Omega}$, where 
$m$ is the complex dimension of $M$.
Let $\Omega_0:=dz_1\wedge \cdots\wedge dz_m$ be the standard holomorphic 
volume form on $\C^m$, 
and assume that $\iota : L\to M$ is a \sL immersion.
Then there exists $v:T_pM\to \C^m$ satisfying Theorem \ref{joyce}. 
By the condition $(ii)$, we can see that $v^*\Omega_0 = \Omega_p$.

Since both of $\iota_* (T_{p_\pm} L)$ are special Lagrangian subspaces, 
there is a positive integer $k = 1,2,\cdots m-1$ and $\varphi_1+ \cdots + \varphi_m =k\pi$ holds. 
Then the intersection point $p\in M $ is said to be of type $k$.
Note that the type depends on the order of $p_+,p_-$. 
If we take the opposite order, the characterizing angles become 
$\pi -\varphi_m,\cdots,\pi -\varphi_1$ and the type becomes $m-k$.

\subsection{\HK case}
An irreducible decomposition of $T^n$-action on $\quater^n$ is given by
\begin{eqnarray}
\quater^n = \bigoplus_{i=1}^n Z_i \oplus \bigoplus_{i=1}^n W_i, \nonumber
\end{eqnarray}
where $Z_i$ and $W_i$ are complex $1$-dimensional representation of $T^n$
defined by 
\begin{eqnarray}
(g_1,\cdots, g_n)z_i:= g_i z_i,\quad (g_1,\cdots, g_n)w_i:= g_i^{-1} w_i\nonumber
\end{eqnarray}
for $(g_1,\cdots, g_n)\in T^n$ and $z_i\in Z_i,\ w_i\in W_i$.
Note that $Z_i$ and $W_i$ are not isomorphic as $\C$-representations, 
but the complex conjugate restricted to $Z_i$ gives an 
isomorphism of $\R$-representations $Z_i\to W_i$.
For $(\alpha,\beta)\in S^3 \subset \C^2$, put 
$h(\alpha,\beta) := (|\alpha|^2 - |\beta|^2, 2{\rm Im}(\alpha \beta), -2{\rm Re}(\alpha \beta)) \in \R^3$.
Then $h:S^3\to S^2$ is the Hopf fibration and $S^1$ action is given by 
$e^{\sqrt{-1}t}\cdot(\alpha,\beta) = (e^{\sqrt{-1}t}\alpha,e^{-\sqrt{-1}t}\beta)$.

Now we put 
\begin{eqnarray}
V_i(y) := \{(\alpha z_i, \beta \overline{z_i})\in Z_i\oplus W_i;\ z_i\in Z_i\}\nonumber
\end{eqnarray}
for $y\in S^2$, where $(\alpha,\beta)\in S^3$ is taken to be $h(\alpha,\beta) = y$.
Then $V_i(y)$ does not depend on the choice of $(\alpha,\beta)$,
and $V_i(y)$ is an sub $\R$-representation of $Z_i\oplus W_i$.
Conversely, any nontrivial sub $\R$-representation of $Z_i\oplus W_i$ is obtained in this way.
Note that $V_i(y) = V_i(y')$ holds if and only if $y=y'$.

\begin{prop}
Let $V\subset \quater^n$ be a $\sigma$-\hL subspace 
which is closed under the $T^n$ action. 
Then we have 
\begin{eqnarray}
V = \bigoplus_{i=1}^n V_i(\varepsilon_i \sigma) \nonumber
\end{eqnarray}
for some $\varepsilon_i = \pm 1$,
and its \hK moment image is given by
\begin{eqnarray}
\mu_n(V) = \{ \sigma\otimes (x_1,\cdots, x_n)\in \ImH \otimes (\mathbf{t}^n)^*;\ \varepsilon_i x_i\ge 0 \}. \nonumber
\end{eqnarray}
\label{model}
\end{prop}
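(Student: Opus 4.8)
\emph{Strategy.} The plan is to decompose $V$ into its parts living in the blocks $Z_i\oplus W_i$ using the representation theory of $T^n$, to identify each part from the $\sigma$-\hL condition, and then to read off the moment image one block at a time. For the first step, note that $\quater^n=\bigoplus_{i=1}^n(Z_i\oplus W_i)$ is the isotypic decomposition of $\quater^n$ as a real $T^n$-representation: distinct blocks carry non-isomorphic isotypic pieces (the weights $\pm e_i$ are distinct up to sign), and each block is the double of a single real irreducible. Hence any $T^n$-invariant $V$ splits as $V=\bigoplus_{i=1}^n V_i$ with $V_i:=V\cap(Z_i\oplus W_i)$, and each $V_i$ is a subrepresentation of $Z_i\oplus W_i$, so $\dim_\R V_i\in\{0,2,4\}$. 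The value $4$ is impossible, since then $V_i=Z_i\oplus W_i$ would be a copy of $\quater$ on which $\omega^{\sigma'}$ restricts to a nondegenerate $2$-form, contradicting $\omega^{\sigma'}|_V=0$. As a $\sigma$-\hL subspace has real dimension $2n$ and there are $n$ blocks, every $\dim_\R V_i$ must equal $2$, and then by the discussion preceding the proposition $V_i=V_i(y_i)$ for a unique $y_i\in S^2$; thus $V=\bigoplus_{i=1}^n V_i(y_i)$.

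For the second step I would work in one block, identified with $\quater$ so that the $T^n$-action on it is right multiplication by the circle $S^1\subset\C\subset\quater$. A routine quaternionic computation, using the normalization of $h$ fixed in the excerpt, shows that for a unit quaternion $q=\alpha+\beta j$ with $h(\alpha,\beta)=y$ one has $q\,i\,\bar q=I_y$ (the imaginary quaternion corresponding to $y\in\R^3=\ImH$) and $V_i(y)=q\C$, so that $V_i(y)$ is precisely the $I^y$-complex line through $q$. Now if $\omega^{\sigma'}$ and $\omega^{\sigma''}$ both vanish on the $2$-plane $V_i$, then $I^{\sigma'}V_i$ and $I^{\sigma''}V_i$ both equal the orthogonal complement of $V_i$ inside the $4$-dimensional block, whence $I^\sigma V_i=V_i$ by the quaternion relations; but a real $2$-plane cannot be invariant under two compatible complex structures $I^y$ and $I^\sigma$ with $\sigma\neq\pm y$, since then $I_y$ and $I_\sigma$ would generate all of $\quater$ and force the plane to be $\{0\}$ or the whole block. (Equivalently, one computes directly that $\omega^{\sigma'}(v,I_y v)=\|v\|^2\langle\sigma',y\rangle$, which vanishes on $V_i(y)$ exactly when $y\perp\sigma'$, and similarly for $\sigma''$, so that $y\in\R\sigma$.) Hence $y_i=\varepsilon_i\sigma$ for some $\varepsilon_i=\pm1$, which gives the first assertion.

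For the moment image, $\mu_n$ is block-diagonal: its $i$-th component on $\quater^n$ is $x_i\,i\,\overline{x_i}$, which for $x_i=qc\in V_i(\varepsilon_i\sigma)=q\C$ equals $|c|^2\,q\,i\,\bar q=|c|^2\varepsilon_i\sigma$ and therefore sweeps the ray $\{s\sigma:\varepsilon_i s\ge0\}$ as $c$ ranges over $\C$. Since the $V_i(\varepsilon_i\sigma)$ vary independently, $\mu_n(V)=\prod_{i=1}^n\{s_i\sigma:\varepsilon_i s_i\ge0\}=\{\sigma\otimes(x_1,\dots,x_n)\in\ImH\otimes(\mathbf{t}^n)^*:\varepsilon_i x_i\ge0\text{ for all }i\}$, as claimed. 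The main obstacle is the bookkeeping in the middle step: one must fix the quaternionic conventions consistently --- left versus right multiplication for the torus action and for $I_1,I_2,I_3$, the identification of $Z_i\oplus W_i$ with $\quater$, and the sign in the definition of $h$ --- so that the identity $q\,i\,\bar q=I_y$, and hence the sign of $\varepsilon_i$ in the final inequality, come out right; the representation-theoretic and symplectic linear-algebra arguments around it are soft.
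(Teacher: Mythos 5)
Your proof is correct and takes essentially the same route as the paper's: a soft representation-theoretic step reduces $V$ to $\bigoplus_{i=1}^n V_i(y_i)$ (the paper gets there via $\quater^n=V\oplus I^{\sigma'}V$ and Schur's lemma, you via the isotypic decomposition plus a dimension count, which is an equivalent mechanism), and the vanishing of $\omega^{\sigma'},\omega^{\sigma''}$ then forces $y_i=\pm\sigma$ by the same linear-algebra computation --- your parenthetical identity $\omega^{\sigma'}(v,I^{y}v)=\|v\|^2\langle\sigma',y\rangle$ is exactly the blockwise form of the paper's formula for $(\omega_1,\omega_2,\omega_3)(P,Q)$. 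You additionally verify the moment-image formula $\mu_n(V)=\{\sigma\otimes(x_1,\dots,x_n);\ \varepsilon_i x_i\ge 0\}$, which the paper's written proof asserts in the statement but does not actually check; that extra step is sound.
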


\begin{proof}
Take $\sigma',\sigma''\in S^2$ so that $(\sigma,\sigma',\sigma'')$ is the orthonormal basis 
with positive orientation of $\R^3$.
Let $I_1,I_2,I_3$ be the standard basis of the pure imaginary part of $\quater$, and 
$I^\sigma,I^{\sigma'},I^{\sigma''}$ be its \hK rotation.
Let $V\subset \quater^n$ be a $\sigma$-holomorphic Lagrangian subspace 
which is closed under the $T^n$ action.
Since $V$ is Lagrangian with respect to $\omega^{\sigma'}$, we have an orthogonal decomposition
$\quater^n = V \oplus I^{\sigma'} V$. 
Then $V$ and $I^{\sigma'} V$ are isomorphic as real representations of $T^n$, 
therefore $V$ should be isomorphic to $\oplus_{i=1}^n Z_i$ as real representations of $T^n$, 
and can be written as $V = \oplus_{i=1}^nV_i(y_i)$ for some $y_i\in S^2$ 
by Schur's Lemma.
Here, every $y_i$ is determined uniquely.
Next we calculate the restriction of $\omega$ to $V$.
Let $(z_1,\cdots, z_n,w_1,\cdots, w_n)$ be the holomorphic coordinate on $\quater^n$, 
where $z_i\in Z_i \cong \C$ and $w_i \in W_i \cong \C$ then $\omega$ can be written as 
\begin{eqnarray}
\omega_1 &=& \frac{\sqrt{-1}}{2}\sum_{i=1}^n(dz_i\wedge d\overline{z_i} + dw_i\wedge d\overline{w_i}),\nonumber\\
\omega_2+\sqrt{-1}\omega_3 &=& \sum_{i=1}^n dz_i\wedge dw_i.\nonumber
\end{eqnarray}
Take $(\alpha_i,\beta_i)\in S^3$ such that $h(\alpha_i,\beta_i) = y_i$. 
Then $P,Q\in V=\oplus_{i=1}^nV_i(y_i)$ can be written as 
\begin{eqnarray}
P &=& (\alpha_1p_1,\cdots, \alpha_n p_n, \beta_1\overline{p_1},\cdots \beta_n\overline{p_n}),\nonumber\\
Q &=& (\alpha_1q_1,\cdots, \alpha_n q_n, \beta_1\overline{q_1},\cdots \beta_n\overline{q_n})\nonumber
\end{eqnarray}
for some $p_i\in Z_i$ and $q_i\in W_i$.
Then we obtain 
\begin{eqnarray}
\omega_1(P,Q) &=& \sum_{i=1}^n(|\alpha_i|^2 - |\beta_i|^2 ){\rm Im}(\overline{p_i}q_i),\nonumber\\
(\omega_2+\sqrt{-1}\omega_3)(P,Q) &=& -2\sqrt{-1}\sum_{i=1}^n \alpha_i\beta_i{\rm Im}(\overline{p_i}q_i).\nonumber
\end{eqnarray}
Hence $\sigma$ Lagrangian condition for $V$ is equivalent to that the vector 
\[ 
\left (
\begin{array}{c}
 \sum_{i=1}^n(|\alpha_i|^2 - |\beta_i|^2 ){\rm Im}(\overline{p_i}q_i) \\
2\sum_{i=1}^n {\rm Im}(\alpha_i\beta_i){\rm Im}(\overline{p_i}q_i) \\
-2\sum_{i=1}^n {\rm Re}(\alpha_i\beta_i){\rm Im}(\overline{p_i}q_i)
\end{array}
\right )\in \R^3
\]
is orthogonal to $\sigma',\sigma''\in \R^3$ for any $p_i,q_i\in\C$.
Thus every 
\[ y_i = 
\left (
\begin{array}{c}
 |\alpha_i|^2 - |\beta_i|^2  \\
2 {\rm Im}(\alpha_i\beta_i) \\
-2 {\rm Re}(\alpha_i\beta_i)
\end{array}
\right )
\]
is equal to $\pm \sigma$ because $\{\sigma,\sigma',\sigma''\}$ is an orthonormal basis.
\end{proof}

\begin{prop}
Let $(X,\omega,\mu)$ and $p$ be as in Proposition \ref{3.2}. 
Let $L\subset X$ be a $\sigma$-\hL submanifold containing $p$, and 
assume that there exists a sufficiently small $r >0$, $\varepsilon_i,\varepsilon'_i = \pm 1$, and
\begin{eqnarray}
(\mu (L)-\mu(p))\cap B(r) &=& \sigma \otimes \{ x \in (\mathbf{t}^n)^*;\ 
\| x\| < r, \varepsilon'_i x_i \ge 0\},\nonumber\\
\mu_n (V) &=& \sigma \otimes \{ x \in (\mathbf{t}^n)^*;\ 
\varepsilon_i x_i \ge 0\}\nonumber
\end{eqnarray}
holds, where $V= dF_p(T_p L)$ and $B(r) = \{ y\in\ImH \otimes (\mathbf{t}^n)^*; \| y\| <r\}$.
Then $\varepsilon_i = \varepsilon'_i$ holds for every $i=1,\cdots, n$.
\label{apex}
\end{prop}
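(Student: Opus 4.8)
The plan is to push the whole question into the local model $\quater^n$ through the map $F$ of Proposition \ref{3.2}, and then to compare the hyper-K\"ahler moment image of $F(L)$ near the origin with that of its tangent space $V=dF_p(T_pL)$ by a second-order Taylor estimate, using only that $\mu_n$ is a homogeneous quadratic map.

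First I would set up the local picture. Since $F$ is $T^n$-equivariant and $p$ is a $T^n$-fixed point, $F(p)$ is a $T^n$-fixed point of $\quater^n$, hence the origin; so $F$ identifies $L\cap U$ with a submanifold of $B_{\quater^n}(\varepsilon)$ through $0$ whose tangent space there is $V$. Writing $\quater^n=V\oplus V^\perp$, near $0$ we may therefore describe $F(L\cap U)$ as a graph $\{v+\psi(v):v\in V,\ |v|<\delta\}$ with $\psi(0)=0$ and $d\psi_0=0$, so that $\psi(v)=O(|v|^2)$. By Proposition \ref{3.2} we also have $\mu_n(F(q))=\mu(q)-\mu(p)$ for $q\in L\cap U$, so the hypothesis on $\mu(L)-\mu(p)$ says that whenever $\|\mu_n(F(q))\|<r$ the value $\mu_n(F(q))$ lies in $\sigma\otimes\{x:\|x\|<r,\ \varepsilon'_ix_i\ge 0\}$.

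Next I would fix $j$ and prove $\varepsilon_j=\varepsilon'_j$. From the hypothesis $\mu_n(V)=\sigma\otimes\{x:\varepsilon_ix_i\ge 0\}$ (in fact Proposition \ref{model} describes $V$ explicitly as $\bigoplus_iV_i(\varepsilon_i\sigma)$), the point $\sigma\otimes x^{(j)}$, where $x^{(j)}$ has $j$-th entry $\varepsilon_j$ and every other entry $0$, lies in $\mu_n(V)$; choose $v_j\in V$, necessarily nonzero, with $\mu_n(v_j)=\sigma\otimes x^{(j)}$. Since $V$ is a linear subspace and $\mu_n$ is homogeneous of degree $2$, $tv_j\in V$ and $\mu_n(tv_j)=t^2\,\sigma\otimes x^{(j)}$ for every $t\in\R$. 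For $0<t<\delta/|v_j|$ put $P(t):=tv_j+\psi(tv_j)$, a point of $F(L\cap U)$; because every real component of $\mu_n$ is a quadratic form,
\[
\mu_n(P(t))=t^2\,\sigma\otimes x^{(j)}+E(t),\qquad \|E(t)\|=O(t^3),
\]
since $E(t)$ is a sum of a term bilinear in $tv_j$ and $\psi(tv_j)=O(t^2)$ together with a term quadratic in $\psi(tv_j)$.

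Finally, suppose $\varepsilon'_j=-\varepsilon_j$. For $t$ small we have $\|\mu_n(P(t))\|=O(t^2)<r$, so $\mu_n(P(t))=\sigma\otimes y(t)$ with $\varepsilon'_iy_i(t)\ge 0$ for all $i$. Reading off the $j$-th coordinate in $\ImH\otimes(\mathbf{t}^n)^*$ and pairing with the unit vector $\sigma\in\ImH$, the displayed identity gives $y_j(t)=\varepsilon_jt^2+O(t^3)$, hence $\varepsilon'_jy_j(t)=-t^2+O(t^3)$, which is negative for all sufficiently small $t>0$ and contradicts $\varepsilon'_jy_j(t)\ge 0$. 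Therefore $\varepsilon_j=\varepsilon'_j$ for every $j$. The one point that demands genuine care is the Taylor estimate $\|E(t)\|=O(t^3)$ — equivalently, the statement that the moment image of $F(L\cap U)$ near the origin agrees to leading order with that of the tangent space $V$ — which rests precisely on $\mu_n$ being quadratic and on $\psi$ vanishing to second order; the rest is elementary.
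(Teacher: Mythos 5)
Your proof is correct and is essentially the argument the paper gives: both rest on the identity $\mu(L)-\mu(p)=\mu_n(F(L))$ from Proposition \ref{3.2} and on the fact that $\mu_n$ is a homogeneous quadratic map, so that the moment images of $F(L)$ and of its tangent space $V$ agree to third order near the origin, which pins down the signs. The only difference is the direction of comparison --- the paper pushes a curve of $F(L)$ lying over the interior of the cone into $V$ via a diffeomorphism tangent to the identity, whereas you lift a ray of $V$ over the $j$-th edge of the cone to a graph point of $F(L)$ --- and your version is if anything slightly cleaner, since the leading term $\varepsilon_j t^2$ of the $j$-th component is explicit and visibly dominates the $O(t^3)$ error.
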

\begin{proof}
By the first equation of Proposition \ref{3.2}, we have 
$\mu (L)-\mu(p) = \mu_n(F(L))$.
Since $T_0F(L) = dF_p(T_pL)$, $\mu_n (V) = \mu_n(T_0F(L))$ holds.
Now we have open neighborhoods $U_0\subset F(L)$ of $0$, 
$U_1\subset T_0F(L)$ of $0$, and a diffeomorphism $f:U_0\to U_1$ such that 
$f(0)= 0$ and $df_0= {\rm id}$.
Next we take a smooth map $\gamma:(-1,1) \to F(L)$ which satisfies 
$\gamma(0) = F(p) = 0$, $\varepsilon'_i\mu_n^i(\gamma(t)) >0$ for $t\neq 0$. 
Here $\mu_n^i$ is the $i$-th component of $\mu_n$.
Since $\| f(x)-x\| = \mathcal{O}(\| x\|^2)$ and 
$\mu_n(x + \delta x) = \mu_n(x) + \mathcal{O}(\| x\| \| \delta x\|)$ holds, 
we have 
\begin{eqnarray}
\mu_n(f\circ\gamma(t)) &=& \mu_n(\gamma(t) + \mathcal{O}(\| \gamma(t)\|^2))\nonumber\\
&=& \mu_n(\gamma(t)) + \mathcal{O}(\| \gamma(t)\|^3)\nonumber
\end{eqnarray}
If we take $t$ sufficiently close to $0$, then $\| \gamma(t)\|$ is sufficiently small 
but $\varepsilon'_i\mu_n^i(\gamma(t)) >0$, hence 
$\varepsilon'_i\mu_n^i(f\circ\gamma(t))$ should be positive 
for small $t$ since $\mu_n$ is a quadratic polynomial. 
Since $\mu_n(f\circ\gamma(t)) \in \mu_n (V)$, $\varepsilon_i = \varepsilon'_i$ must holds. 
We have taken $i$ arbitrarily, $\varepsilon_i = \varepsilon'_i$ holds for every $i=1,\cdots,n$.
\end{proof}

Let 
\begin{eqnarray}
\sigma(\theta)=(0,\cos\theta,\sin\theta) \in S^2.\nonumber
\end{eqnarray}
Then every $\sigma(\theta)$-holomorphic Lagrangian submanifold is 
special Lagrangian, if $n\theta \in \pi \mathbb{Z}$.

\begin{prop}
Let $n\theta_{\pm} \in \pi\mathbb{Z}$ and $V_{\pm}$ be 
$T^n$-invariant $\sigma(\theta_{\pm})$-holomorphic Lagrangian subspaces 
of $\quater^n$ given by 
\begin{eqnarray}
V_+ := \bigoplus_{i=1}^n V_i( \sigma(\theta_+) ),\quad
V_- := \bigoplus_{i=1}^n V_i( \sigma(\theta_-) ).\nonumber
\end{eqnarray}
Then the characterizing angles between $V_+$ and $V_-$ are given by 
$(\theta_{-} - \theta_{+})/2$ with multiplicity $2n$.
\label{angle}
\end{prop}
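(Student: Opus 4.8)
The plan is to reduce the statement to an explicit linear-algebra computation in $U(2)$, performed blockwise with respect to $\quater^n=\bigoplus_{i=1}^n(Z_i\oplus W_i)$, and then to read off the characterizing angles from the recipe contained in the proof of Theorem \ref{joyce}. First I would record that recipe. Working in $\quater^n=\C^{2n}$ with the complex structure $I_1$, the K\"ahler form $\omega_1$ and the standard real form $\R^{2n}\subset\C^{2n}$ expressed in the holomorphic coordinates $(z_1,\dots,z_n,w_1,\dots,w_n)$, the proof of Theorem \ref{joyce} shows that the characterizing angles between two transverse Lagrangian subspaces $V_+,V_-\subset\C^{2n}$ are precisely one half of the arguments, chosen in $[0,2\pi)$, of the eigenvalues of ${}^tPP$, where $P=g_+g_-^{-1}$ for any unitary $g_\pm$ with $g_\pm V_\pm=\R^{2n}$. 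A different admissible choice of $g_\pm$ replaces $P$ by $O_+PO_-^{-1}$ with $O_\pm\in O(2n)$, which conjugates ${}^tPP$ by the real orthogonal matrix $O_-$ and hence does not change its spectrum; in particular, if $A_\pm\in U(2n)$ satisfies $A_\pm\R^{2n}=V_\pm$, we may take $P=A_+^{-1}A_-$.

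Next I would use the block structure. Since $V_\pm=\bigoplus_i V_i(\sigma(\theta_\pm))$ is compatible with $\quater^n=\bigoplus_i(Z_i\oplus W_i)$, and $\R^{2n}=\bigoplus_i\R^2$ in the coordinates $(z_i,w_i)$, it suffices to find, for each $\theta$, a matrix $A_\theta\in U(2)$ with $A_\theta\R^2=V_i(\sigma(\theta))\subset Z_i\oplus W_i$; then $A_\pm=\bigoplus_i A_{\theta_\pm}$, so $P=\bigoplus_i\bigl(A_{\theta_+}^{-1}A_{\theta_-}\bigr)$ and ${}^tPP$ is block diagonal with $n$ identical $2\times2$ blocks. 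Taking $(\alpha,\beta)=\bigl(\tfrac1{\sqrt2},\tfrac{\sqrt{-1}}{\sqrt2}e^{\sqrt{-1}\theta}\bigr)\in S^3$, a direct check gives $h(\alpha,\beta)=\sigma(\theta)$, so by the definition of $V_i$ the subspace $V_i(\sigma(\theta))$ is spanned, Hermitian-orthonormally, by $\tfrac1{\sqrt2}(1,\sqrt{-1}e^{\sqrt{-1}\theta})$ and $\tfrac1{\sqrt2}(\sqrt{-1},e^{\sqrt{-1}\theta})$. Hence
\begin{eqnarray}
A_\theta:=\frac1{\sqrt2}\begin{pmatrix}1 & \sqrt{-1}\\ \sqrt{-1}e^{\sqrt{-1}\theta} & e^{\sqrt{-1}\theta}\end{pmatrix}\in U(2),\qquad A_\theta\,\R^2=V_i(\sigma(\theta)).\nonumber
\end{eqnarray}

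Then I would carry out the $2\times2$ computation. Setting $\phi:=\theta_--\theta_+$, $a:=\tfrac12(1+e^{\sqrt{-1}\phi})$ and $b:=\tfrac{\sqrt{-1}}2(1-e^{\sqrt{-1}\phi})$, multiplying out gives $A_{\theta_+}^{-1}A_{\theta_-}=\begin{pmatrix}a & b\\ -b & a\end{pmatrix}$, so
\begin{eqnarray}
{}^t\!\bigl(A_{\theta_+}^{-1}A_{\theta_-}\bigr)\bigl(A_{\theta_+}^{-1}A_{\theta_-}\bigr)=(a^2+b^2)I_2=e^{\sqrt{-1}(\theta_--\theta_+)}I_2,\nonumber
\end{eqnarray}
using $(1+e^{\sqrt{-1}\phi})^2-(1-e^{\sqrt{-1}\phi})^2=4e^{\sqrt{-1}\phi}$. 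Therefore ${}^tPP=e^{\sqrt{-1}(\theta_--\theta_+)}I_{2n}$, all $2n$ of its eigenvalues equal $e^{\sqrt{-1}(\theta_--\theta_+)}$, and halving the argument shows that every characterizing angle between $V_+$ and $V_-$ equals $(\theta_--\theta_+)/2$, with multiplicity $2n$.

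I expect the only delicate point to be bookkeeping rather than a genuine difficulty: extracting cleanly from the proof of Theorem \ref{joyce} the statement ``characterizing angles $=$ half-arguments of the eigenvalues of ${}^tPP$'', and checking that $V_+,V_-$ are indeed transverse so that these angles are defined and lie in $(0,\pi)$ --- the latter holds exactly when $\sigma(\theta_+)\neq\sigma(\theta_-)$, equivalently $V_+\neq V_-$, which is where the standing assumption on $\theta_\pm$ enters, while $n\theta_\pm\in\pi\Z$ makes each $V_\pm$ special Lagrangian and forces $\det P=e^{\sqrt{-1}n(\theta_--\theta_+)}=\pm1$, consistently with $\varphi_1+\dots+\varphi_{2n}\in\pi\Z$. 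Conceptually, the factor $\tfrac12$ is the half-angle of the Hopf fibration $h:S^3\to S^2$, entering through the double cover $SU(2)\to SO(3)$.
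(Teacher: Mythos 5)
Your proposal is correct and follows essentially the same route as the paper: the paper likewise reduces to $2\times 2$ blocks via explicit unitaries $A(\theta)$ with $A(\theta)\,V_i(\sigma(\theta))=\R^2$ (the inverses of your $A_\theta$, up to a phase on the Hopf fibre), invokes the recipe from the proof of Theorem \ref{joyce} that the characterizing angles are half the arguments of the eigenvalues of ${}^tPP$ with $P=g_+g_-^{-1}$, and computes ${}^t\bigl(A(\theta_+)A(\theta_-)^{-1}\bigr)A(\theta_+)A(\theta_-)^{-1}=e^{\sqrt{-1}(\theta_--\theta_+)}\mathrm{Id}$. Your added remarks on the independence of the choice of $g_\pm$ and on transversality are sound but not needed beyond what the paper already establishes.
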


\begin{proof}
Since $h(\sqrt{-1}/\sqrt{2},e^{\sqrt{-1} \theta_{\pm}} / \sqrt{2}) = \sigma(\theta_{\pm})$, 
we have 
\begin{eqnarray}
V_{\pm}=\{(\frac{\sqrt{-1}}{\sqrt{2}}z_1, \frac{e^{\sqrt{-1} \theta_{\pm}}}{\sqrt{2}}\overline{z_1}, \cdots, 
\frac{\sqrt{-1}}{\sqrt{2}}z_n, \frac{e^{\sqrt{-1} \theta_{\pm}}}{\sqrt{2}}\overline{z_n}) \in \quater^n;\ 
z_1,\cdots, z_n\in \C \}\nonumber
\end{eqnarray}
respectively.
Put 
\[ A(\theta) := \frac{1}{\sqrt{2}}
\left (
\begin{array}{ccc}
-\sqrt{-1} & e^{-\sqrt{-1} \theta}   \\
-1 & \sqrt{-1}e^{-\sqrt{-1} \theta}
\end{array}
\right ),
\]
and 
\[ g_+ := 
\left (
\begin{array}{ccc}
A(\theta_{+}) &  & \rm{O}  \\
 & \ddots & \\
\rm{O} & & A(\theta_{+})
\end{array}
\right ),\quad
g_- := 
\left (
\begin{array}{ccc}
A(\theta_{-}) &  & \rm{O}  \\
 & \ddots & \\
\rm{O} & & A(\theta_{-})
\end{array}
\right ).
\]
Since $g_+V_+ = g_-V_- = \R^{2n}$ holds, 
then the characterizing angles are the argument of the square root of the 
eigenvalues of  ${}^tPP$, where $P = g_+g_-^{-1}$, 
by the proof of Theorem \ref{joyce}.
Since 
\begin{eqnarray}
{}^t(A(\theta_{+})A(\theta_{-})^{-1})A(\theta_{+})A(\theta_{-})^{-1} 
= e^{\sqrt{-1}(\theta_{-} - \theta_{+})}{\rm Id},\nonumber
\end{eqnarray}
the characterizing angles turn out to be $(\theta_{-} - \theta_{+})/2$ with multiplicity $2n$.
\end{proof}

Now we consider the case that 
\begin{eqnarray}
(M,J,\omega,\Omega)=(X(u,\lambda),I_1,(\omega_{\lambda,2} + \sqrt{-1} \omega_{\lambda,3})^n)\nonumber
\end{eqnarray}
and $L=L_+\sqcup L_- $, where $L_{\pm}$ is embedded as $\sigma(\theta_{\pm})$-holomorphic Lagrangian submanifolds 
respectively, for some $\theta_{\pm}\in\R$. 
Denote by $\iota: L\to X(u,\lambda)$ the immersion.
Assume that the image of $L$ is a $T^n$ invariant subset of $X(u,\lambda)$, and $p$ is the fixed point of the torus action.
In this subsection, we see the characterizing angles between $(L,p_+)$ and $(L,p_-)$ in this situation.

Take $F:U\to B_{\quater^n}(\varepsilon)$ as in Proposition \ref{3.2}. 
Then $dF_p:T_pX(u,\lambda) \to \quater^n$ is $T^n$-equivariant 
and satisfies $dF_p^*(\omega_{\quater^n}|_0) = \omega|_p$ 
by the first equation in Proposition \ref{3.2} 
since $d\mu_p=0$. 
Here, a $T^n$-action on $T_pX(u,\lambda)$ is 
induced from the torus action on $X(u,\lambda)$ since $p$ is fixed by the action.
Then $V_{\pm}:=dF_p\circ\iota_*(T_{p_{\pm}}L)$ is a 
$\sigma_{\pm}$-holomorphic Lagrangian subspace of $\quater^n$, respectively.
Moreover, $V_{\pm}$ are closed under the $T^n$-action.

\begin{prop}
Under the above setting, assume that there is a sufficiently small $r>0$ 
and 
\begin{eqnarray}
(\mu (L_\pm)-\mu(p))\cap B(r) &=& \sigma_{\pm} \otimes \{ x \in (\mathbf{t}^n)^*;\ 
\| x\| < r, x_i \ge 0\}\nonumber\nonumber
\end{eqnarray}
holds respectively. 
Then the characterizing angles between $(L,p_+)$ and $(L,p_-)$ are given by 
$(\theta_- - \theta_+)/2$ with multiplicity $2n$.
\end{prop}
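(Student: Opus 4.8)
The plan is to transport the whole problem to the local model $\quater^n$ via $dF_p$ and then read off the answer from Propositions \ref{model}, \ref{apex} and \ref{angle}. First I would record that $dF_p:T_pX(u,\lambda)\to\quater^n$ is a $\C$-linear (with respect to $I_1$) isometric isomorphism preserving the K\"ahler form $\omega_1$: this is exactly the identity $dF_p^*(\omega_{\quater^n}|_0)=\omega|_p$ noted before the statement, which follows from the first equation of Proposition \ref{3.2} together with $d\mu_p=0$ (and componentwise it even intertwines all three of $I_1,I_2,I_3$, which is what makes $V_\pm$ into $\sigma(\theta_\pm)$-holomorphic Lagrangian subspaces). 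Consequently $v:=dF_p$ is an admissible auxiliary identification for Theorem \ref{joyce}, and since the characterizing angles and the orientation data produced by that theorem do not depend on the choice of $v$ — while the orientation of $L$ is fixed once and for all — the characterizing angles between $(L,p_+)$ and $(L,p_-)$ coincide with the characterizing angles between the Lagrangian subspaces $V_+=dF_p\circ\iota_*(T_{p_+}L)$ and $V_-=dF_p\circ\iota_*(T_{p_-}L)$ of $(\quater^n,I_1,\omega_{\quater^n,1})$.

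Next I would pin down $V_\pm$ exactly. Each $V_\pm$ is $T^n$-invariant and $\sigma(\theta_\pm)$-holomorphic Lagrangian, so Proposition \ref{model} furnishes signs $\varepsilon_i^\pm=\pm 1$ with
\begin{eqnarray}
V_\pm=\bigoplus_{i=1}^n V_i(\varepsilon_i^\pm\sigma(\theta_\pm)),\qquad
\mu_n(V_\pm)=\sigma(\theta_\pm)\otimes\{\, x\in(\mathbf{t}^n)^*;\ \varepsilon_i^\pm x_i\ge 0\,\}.\nonumber
\end{eqnarray}
On the other hand, by the second equation of Proposition \ref{3.2} we have $\mu(L_\pm)-\mu(p)=\mu_n(F(L_\pm))$ near $p$, and $V_\pm=dF_p(T_{p_\pm}L)=T_0F(L_\pm)$; thus the hypothesis $(\mu(L_\pm)-\mu(p))\cap B(r)=\sigma(\theta_\pm)\otimes\{x;\ \|x\|<r,\ x_i\ge 0\}$ is precisely the situation of Proposition \ref{apex} with $\varepsilon'_i=1$. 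Applying Proposition \ref{apex} once with $L_+$ and once with $L_-$ forces $\varepsilon_i^\pm=1$ for every $i=1,\dots,n$, and hence $V_\pm=\bigoplus_{i=1}^n V_i(\sigma(\theta_\pm))$.

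These are exactly the subspaces $V_\pm$ appearing in Proposition \ref{angle}, whose conclusion then gives that the characterizing angles between $V_+$ and $V_-$, and therefore between $(L,p_+)$ and $(L,p_-)$, are $(\theta_--\theta_+)/2$ with multiplicity $2n$. The only step requiring real care is the reduction in the first paragraph: one must check that ``characterizing angles'' is genuinely a linear-symplectic invariant carried over faithfully by $dF_p$, in particular that the orientation convention $(iii)$ of Theorem \ref{joyce} matches on both sides — it does, because $dF_p$ is orientation-compatible and the orientation of $L$ is prescribed — and that $(\theta_--\theta_+)/2$ lies in $(0,\pi)$ so that it is actually a characterizing angle in the sense of Theorem \ref{joyce}; the latter is inherited from the hypotheses underlying Proposition \ref{angle} (equivalently, transversality of $V_+$ and $V_-$) and is harmless in the cases to which this proposition will be applied. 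Everything else is a direct concatenation of Propositions \ref{model}, \ref{apex} and \ref{angle}, with no further computation.
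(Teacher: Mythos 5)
Your proposal is correct and follows the same route as the paper: identify $V_\pm$ via $dF_p$, use Proposition \ref{model} to get $V_\pm=\bigoplus_i V_i(\varepsilon_i^\pm\sigma(\theta_\pm))$, pin down the signs with Proposition \ref{apex}, and conclude with Proposition \ref{angle}. The paper's own proof is just a two-line version of this concatenation, so your write-up simply supplies the details (in particular the justification that $dF_p$ is an admissible identification for Theorem \ref{joyce}) that the paper leaves implicit.
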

\begin{proof}
By combining Propositions \ref{model}\ref{apex}, we can see that 
\begin{eqnarray}
V_\pm = \bigoplus_{i=1}^n V_i( \sigma(\theta_\pm) )\nonumber
\end{eqnarray}
respectively. 
Thus we have the assertion by Proposition \ref{angle}.
\end{proof}

\section{Proof of Theorem \ref{gluing}}\label{sec5}
In this section we prove Theorem \ref{gluing}.
Although Theorem \ref{gluing} follows from Theorem 9.7 of 
\cite{joyce2003special} essentially, 
we need some additional argument about the quivers.
Let $Q = (\mathcal{V},\mathcal{E},s,t)$ be a quiver, 
that is, $\mathcal{V}$ consists of finite vertices, 
$\mathcal{E}$ consists of directed finite edges, 
and $s,t:\mathcal{E} \to \mathcal{V}$ are maps.
Here, $s(h)$ and $t(h)$ means the source and 
the target of $h\in\mathcal{E}$ respectively. 
The quiver is said to be connected if 
any two vertices are connected by some edges.
Given the quiver, we have operators 
\begin{eqnarray}
\partial &:& \R^\mathcal{E} \to \R^\mathcal{V}\nonumber\\
\partial^* &:& \R^\mathcal{V} \to \R^\mathcal{E}\nonumber
\end{eqnarray}
defined by
\begin{eqnarray}
\partial (\sum_{h\in \mathcal{E}}A_h\cdot h) 
&:=& \sum_{k\in \mathcal{V}}A_h\cdot (s(h) - t(h)),\nonumber\\
\partial^*(\sum_{k\in \mathcal{V}}x_k\cdot k) &:=& 
\sum_{h\in \mathcal{E}}(x_{s(h)} - x_{t(h)})\cdot h.\nonumber
\end{eqnarray}
Here, $\R^\mathcal{E}$ and $\R^\mathcal{V}$ are the 
free $\R$-modules generated by elements of 
$\mathcal{E}$ and $\mathcal{V}$ respectively.
Since $\partial^*$ is the adjoint of $\partial$, we have 
\begin{eqnarray}
\mathbf{h}_0(Q) - \mathbf{h}_1(Q) = \#\mathcal{V} - \#\mathcal{E},\label{euler}
\end{eqnarray}
where $\mathbf{h}_0(Q) = {\rm dim\ Ker} \partial^*$ 
and $\mathbf{h}_1(Q) = {\rm dim\ Ker} \partial$.
Note that $\mathbf{h}_0(Q)$ is equal to the number of the connected components of $Q$.

We need the following lemmas for the proof of Theorem \ref{gluing}.

\begin{lem}
Let $Q$ be as above.
The set $(\R_{>0})^\mathcal{E}\cap {\rm Ker}(\partial)$ is nonempty 
if and only if $\mathcal{E}$ is covered by cycles.
\label{quiver1}
\end{lem}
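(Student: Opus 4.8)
The plan is to prove both directions by an explicit analysis of $\ker(\partial)\subset\R^{\mathcal E}$ in terms of the combinatorics of cycles. The operator $\partial$ sends $\sum_h A_h\cdot h$ to $\sum_k\bigl(\sum_{h:s(h)=k}A_h-\sum_{h:t(h)=k}A_h\bigr)k$, so an element lies in $\ker(\partial)$ precisely when, at every vertex, the total weight on outgoing edges equals the total weight on incoming edges; that is, $A$ is a (real-valued) circulation on the directed graph. The statement to be proved is then the classical fact that a directed graph admits a strictly positive circulation if and only if every edge lies on a directed cycle.

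\medskip

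\noindent\textbf{Easy direction (cycles $\Rightarrow$ positive circulation).} Suppose $\mathcal E$ is covered by cycles. For each cycle $S=\{h_1,\dots,h_l\}$ with $t(h_k)=s(h_{k+1})$ and $t(h_l)=s(h_1)$, let $\chi_S:=\sum_{h\in S}h\in\R^{\mathcal E}$ (summing with multiplicity if an edge is traversed more than once, though for our purposes simple cycles suffice). A direct check shows $\partial(\chi_S)=0$, since each vertex visited by the cycle receives one incoming and one outgoing unit. Now enumerate a finite collection of cycles $S_1,\dots,S_N$ whose union is all of $\mathcal E$ (possible since every edge lies in at least one cycle, and $\mathcal E$ is finite), and set $A:=\sum_{j=1}^N \chi_{S_j}$. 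Then $\partial(A)=0$, and for each edge $h$ the coefficient $A_h$ equals the number of $j$ with $h\in S_j$, which is $\ge 1$ by the covering hypothesis. Hence $A\in(\R_{>0})^{\mathcal E}\cap\ker(\partial)$, which is therefore nonempty.

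\medskip

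\noindent\textbf{Harder direction (positive circulation $\Rightarrow$ cycles).} Conversely, suppose there exists $A\in(\R_{>0})^{\mathcal E}\cap\ker(\partial)$, and fix an arbitrary edge $h_0\in\mathcal E$; I must exhibit a directed cycle through $h_0$. Starting from $h_0$, I build a directed walk forward: having arrived at a vertex $v=t(h_0)$, the circulation condition at $v$ reads $\sum_{h:t(h)=v}A_h=\sum_{h:s(h)=v}A_h$; since the left side includes the term $A_{h_0}>0$, the right side is strictly positive, so there is an outgoing edge $h_1$ with $s(h_1)=v$ and $A_{h_1}>0$. Iterating, I obtain an infinite walk $h_0,h_1,h_2,\dots$ all of whose edges have positive $A$-weight; since $\mathcal V$ is finite, some vertex repeats, and the portion of the walk between two successive visits to that vertex is a directed cycle. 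This cycle need not yet contain $h_0$ itself, so the key refinement is to run the argument \emph{symmetrically in both directions}: using the circulation condition at $s(h_0)$ in the same way, I extend the walk backward to get edges $\dots,h_{-2},h_{-1},h_0$ with positive weights. Combining, I have a bi-infinite walk through $h_0$; the first repeated vertex encountered going forward from $t(h_0)$ and the first repeated one going backward from $s(h_0)$ can be spliced — more cleanly, I note that the set of vertices reachable from $t(h_0)$ via positive-weight edges and the set from which $s(h_0)$ is reachable via positive-weight edges must intersect (a counting/finiteness argument using that $A$ is a positive circulation confined to the subgraph of positive-weight edges, which is all of $\mathcal E$), yielding a directed path from $t(h_0)$ back to $s(h_0)$, which together with $h_0$ closes up to a cycle containing $h_0$. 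Since $h_0$ was arbitrary, $\mathcal E$ is covered by cycles.

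\medskip

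\noindent The main obstacle is the second direction: producing a cycle through a \emph{prescribed} edge, rather than merely somewhere in the graph. The clean way to handle this is to observe that the subgraph on which $A>0$ (here, all of $\mathcal E$) supports a positive circulation, hence has no "sink" or "source" in the reachability sense, so from $t(h_0)$ one can always continue forward indefinitely and dually reach $s(h_0)$ from the past; finiteness of $\mathcal V$ then forces a return. I would phrase this via the forward-reachable set $R^+$ of $t(h_0)$ and backward-reachable set $R^-$ of $s(h_0)$ and show $R^+\cap R^-\ne\emptyset$ by noting that if they were disjoint, summing the circulation identity over the vertices of $R^+$ would give that the net flow out of $R^+$ is zero while every edge leaving $R^+$ carries positive weight and at least the edge structure forces one such edge — contradiction — which pins down the argument without appealing to any infinite-walk bookkeeping.
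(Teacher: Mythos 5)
Your first direction is identical to the paper's: both take the sum $\sum_j\chi_{S_j}$ over a finite family of covering cycles. For the converse, however, you take a genuinely different route. The paper also starts from an arbitrary $h_0$ and walks forward using the circulation identity until a vertex repeats, obtaining \emph{some} cycle $S_0$; but if $h_0\notin S_0$ it does not try to reach $s(h_0)$ directly. Instead it subtracts $A_0\chi_{S_0}$ with $A_0=\min_{h\in S_0}A_h$, deletes the edges whose weight drops to zero, and observes that the result is again a strictly positive circulation on a strictly smaller edge set still containing $h_0$; finiteness of $\mathcal{E}$ then forces $h_0$ to land in some cycle after finitely many peelings. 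Your reachability-set argument is more direct for the stated claim (it produces a cycle through the \emph{prescribed} edge in one step, with no induction), while the paper's peeling argument has the side benefit of decomposing the circulation into positively weighted cycles. Both are sound.

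One detail in your cut argument is misstated, though the idea is right. If $R^+$ is the set of vertices forward-reachable from $t(h_0)$, then by construction there are \emph{no} edges leaving $R^+$, so "every edge leaving $R^+$ carries positive weight and the edge structure forces one such edge" cannot be the source of the contradiction. Summing the circulation identity over $R^+$ gives $0=(\text{flow out of }R^+)-(\text{flow into }R^+)=0-\sum_{h:\,s(h)\notin R^+,\ t(h)\in R^+}A_h$, and since all $A_h>0$ this forces that no edge enters $R^+$ from outside; but $h_0$ itself would be such an edge if $s(h_0)\notin R^+$. Hence $s(h_0)\in R^+$, giving a directed path from $t(h_0)$ to $s(h_0)$ that closes up with $h_0$ into a cycle. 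With the cut oriented this way your proof is complete; the bi-infinite-walk splicing discussion can be dropped entirely.
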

\begin{proof}
Suppose that $\mathcal{E} = \bigcup_{\alpha} S_k$ holds for some cycles 
$S_1,\cdots, S_N$. 
For a subset $S\subset \mathcal{E}$, define $\chi_S\in \R^\mathcal{E}$ by 
\[ (\chi_{S})_h :=
\left\{
\begin{array}{cc}
1 & (h\in S), \\
0 & (h\notin S).
\end{array}
\right.
\]
Then $\sum_{k=1}^N \chi_{S_k}$ is contained in $(\R_{>0})^\mathcal{E}\cap {\rm Ker}(\partial)$.

Conversely, assume that there exists 
$A = \sum_{h\in\mathcal{E}}A_h\cdot h\in (\R_{>0})^\mathcal{E}\cap {\rm Ker}(\partial)$,
and take $h_0\in \mathcal{E}$ arbitrarily. 
Since $\partial(A) = 0$, we have
\begin{eqnarray}
\sum_{h\in s^{-1}(t(h_0))}A_h = \sum_{h\in t^{-1}(t(h_0))}A_h \ge A_{h_0}>0.\nonumber
\end{eqnarray}
Hence $s^{-1}(t(h_0))$ is nonempty, we can take $h_1\in s^{-1}(t(h_0))$.
By repeating this procedure, we obtain $h_0,h_1,\cdots,h_l$ so that 
$t(h_k)=s(h_{k+1})$ for $k=0,\cdots,l-1$. 
Stop this procedure when $t(h_l) = s(h_k)$ holds for some $k=0,\cdots,l$. 
Since $\mathcal{V}$ is finite, this procedure always stops for some $l<+\infty$.
Then we have an nonempty cycle $S_0 = \{ h_k,h_{k+1},\cdots,h_l \}$. 
If $h_0$ is contained in $S_0$, then we have the assertion, hence suppose 
$h_0\notin S_0$
Put $A_0:=\min_{h\in S_0}A_h >0$, 
\begin{eqnarray}
P_0 &:=& \{ h\in\mathcal{E} ;\ A_h = A_0\},\nonumber\\
\mathcal{E}_1 &:=& \mathcal{E}\backslash P_0.\nonumber
\end{eqnarray}
Then we have a new quiver $((\mathcal{V},\mathcal{E}_1,s,t))$ and the 
boundary operator $\partial_1:\R^{\mathcal{E}_1} \to \R^\mathcal{V}$.
Now, put $A^{(1)}:=A- A_0\chi_{S_0} \in \R^{\mathcal{E}_1}$, where 
Then each component of $A^{(1)}$ is positive.
Moreover we can see that
\begin{eqnarray}
\partial_1 (A^{(1)}) &=& \sum_{h\in \mathcal{E} \backslash S_0}A_h (s(h) - t(h)) 
+ \sum_{h\in S_0\backslash P_0} (A_h - A_0)(s(h) - t(h)) \nonumber\\
&=& \sum_{h\in \mathcal{E}}A_h (s(h) - t(h)) - \sum_{h\in S_0}A_h (s(h) - t(h))\nonumber\\
&\quad & \quad
+ \sum_{h\in S_0} (A_h - A_0)(s(h) - t(h)) \nonumber\\
&=& \partial (A) - \sum_{h\in S_0}A_0(s(h) - t(h)) \nonumber\\
&=& - A_0\partial (\chi_{S_0}) = 0,\nonumber
\end{eqnarray}
thus $A^{(1)}$ is contained in $(\R_{>0})^{\mathcal{E}_1}\cap {\rm Ker}(\partial_1)$. 
Then we can apply the above procedure for $h_0\in \mathcal{E}_1$ and we can construct 
$S_k$ inductively.
Since $\mathcal{E}$ is finite and $\#\mathcal{E} > \#\mathcal{E}_1> \cdots $ holds, 
there is $k_0$ such that $h_0\in S_{k_0}$.
\end{proof}

\begin{lem}
Let $Q=(\mathcal{V},\mathcal{E},s,t)$ be as above. 
Then $Q'=(\mathcal{V},\mathcal{E}\backslash \{ h\},s,t)$ satisfies 
either $(\mathbf{h}_0(Q'),\mathbf{h}_1(Q')) = (\mathbf{h}_0(Q) + 1,\mathbf{h}_1(Q))$ 
or $(\mathbf{h}_0(Q'),\mathbf{h}_1(Q')) = (\mathbf{h}_0(Q),\mathbf{h}_1(Q) - 1)$ 
for any $h\in \mathcal{E}$.
\label{quiver2}
\end{lem}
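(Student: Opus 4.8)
The plan is to relate the removal of a single edge $h$ to the behaviour of the boundary operators $\partial$ and $\partial^*$, and then invoke the Euler-type identity \eqref{euler}. First I would observe that $\#\mathcal{E}' = \#\mathcal{E} - 1$ while $\#\mathcal{V}$ is unchanged, so \eqref{euler} applied to both $Q$ and $Q'$ gives
\begin{eqnarray}
\mathbf{h}_0(Q') - \mathbf{h}_1(Q') = \mathbf{h}_0(Q) - \mathbf{h}_1(Q) + 1.\nonumber
\end{eqnarray}
Hence it suffices to show that $\mathbf{h}_0$ can increase by at most $1$, or equivalently that $\mathbf{h}_1$ can decrease by at most $1$ and does not increase; combined with the displayed identity this forces exactly one of the two stated alternatives.

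Next I would analyze $\mathbf{h}_1(Q') = \dim\ker\partial'$, where $\partial' : \R^{\mathcal{E}'} \to \R^{\mathcal{V}}$ is the restriction of $\partial$ to the coordinate subspace $\R^{\mathcal{E}'} = \{ A\in\R^{\mathcal{E}} ;\ A_h = 0\}$. Since $\ker\partial' = \ker\partial \cap \R^{\mathcal{E}'}$ is the intersection of $\ker\partial$ with a hyperplane, its dimension is either $\dim\ker\partial$ or $\dim\ker\partial - 1$; that is, $\mathbf{h}_1(Q') \in \{\mathbf{h}_1(Q), \mathbf{h}_1(Q) - 1\}$. If $\mathbf{h}_1(Q') = \mathbf{h}_1(Q)$, the displayed identity gives $\mathbf{h}_0(Q') = \mathbf{h}_0(Q) + 1$; if $\mathbf{h}_1(Q') = \mathbf{h}_1(Q) - 1$, it gives $\mathbf{h}_0(Q') = \mathbf{h}_0(Q)$. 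This already yields the lemma, since $\mathbf{h}_0(Q')$, being the number of connected components of $Q'$, is automatically an integer and the two cases are exactly the two alternatives in the statement.

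One subtlety I would double-check is that the two cases are genuinely exhaustive and mutually exclusive, i.e. that no other combination of changes in $(\mathbf{h}_0,\mathbf{h}_1)$ is possible — but this is immediate from the hyperplane-intersection dichotomy above together with \eqref{euler}. I expect the only mildly delicate point to be the bookkeeping: confirming that the identity \eqref{euler} is being applied to $Q'$ with the correct edge and vertex counts, and that $\ker\partial'$ really is $\ker\partial$ intersected with the hyperplane $\{A_h = 0\}$ rather than something larger. Both are routine once the setup is written out carefully, so the proof is short. Alternatively, one can phrase the $\mathbf{h}_0$ side directly in graph-theoretic terms: deleting the edge $h$ either disconnects the component containing it (raising $\mathbf{h}_0$ by $1$) or leaves connectivity unchanged (because $h$ lay on a cycle, so $\mathbf{h}_1$ drops by $1$); I would mention this as the intuition but carry out the argument via the operators $\partial,\partial^*$ since that keeps everything linear-algebraic and avoids a separate case analysis on graphs.
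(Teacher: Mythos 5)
Your proof is correct, and it takes a genuinely more streamlined route than the paper's. You observe that $\ker\partial'=\ker\partial\cap\{A\in\R^{\mathcal{E}};A_h=0\}$ is the intersection of $\ker\partial$ with a coordinate hyperplane, so $\mathbf{h}_1$ drops by $0$ or $1$, and then the Euler identity \eqref{euler} (which shifts by $1$ since $\#\mathcal{E}'=\#\mathcal{E}-1$) pins down $\mathbf{h}_0(Q')$ in each case; this is a complete, purely linear-algebraic argument. The paper instead splits $\mathcal{E}$ into $\mathcal{E}_1=\{h: A_h=0 \text{ for all } A\in\ker\partial\}$ and $\mathcal{E}_2=\mathcal{E}\setminus\mathcal{E}_1$ --- which is exactly your dichotomy of whether the hyperplane contains $\ker\partial$ or cuts it properly --- but then handles the case $h\in\mathcal{E}_2$ graph-theoretically: it constructs a kernel element nowhere zero on $\mathcal{E}_2$, reorients edges to make it positive, invokes Lemma \ref{quiver1} to conclude $\mathcal{E}_2$ is covered by cycles, and argues that deleting an edge lying on a cycle does not change the number of connected components, so $\mathbf{h}_0$ is unchanged and \eqref{euler} forces $\mathbf{h}_1$ to drop. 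Your version avoids any appeal to Lemma \ref{quiver1} and is shorter; what the paper's longer route buys is the explicit structural fact that every $h\in\mathcal{E}_2$ lies on a cycle, which is reused in the proof of Proposition \ref{topology} (though there the same conclusion could also be extracted from the numerical statement $\mathbf{h}_0(Q')=\mathbf{h}_0(Q)$, as you note in your closing remark). Both arguments are sound.
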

\begin{proof}
Put 
\begin{eqnarray}
\mathcal{E}_1 &:=& \{ h\in \mathcal{E}; A_h 
= 0\ {\rm for\ any\ }A\in {\rm Ker}(\partial)\},\nonumber\\
\mathcal{E}_2 &:=& \mathcal{E}\backslash \mathcal{E}_1.\nonumber
\end{eqnarray}
First of all, we show that there exists 
$A\in (\R_{\neq 0})^{\mathcal{E}_2}\cap {\rm Ker}(\partial_2)$, 
where $\partial_2:\R^{\mathcal{E}_2} \to \R^\mathcal{V}$ is the restriction 
of $\partial$ to $\R^{\mathcal{E}_2} \subset \R^\mathcal{E}$.
By the definition of $\mathcal{E}_2$, we can easy to see that 
${\rm Ker}(\partial_2) = {\rm Ker}(\partial)$ holds, then we can take 
$A^h=\sum_{h'\in \mathcal{E}_2}A^h_{h'}\cdot h'\in {\rm Ker}(\partial_2)$ 
for every $h\in \mathcal{E}_2$ so that 
$A^h_h \neq 0$.
Since $\mathcal{E}_2$ is a finite set, we may write 
$\mathcal{E}_2 = \{ 1,\cdots, \#\mathcal{E}_2\}$.
Let $h_1$ be the minimum number so that $A^1_{h_1} = 0$.
Then put $B^2:= A^1 + a_1A^{h_1}$ for some $a_1\neq 0$. 
Chose $a_1$ sufficiently close to $0$ so that $B^2_h\neq 0$ for all 
$h\le h_1$. 
By defining $B^k$ inductively, finally we obtain 
$A = B^N\in (\R_{\neq 0})^\mathcal{E}_2\cap {\rm Ker}(\partial_2)$ 
for some $N$.

Since $\mathbf{h}_i(Q)$ is independent of the orientation of each edge in $\mathcal{E}$, 
we can replace $h\in \mathcal{E}_2$ by the edge with the 
opposite orientation if $A_h<0$. 
Consequently, we may suppose $A_h >0$ for any $h\in \mathcal{E}_2$ 
without loss of generality.
Hence $\mathcal{E}_2$ is covered by cycles by Lemma \ref{quiver1}.

Next we consider $Q'=(\mathcal{V},\mathcal{E}'=\mathcal{E}\backslash \{ h\},s,t)$.
Let $\partial':=\partial|_{\mathcal{E}'}$ and 
$(\partial')^*:\mathcal{V}\to\mathcal{E}'$ be the adjoint operator.
If $h\in \mathcal{E}_1$, then we can see ${\rm Ker}(\partial) = {\rm Ker}(\partial')$.
In this case $(\mathbf{h}_0(Q'),\mathbf{h}_1(Q')) = (\mathbf{h}_0(Q) + 1,\mathbf{h}_1(Q))$ 
holds by the equation (\ref{euler}). 
If $h\in \mathcal{E}_2$, then $h$ is contained in a cycle $S\subset \mathcal{E}_2$.
Then $s(h)$ and $t(h)$ are also connected in $\mathcal{E}'$, the number of the 
connected components of $Q'$ is equal to that of $Q$. 
Thus we have $(\mathbf{h}_0(Q'),\mathbf{h}_1(Q')) = (\mathbf{h}_0(Q),\mathbf{h}_1(Q) - 1)$.
\end{proof}

Let $L_\alpha$ be a compact connected smooth \sL submanifold of the Calabi-Yau manifold 
$(M,J,\omega,\Omega)$ of ${\rm dim}_{\C}M=m$ for every $\alpha\in\mathcal{V}$.
For every $h\in\mathcal{E}$, suppose $L_{s(h)}$ and $L_{t(h)}$ intersects transversely 
at $p_h\in L_{s(h)} \cap L_{t(h)}$, where $p_h$ is the intersection point of type $1$.
Assume that $p_h\neq p_{h'}$ if $h\neq h'$, and assume that
$\bigcup_{\alpha\in\mathcal{V}}L_\alpha\backslash\{ p_h;h\in\mathcal{E}\}$ is embedded in $M$. 
Let $L_{Q}$ be a differential manifold obtained by taking the connected some 
of $L_{s(h)}$ and $L_{t(h)}$ at $p_h$ for every $h\in \mathcal{E}$.
By Theorem 9.7 of \cite{joyce2003special}, if $(\R_{>0})^\mathcal{E}\cap {\rm Ker}(\partial)$ 
is nonempty, there exists a family of compact smooth \sL submanifolds $\{\tilde{L}_t\}_{0<t<\delta}$ 
which converges to $\bigcup_{\alpha\in\mathcal{V}}$ as $t\to 0$ in the sense of current.
Here, $\tilde{L}_t$ is diffeomorphic to $L_{Q}$. 

Now we can replace the assumption that 
$(\R_{>0})^\mathcal{E}\cap {\rm Ker}(\partial)$ 
is nonempty can be replaced by that $\mathcal{E}$ is covered by cycles.
Consequently, the proof of Theorem \ref{gluing} is completed by the next proposition.

\begin{prop}
If $Q=(\mathcal{V},\mathcal{E},s,t)$ is a connected quiver, then
$L_{Q}$ is diffeomorphic to 
\begin{eqnarray}
L_1\# L_2\# \cdots \# L_A\# N (S^1\times S^{m-1}),\nonumber
\end{eqnarray}
where $\mathcal{V}=\{ 1,\cdots, A\}$ and $N={\rm dim}\ {\rm Ker}(\partial)$, 
and the orientation of each $L_\alpha$ is determined by ${\rm Re}\Omega|_{L_\alpha}$.
\label{topology}
\end{prop}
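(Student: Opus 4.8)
The plan is to build $L_Q$ by induction on the number of edges of $Q$, tracking how the diffeomorphism type changes each time an edge is added, and matching this against the combinatorial quantity $N = \dim\operatorname{Ker}(\partial)$ via Lemma~\ref{quiver2}. First I would recall the local effect of the connected sum at a type $1$ intersection point: when two special Lagrangian submanifolds meet transversely at a point of type $1$, the connected sum along Lawlor-neck replaces a neighborhood of the point by a handle, so that forming the connected sum of $L$ and $L'$ at one such point yields the ordinary smooth connected sum $L\# L'$ (here the orientation conventions fixed by $\operatorname{Re}\Omega|_{L_\alpha}$ ensure the gluing is orientation-compatible, which is exactly what is needed for the topological connected sum to make sense). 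When instead the two branches meeting at $p_h$ already lie in the \emph{same} connected component of the partially glued manifold, attaching the neck at $p_h$ adds a $1$-handle to that connected component, which for an $m$-manifold ($m\ge 3$) with $m=\dim_\C M$, i.e. real dimension $2m$… — more precisely $L_\alpha$ has real dimension $m$ here, so the handle is an $(S^1\times S^{m-1})$-summand, changing the component $P$ into $P\#(S^1\times S^{m-1})$.

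Next I would set up the induction. Order $\mathcal{E}=\{h_1,\dots,h_E\}$ and let $Q^{(j)}=(\mathcal{V},\{h_1,\dots,h_j\},s,t)$, with $Q^{(0)}$ the edgeless quiver, so $L_{Q^{(0)}}=L_1\sqcup\cdots\sqcup L_A$ and $L_{Q^{(E)}}=L_Q$. At each step, passing from $Q^{(j-1)}$ to $Q^{(j)}$ adds the edge $h_j$, and by Lemma~\ref{quiver2} exactly one of two things happens: either $\mathbf{h}_0$ drops by one and $\mathbf{h}_1$ is unchanged, or $\mathbf{h}_0$ is unchanged and $\mathbf{h}_1$ increases by one. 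In the first case $s(h_j)$ and $t(h_j)$ lie in different components of $L_{Q^{(j-1)}}$ and the connected sum along $p_{h_j}$ merges these two components by an ordinary connected sum; in the second case they lie in the same component and the operation adds an $(S^1\times S^{m-1})$-summand to it. Thus at every stage the number of $(S^1\times S^{m-1})$-summands equals $\mathbf{h}_1(Q^{(j)})$, and the underlying "tree part" is a single iterated connected sum of the $L_\alpha$'s over whatever components have been merged. Since $Q=Q^{(E)}$ is connected, $\mathbf{h}_0(Q)=1$, all of $L_1,\dots,L_A$ get merged into one connected sum $L_1\#\cdots\#L_A$, and the number of handle summands is $\mathbf{h}_1(Q)=N$, giving $L_Q\cong L_1\#\cdots\#L_A\#N(S^1\times S^{m-1})$.

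The main obstacle I expect is not the bookkeeping but the first step: justifying carefully that forming the special Lagrangian connected sum at a type $1$ point has the claimed topological effect — a smooth connected sum when the branches are in distinct components, and a $1$-handle attachment when they are in the same component. This requires inspecting the Lawlor neck / the explicit local model used in Theorem~9.7 of \cite{joyce2003special}: near $p_h$ one has $v\circ\iota_*(T_{p_+}L)=\R^m$ and $v\circ\iota_*(T_{p_-}L)=\R^m_\varphi$ with $\varphi_1+\cdots+\varphi_m=\pi$ (type $1$), and the asymptotically-conical special Lagrangian $L$-neck interpolating between these two planes is, topologically, a cylinder $S^{m-1}\times\R$. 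Excising small disks around the two local sheets and gluing in this cylinder is precisely the connected-sum construction (orientations matching because of the $\operatorname{Re}\Omega$ convention and condition (iii) of Theorem~\ref{joyce}); when the two sheets belong to one component this is the standard description of attaching a $1$-handle, yielding the $S^1\times S^{m-1}$ factor. Once this local statement is in hand, the rest is the straightforward induction sketched above together with the identity $\mathbf{h}_0(Q)-\mathbf{h}_1(Q)=\#\mathcal{V}-\#\mathcal{E}$ from~(\ref{euler}).
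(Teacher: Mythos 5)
Your proposal is correct and follows essentially the same route as the paper: the paper runs the same induction in the opposite direction, deleting edges one at a time down to the edgeless quiver and using Lemma~\ref{quiver2} to sort each deletion into the two cases (disconnecting edge $\Rightarrow$ ordinary connected sum of the two pieces; non-disconnecting edge $\Rightarrow$ the self-gluing of two balls in one component, which it identifies explicitly as an $S^1\times S^{m-1}$ summand), then counts $\#\mathcal{V}-1$ occurrences of the former and $\mathbf{h}_1(Q)=N$ of the latter via (\ref{euler}). Your key local step and the bookkeeping match the paper's argument.
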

\begin{proof}
Let $Q=(\mathcal{V},\mathcal{E},s,t)$ be a connected quiver 
and $Q'=(\mathcal{V},\mathcal{E}',s|_{\mathcal{E}'},t|_{\mathcal{E}'})$, 
where $\mathcal{E}'=\mathcal{E}\backslash \{ h\}$.
Let $\mathcal{E}_1,\mathcal{E}_2$ be as in the proof of 
Lemma \ref{quiver2}.

If $h\in \mathcal{E}_1$, then the quiver $Q'$ consists of two 
connected components 
$Q_1 = (\mathcal{W}_1,\mathcal{F}_1,s|_{\mathcal{F}_1},t|_{\mathcal{F}_1})$ 
and $Q_2 = (\mathcal{W}_2,\mathcal{F}_2,s|_{\mathcal{F}_2},t|_{\mathcal{F}_2})$, where 
$\mathcal{V} = \mathcal{W}_1\sqcup \mathcal{W}_2$ and 
$\mathcal{F}_i = \mathcal{E}'\cap (s^{-1}(\mathcal{W}_i)\cup t^{-1}(\mathcal{W}_i))$.
Then we can see that $L_Q = L_{Q_1}\# L_{Q_2}$.

If $h\in \mathcal{E}_2$, then $Q'=(\mathcal{V},\mathcal{E}',s|_{\mathcal{E}'},t|_{\mathcal{E}'})$ is also 
connected, 
hence $L_Q$ is constructed from $L_{Q'}$ in the following way.
Take any distinct points $p_+,p_-\in L_{Q'}$ and their neighborhood 
$B_{p_\pm}\subset L_{Q'}$ so that $B_{p_0}\cap B_{p_1}$ is empty and 
$B_{p_\pm}$ are diffeomorphic to the Euclidean unit ball. 
Now we have a polar coordinate $(r_\pm,\Theta_\pm) \in B_{p_\pm}\backslash \{ p_\pm\}$, where 
$r_\pm \in (0,1)$ is the distance from $p_\pm$, and $\Theta_\pm\in S^{m-1}$.
By taking a diffeomorphism $\psi: (r,\Theta) \mapsto (1-r,\varphi (\Theta))$, we can glue 
$B_{p_+}\backslash \{ p_+\}$ and $B_{p_-}\backslash \{ p_-\}$, then obtain 
$L_Q$. 
Here, $\varphi:S^{m-1} \to S^{m-1}$ is a diffeomorphism 
which reverse the orientation. 
Note that the differentiable structure of $L_Q$ is independent of 
the choice of $p_\pm$, $B_{p_\pm}$ and $\varphi$. 
Therefore we may suppose $p_+$ and $p_-$ is contained in 
an open subset $U\subset L_Q$, where $U= B(0,10)$ and 
$B_{p_\pm} = B(\pm 5, 1)$, respectively.
Here $B(x,r) = \{ x'\in\R^m;\ \| x' - x\| < r\}$.
Then $(U\backslash \{ (1,0),(-1,0)\} )/\psi$ is diffeomorphic to 
$S^1\times S^{m-1}\backslash \{ {\rm pt.}\}$, hence 
$L_Q$ is diffeomorphic to $L_{Q'}\# S^1\times S^{m-1}$.

By repeating these two types of procedures, we finally obtain a quiver 
$Q''=(\mathcal{V},\emptyset,s,t)$, and we have 
$(\mathbf{h}_0(Q''),\mathbf{h}_1(Q'')) = (\#\mathcal{V},0)$.
By counting $(\mathbf{h}_0,\mathbf{h}_1)$ on each step,
it turns out that we have to follow the former procedures $\#\mathcal{V} -1$ times 
and the latter procedures $\mathbf{h}_1(Q)$ times until 
we reach $Q''$.
Therefore we obtain the assertion by considering the procedures 
inductively.
\end{proof}

\section{The construction of compact special Lagrangian submanifolds in $X(u,\lambda)$}\label{sec6}
Here we construct examples of 
compact special Lagrangian submanifolds in $X(u,\lambda)$, 
using Theorem \ref{gluing}.
We construct a one parameter family of 
compact special Lagrangian submanifolds 
which degenerates to the union $\bigcup_i L_i$ of 
some $\sigma_i$-holomorphic Lagrangian submanifolds $L_i$ 
in Subsection \ref{5.1}.

Let $X(u,\lambda)$ be a smooth toric \hK manifold. 
\begin{definition}
{\rm We call $\triangle \subset \ImH \otimes (\mathbf{t}^n)^*$} a $\sigma$-Delzant polytope 
{\rm if it is a compact convex set in 
\begin{eqnarray}
V(q,\sigma):=q + \sigma\otimes (\mathbf{t}^n)^* \nonumber
\end{eqnarray}
for some $q\in \ImH \otimes (\mathbf{t}^n)^*$, 
and the boundary of $\triangle$ in $V(q,\sigma)$ satisfies
\begin{eqnarray}
\partial \triangle = \triangle \cap \bigg( \bigcup_{k=1}^NH_k\bigg).\nonumber
\end{eqnarray}
}
\end{definition}
It is easy to see that $L_{\triangle} := \mu_{\lambda}^{-1}(\triangle)$ is $\sigma$-\hL 
if it is smooth.
Since $T^n$-action is closed on $L_{\triangle}$, 
we may regard $(L_{\triangle},I_{\lambda,1}^\sigma|_{L_{\triangle}})$ as a toric variety, 
equipped with a K\"ahler form $\omega_{\lambda,1}^\sigma|_{L_{\triangle}}$ and a K\"ahler moment map 
$\mu_{\lambda,1}^\sigma: L_{\triangle} \to (\mathbf{t}^n)^*$.
In particular, $L_{\triangle}$ is an oriented manifold whose orientation 
is induced naturally from $I_{\lambda,1}^\sigma$. 
We denote by $\overline{L}_{\triangle}$ the oriented manifold diffeomorphic to 
$L_{\triangle}$ with the opposite orientation.
By the assumption $X(u,\lambda)$ is smooth, $u$ and $\lambda$ satisfies 
$(*1)(*2)$ of Theorem \ref{smooth}, then it is easy to see that 
$\triangle$ is a Delzant polytope in the ordinary sense, 
consequently $L_{\triangle}$ turns out to be a smooth toric variety.

\begin{definition}
{\rm For $\alpha=0,1$, let $\triangle_\alpha$ be a $\sigma(\theta_\alpha)$-Delzant polytope. 
Put \begin{eqnarray}
Q(r) := \{ (t_1,\cdots, t_n)\in (\mathbf{t}^n)^*;
\ t_1\ge 0,\cdots, t_n\ge 0, t_1^2+\cdots + t_n^2 <r^2\}.\nonumber
\end{eqnarray}
Then $\triangle_0$ and $\triangle_1$ are said to be} intersecting standardly with 
angle $\theta$ {\rm if 
$\triangle_0 \cap \triangle_1 = \{ q\}$ and there are $\psi\in GL_n\Z$, $\theta_0\in\R$ and 
sufficiently small $r >0$ such that
\begin{eqnarray}
\psi(\triangle_0 -q)\cap B(r) &=& \sigma(\theta_0)\otimes Q(r),\nonumber\\
\psi(\triangle_1 -q)\cap B(r) &=& \sigma(\theta_0 + \theta)\otimes Q(r),\nonumber
\end{eqnarray}
where $\psi : (\Z^n)^* \to (\Z^n)^*$ extends to $\ImH \otimes (\mathbf{t}^n)^* \to \ImH \otimes (\mathbf{t}^n)^*$
naturally. }
\label{def.intersection}
\end{definition}

For $m\in\Z_{>0}$, let 
\begin{eqnarray}
d_m(l_1,l_2) := \min \{ |l_1 - l_2 + mk|;\ k\in\Z \},\nonumber
\end{eqnarray}
for $l_1,l_2\in \Z$, which induces a distance function on $\Z /m\Z$.

The main result of this article is described as follows.
\begin{thm}
Let $X(u,\lambda)$ be a smooth toric \hK manifold, 
and $\triangle_k$ be a $\sigma(k\pi/n)$-Delzant polytope 
for each $k = 1,\cdots, 2n$.
Assume that $\triangle_k \cap \triangle_l = \emptyset$ if $d_{2n}(k,l)>1$, 
and $\triangle_k$ and $\triangle_{k+1}$ intersecting standardly with 
angle $\pi/n$.
Then there exists a family of compact special Lagrangian submanifolds 
$\{ \tilde{L}_t \}_{0<t<\delta}$ which converges $\bigcup_{k=1}^{2n}L_{\triangle_k}$ 
as $t\to 0$ in the sense of current.
Moreover, $\tilde{L}_t$ is diffeomorphic to 
$L_{\triangle_1}\# 
\overline{L}_{\triangle_2}\# \cdots L_{\triangle_{2n-1}}\# 
\overline{L}_{\triangle_{2n}}\# (S^1\times S^{2n-1})$.
\label{main}
\end{thm}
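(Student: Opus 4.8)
The plan is to verify that the hypotheses of Theorem \ref{gluing} are met by the data $(\mathcal{V},\mathcal{E},s,t)$ built from the polytopes, and then to identify the resulting glued manifold via Proposition \ref{topology}. First I would set $\mathcal{V}=\{1,\dots,2n\}$ with $L_k := L_{\triangle_k}$, and introduce one edge $h_k\in\mathcal{E}$ for each pair $(k,k+1)$ cyclically (indices mod $2n$), with $s(h_k)=k$ and $t(h_k)=k+1$. By the hypothesis $\triangle_k\cap\triangle_l=\emptyset$ whenever $d_{2n}(k,l)>1$, the only possible nonempty intersections are between consecutive polytopes, and since $\triangle_k$ and $\triangle_{k+1}$ intersect standardly with angle $\pi/n$, Definition \ref{def.intersection} gives $\triangle_k\cap\triangle_{k+1}=\{q_k\}$, a single point. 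Pulling back by $\mu_\lambda$, the submanifolds $L_k$ and $L_{k+1}$ meet only at the fixed point $p_k:=\mu_\lambda^{-1}(q_k)$, which is a torus-fixed point because in the standard local model $\sigma(\theta_0)\otimes Q(r)$ the apex $q$ of the cone is the image of a fixed point (it is the vertex where $n$ of the hyperplanes $H_j$ meet). So $\mathcal{V}$, $\mathcal{E}$ and the intersection pattern match the combinatorial setup preceding Theorem \ref{gluing}.

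Next I would check transversality and the type-$1$ condition at each $p_k$. Working in the local model furnished by Proposition \ref{3.2}, the tangent spaces $V_\pm = dF_{p_k}(T_{p_k}L_k)$ and $dF_{p_k}(T_{p_k}L_{k+1})$ are $T^n$-invariant $\sigma$-holomorphic Lagrangian subspaces whose moment images are the cones in the standard position dictated by Definition \ref{def.intersection} (after applying the lattice automorphism $\psi$, which corresponds to a choice of identification in the local model). By Propositions \ref{model} and \ref{apex} these subspaces are exactly $\bigoplus_i V_i(\sigma(\theta_\pm))$ with matching signs, so Proposition \ref{angle} applies: the characterizing angles between $(L_k,p_k)$ and $(L_{k+1},p_k)$ are all equal to $((k+1)\pi/n - k\pi/n)/2 = \pi/(2n)$, with multiplicity $2n$. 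Since $m = {\rm dim}_\C X(u,\lambda) = 2n$ here, the sum of the characterizing angles is $2n\cdot\pi/(2n)=\pi$, hence $p_k$ is an intersection point of type $1$, and transversality is immediate because none of the angles vanishes. Thus all hypotheses of Theorem \ref{gluing} other than "covered by cycles" hold.

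It remains to observe that $\mathcal{E}$ is covered by cycles: by construction $\mathcal{E}=\{h_1,\dots,h_{2n}\}$ is itself a single cycle $h_1,h_2,\dots,h_{2n}$ with $t(h_k)=s(h_{k+1})$ and $t(h_{2n})=s(h_1)$, so every edge lies on this cycle. Theorem \ref{gluing} then yields the family $\{\tilde L_t\}_{0<t<\delta}$ of compact special Lagrangian submanifolds with $\lim_{t\to 0}\tilde L_t = \bigcup_k L_{\triangle_k}$ in the sense of current. Finally, to get the diffeomorphism type I would invoke Proposition \ref{topology}: the quiver $Q$ is connected (the single cycle connects all $2n$ vertices) with $\#\mathcal{V}=2n$ and $\#\mathcal{E}=2n$, so by \eqref{euler} and ${\rm dim\,Ker}(\partial^*)=1$ we get $N={\rm dim\,Ker}(\partial)= \#\mathcal{E}-\#\mathcal{V}+1 = 1$. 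Hence $\tilde L_t \cong L_1\#L_2\#\cdots\#L_{2n}\#(S^1\times S^{2n-1})$, where each $L_\alpha$ carries the orientation induced by ${\rm Re}\,\Omega|_{L_\alpha}$. The orientation from ${\rm Re}\,\Omega$ on a $\sigma(k\pi/n)$-holomorphic Lagrangian $L_{\triangle_k}$ agrees with the complex orientation from $I_{\lambda,1}^{\sigma(k\pi/n)}$ when $k$ is odd and is the opposite when $k$ is even (this is the sign bookkeeping in the identity ${\rm Im}(\omega_2+\sqrt{-1}\omega_3)^n|_{L_{\triangle_k}}=0$ forcing the calibrating form to be $\pm{\rm Re}(\omega_2+\sqrt{-1}\omega_3)^n$), which gives $L_{\triangle_1}\#\overline{L}_{\triangle_2}\#\cdots\#L_{\triangle_{2n-1}}\#\overline{L}_{\triangle_{2n}}\#(S^1\times S^{2n-1})$ as claimed.

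The main obstacle I anticipate is the bookkeeping in the second paragraph: one must be careful that the lattice automorphism $\psi$ appearing in Definition \ref{def.intersection} can be realized compatibly inside the local model of Proposition \ref{3.2} so that Propositions \ref{model} and \ref{apex} genuinely pin down $V_\pm$ to the standard form $\bigoplus_i V_i(\sigma(\theta_\pm))$ with \emph{equal} sign vectors $\varepsilon_i=\varepsilon'_i=+1$ on both sides — this is exactly what makes the characterizing-angle computation of Proposition \ref{angle} applicable rather than a more complicated mixed-sign situation. The orientation comparison in the last step is also a nontrivial point that needs the explicit relation $\omega^m/m! = (-1)^{m(m-1)/2}(\sqrt{-1}/2)^m\Omega\wedge\overline{\Omega}$ together with the dependence of $(\omega_2+\sqrt{-1}\omega_3)^n$ on the \hK rotation angle.
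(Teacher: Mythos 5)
Your proposal is correct and follows essentially the same route as the paper's proof: apply Theorem \ref{gluing} to the cyclic quiver on $\{1,\dots,2n\}$, use Propositions \ref{model}, \ref{apex} and \ref{angle} to get characterizing angles $\pi/(2n)$ with multiplicity $2n$ (hence type $1$), and then identify the topology via Proposition \ref{topology} with $N=1$ and the orientation comparison $\Omega|_{L_{\triangle_k}}=(-1)^k(\omega^{\sigma(k\pi/n)})^n|_{L_{\triangle_k}}$. You simply spell out the quiver bookkeeping and the covered-by-cycles check that the paper leaves implicit.
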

\begin{proof}
We apply Theorem \ref{gluing}.
By combining Propositions \ref{model} and \ref{angle}, 
we can see that the characterizing angles between $L_{\triangle_k}$ and $L_{\triangle_{k+1}}$ 
are $\frac{\pi}{2n}$ with multiplicity $2n$. 
Then the intersection point $L_{\triangle_k}\cap L_{\triangle_{k+1}}$ is of type $1$.

Next we consider the topology of $\tilde{L}_t$. 
When we take a connected sum, we should determine the orientation of 
$L_{\triangle_k}$ uniformly by the calibration ${\rm Re}\Omega$, 
where $\Omega=(\omega_{\lambda,2} + \sqrt{-1}\omega_{\lambda,3})^n$.
Now $\Omega|_{L_{\triangle_k}} = (-1)^k(\omega_1^{\sigma(k\pi/n)})^n|_{L_{\triangle_k}}$ 
holds, 
therefore $\tilde{L}_t$ is diffeomorphic to 
\begin{eqnarray}
L_{\triangle_1}\# 
\overline{L}_{\triangle_2}\# \cdots L_{\triangle_{2n-1}}\# 
\overline{L}_{\triangle_{2n}}\# (S^1\times S^{2n-1}).\nonumber
\end{eqnarray}
\end{proof}

\subsection{Example $(1)$}\label{5.1}
Let 
\begin{eqnarray}
u = (I_n\ I_n\ \cdots\ I_n) \in {\rm Hom}(\Z^{2n^2}, \Z^n) \nonumber
\end{eqnarray}
and $\lambda = (\lambda_{1,1},\cdots,\lambda_{1,n},\lambda_{2,1},\cdots,\lambda_{2,n},\cdots,\lambda_{2n,1},\cdots,\lambda_{2n,n})$, 
where $I_n$ is the identity matrix.
Then $X(u,\lambda)$ is smooth if $\lambda_{k,\alpha} = \lambda_{l,\alpha}$ holds only if $k=l$.
We assume this condition and that $-\lambda_{k,\alpha} = (0, \rho_{k,\alpha}) \in \{ 0\} \oplus \C$ 
holds for every $k,\alpha$, where $\ImH$ is identified with $\R \oplus \C$.
Moreover we suppose that 
\begin{eqnarray}
\arg (\rho_{k + 1,\alpha} - \rho_{k,\alpha} ) = \theta_0 + \frac{n+1}{n}k\pi
\end{eqnarray}
for some $\theta_0 \in \R$.
Note that $X(u,\lambda)$ is a direct product of multi Eguchi-Hanson spaces.

Next we put $q_k:= -(\lambda_{k,1},\cdots,\lambda_{k,n})\in \ImH\otimes (\mathbf{t}^{n})^*$, 
 and 
\begin{eqnarray}
\square_k &:=& q_k + (0, e^{\sqrt{-1}(\theta_0 + \frac{n+1}{n}k\pi) })\otimes 
\square(r_{k,1},\cdots, r_{k,n})\nonumber\\
&\subset& V(q_k, \sigma(\theta_0 + \frac{n+1}{n}k\pi)), \nonumber
\end{eqnarray}
where $r_{k,\alpha} = |\rho_{k + 1,\alpha} - \rho_{k,\alpha}|$, and 
a hyperrectangle $\square(r_{1},\cdots, r_{n}) \subset (\mathbf{t}^{n})^*\cong \R^n$ is defined by 
\begin{eqnarray}
\square(r_{1},\cdots, r_{n}) := 
\{ (t_1,\cdots, t_n) \in \R^n ;\ 0\le t_1 \le r_1,\ \cdots,\ 0\le t_n \le r_n\}\nonumber
\end{eqnarray}

Let $H_{k,\alpha} = \{ y\in \ImH\otimes (\mathbf{t}^{n})^* ;\ y_k + \lambda_{k,\alpha} = 0 \}$.
Then it is easy to see that $\square_k$ is compact,  convex 
and 
\begin{eqnarray}
\partial \square_k \subset 
\bigcup_{\alpha = 1}^{2n}(H_{k,\alpha}\cup H_{k+1,\alpha}).\nonumber
\end{eqnarray}
Therefore, $\square_k$ is a $\sigma(\theta_0 + (n+1)k\pi/n)$-Delzant polytope
if 
\begin{eqnarray}
\square_k \cap \bigcup_{\alpha, k}H_{k,\alpha} \subset \partial \square_k
\label{inclusion}
\end{eqnarray}
holds.

Next we study the intersection of $\square_{k-1}$ and $\square_k$. 
We can check that $\square_{k-1} \cap \square_k = \{ q_k\}$ and 
every element in $\square_{k-1}$ satisfies
\begin{eqnarray}
&\ & q_{k-1} + (0, e^{\sqrt{-1}(\theta_0 + \frac{n+1}{n}(k-1)\pi )})\otimes (t_1,\cdots, t_n) \nonumber\\
&=& q_{k-1} + (0, e^{\sqrt{-1}(\theta_0 + \frac{n+1}{n}(k-1)\pi )})\otimes (r_{k-1,1},\cdots, r_{k-1,n}) \nonumber\\
&\ & - (0, e^{\sqrt{-1}(\theta_0 + \frac{n+1}{n}(k-1)\pi )})\otimes (r_{k-1,1} - t_1,\cdots, r_{k-1,n}-t_n)\nonumber\\
&=& q_{k-1} + (\lambda_{k-1,1} - \lambda_{k,1},\cdots, \lambda_{k-1,n} - \lambda_{k,n}) \nonumber\\
&\ & 
+ (0, e^{\sqrt{-1}(\theta_0 + \frac{n+1}{n}(k-1)+1)\pi })\otimes (r_{k-1,1} - t_1,\cdots, r_{k-1,n}-t_n)\nonumber\\
&=& q_k + (0, e^{\theta_0 + \sqrt{-1}\frac{(n+1)k -1}{n}\pi })\otimes (r_{k-1,1} - t_1,\cdots, r_{k-1,n}-t_n).\nonumber
\end{eqnarray}
Therefore, $\square_{k-1}$ and $\square_k$ are intersecting standardly with 
angle $\pi/n$.
Of course, the same argument goes well for $\square_{2n}$ and $\square_1$.

To apply Theorem \ref{main}, it suffices to show that 
$\square_k \cap \square_l$ is empty if $d_{2n}(k,l)>1$. 
However, this condition does not hold in general, 
accordingly we need to take $\rho_{k,\alpha}$ well.
Unfortunately, the author cannot find the good criterion 
for $\rho_{k,\alpha}$ satisfying the above condition.
Here we show one example of $\rho_{k,\alpha}$ which satisfies the 
assumption of Theorem \ref{main}.

First of all, take $a_1,\cdots,a_n\in \R$ so that every $a_m$ is larger than 
$1$, and put 
\begin{eqnarray}
\rho_{2m-1} &:=& e^{\sqrt{-1}\frac{2(m-1)}{n}\pi} 
+ a_m(e^{\sqrt{-1}\frac{2m}{n}\pi} - e^{\sqrt{-1}\frac{2(m-1)}{n}\pi}),\nonumber\\
\rho_{2m} &:=& e^{\sqrt{-1}\frac{2(m+1)}{n}\pi} 
+ a_m(e^{\sqrt{-1}\frac{2m}{n}\pi} - e^{\sqrt{-1}\frac{2(m+1)}{n}\pi})\nonumber
\end{eqnarray}
for each $m=1,\cdots, n$.
Denote by $\mathbf{l}_k\subset \C$ the segment connecting 
$\rho_k$ and $\rho_{k+1}$.
Then we can easily see that $\mathbf{l}_{k-1} \cap \mathbf{l}_{k} = \{ \rho_k \}$ and 
\begin{eqnarray}
\arg(\rho_{k+1} -\rho_{k}) = \frac{n-2}{2n}\pi + \frac{n+1}{n}k\pi.\nonumber
\end{eqnarray}
Note that we can regard $k\in\Z /2n \Z$ and $m\in \Z/ n\Z$.

\begin{prop}
Let $\rho_1,\cdots,\rho_{2n}$ be as above. 
If every $a_k-1$ is sufficiently small, then 
$\mathbf{l}_{2m-1} \cap \mathbf{l}_{k}$ are empty 
for all $m=1,\cdots,n$ and $k=1,\cdots,2n$ with $d_{2n}(k,2m-1)>1$.
\label{disjoint}
\end{prop}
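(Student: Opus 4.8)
The plan is to set $a_k = 1 + \varepsilon$ for all $k$ and show that in the limit $\varepsilon \to 0$ the segments $\mathbf{l}_k$ degenerate to a configuration in which $\mathbf{l}_{2m-1}$ and $\mathbf{l}_k$ are disjoint (or meet only at a shared endpoint when $d_{2n}(k,2m-1)=1$) whenever $d_{2n}(k,2m-1)>1$; then an open-condition/compactness argument upgrades this to small positive $\varepsilon$. First I would record the limiting positions: at $\varepsilon = 0$ we have $\rho_{2m-1}\big|_{\varepsilon=0} = e^{\sqrt{-1}\,2m\pi/n}$ and $\rho_{2m}\big|_{\varepsilon=0} = e^{\sqrt{-1}\,2m\pi/n}$, so each $\rho_k$ collapses onto one of the $n$-th roots of unity scaled suitably, with $\rho_{2m-1}$ and $\rho_{2m}$ both landing on the vertex $\zeta_m := e^{\sqrt{-1}\,2m\pi/n}$. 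Consequently the limiting segment $\mathbf{l}_{2m-1}$ (from $\rho_{2m-1}$ to $\rho_{2m}$) degenerates to the single point $\zeta_m$, while $\mathbf{l}_{2m}$ (from $\rho_{2m}$ to $\rho_{2m+1}$) degenerates to the chord of the regular $n$-gon joining $\zeta_m$ to $\zeta_{m+1}$. So in the limit the odd segments are the vertices and the even segments are the edges of a convex regular $n$-gon.

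The key geometric step is then: a vertex $\zeta_m$ of a convex regular $n$-gon lies on an edge $[\zeta_j,\zeta_{j+1}]$ only if the vertex is an endpoint of that edge, i.e. only if $j \equiv m$ or $j \equiv m-1 \pmod n$; and two distinct vertices never coincide. Translating back through the indexing $k \leftrightarrow 2m-1$, one checks that $\mathbf{l}_{2m-1}^{\,0} \cap \mathbf{l}_k^{\,0} \neq \emptyset$ forces $d_{2n}(k, 2m-1) \le 1$. I would verify this case by case on the parity of $k$: if $k = 2j-1$ is odd, $\mathbf{l}_k^0 = \{\zeta_j\}$, and $\zeta_m = \zeta_j$ iff $m\equiv j$, i.e. $k = 2m-1$, giving $d_{2n} = 0$; if $k = 2j$ is even, $\mathbf{l}_k^0 = [\zeta_j,\zeta_{j+1}]$, and $\zeta_m \in [\zeta_j,\zeta_{j+1}]$ iff $j\equiv m$ or $j\equiv m-1$, i.e. $k = 2m$ or $k = 2m-2 = 2(m-1)$, both of which satisfy $d_{2n}(k,2m-1)=1$.

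Finally I would argue the perturbation. The set of pairs $(k,\ell)$ with $d_{2n}(k,2m-1) > 1$ is finite; for each such pair the two \emph{closed} segments $\mathbf{l}_{2m-1}$ and $\mathbf{l}_k$ are disjoint compact sets at $\varepsilon = 0$, hence have positive distance $\delta_{k,m} > 0$. Since each endpoint $\rho_j = \rho_j(\varepsilon)$ depends continuously (indeed affinely) on $\varepsilon$, the Hausdorff distance between $\mathbf{l}_j(\varepsilon)$ and $\mathbf{l}_j(0)$ tends to $0$ with $\varepsilon$; choosing $\varepsilon$ small enough that every segment moves by less than $\tfrac12\min_{k,m}\delta_{k,m}$ keeps all the relevant pairs disjoint. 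One mild subtlety to dispatch: for the pairs with $d_{2n}(k,2m-1)=1$ the limiting segments share the endpoint $\rho_k$, so I cannot use a crude distance estimate there — but those pairs are \emph{excluded} from the claim, so nothing is needed. The main obstacle is really bookkeeping: keeping the two index systems $k \in \Z/2n\Z$ and $m \in \Z/n\Z$ straight, and confirming that the stated formula $\arg(\rho_{k+1}-\rho_k) = \tfrac{n-2}{2n}\pi + \tfrac{n+1}{n}k\pi$ is consistent with the limiting polygon picture (it encodes that consecutive edges of the $n$-gon turn by the exterior angle $2\pi/n$, with the odd/even alternation contributing the extra $\pi$ shifts); once the limit configuration is correctly identified the rest is routine.
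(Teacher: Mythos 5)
Your proof is correct, but it takes a genuinely different route from the paper's. You degenerate to the limit $a_k=1$, where each odd segment $\mathbf{l}_{2m-1}$ collapses to the vertex $\zeta_m=e^{2\pi\sqrt{-1}m/n}$ of a regular $n$-gon and each even segment becomes an edge, observe that the pairs excluded by $d_{2n}(k,2m-1)>1$ are exactly those whose limits touch, and then invoke continuity of the endpoints plus compactness to keep the remaining pairs disjoint for small perturbations. The paper instead produces an explicit separating hyperplane: it shows that the linear functional $z\mapsto{\rm Re}(ze^{-\sqrt{-1}\,2m\pi/n})$ is constant equal to $(a_m-1)(1-\cos\frac{2\pi}{n})+1>1$ on $\mathbf{l}_{2m-1}$, while it is bounded above by $(a_l-1)(1+\cos\frac{2\pi}{n})+\cos\frac{2\pi}{n}<1$ at every relevant endpoint $\rho_k$, which yields the concrete threshold $a_l-1<(1-\cos\frac{2\pi}{n})/(1+\cos\frac{2\pi}{n})$. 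Your soft argument is shorter on computation and makes the geometry transparent, at the cost of not producing an explicit bound on $a_k-1$; the paper's argument is quantitative. Two small points to tidy up in your write-up: the proposition allows the $a_k$ to be perturbed independently, so you should phrase the final step in terms of the tuple $(a_1,\dots,a_n)$ near $(1,\dots,1)$ rather than fixing $a_k=1+\varepsilon$ uniformly (your continuity argument already covers this, since each $\rho_j$ is affine in the relevant $a_m$); and the vertex-on-edge step implicitly uses strict convexity of the regular $n$-gon, which requires $n\ge 3$ — for $n=2$ the even-$k$ cases are vacuous since $d_4(2j,2m-1)=1$ always, so the claim still holds, but this is worth a sentence.
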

\begin{proof}
Let ${\rm Re}:\C \to \R$ be the projection given by taking 
the real part.
It suffices to show that 
${\rm Re}(\mathbf{l}_{2m-1}e^{-\sqrt{-1}\frac{2m}{n}\pi}) \cap 
{\rm Re}(\mathbf{l}_{k}e^{-\sqrt{-1}\frac{2m}{n}\pi})$ 
is empty under the given assumptions.
Let $\rho_{2m-1} + t(\rho_{2m}-\rho_{2m-1})\in \mathbf{l}_{2m-1}$.
Then we can check that
\begin{eqnarray}
{\rm Re}(\rho_{2m-1}e^{-\sqrt{-1}\frac{2m}{n}\pi} 
+ t(\rho_{2m}-\rho_{2m-1})e^{-\sqrt{-1}\frac{2m}{n}\pi}) 
= (1-a_m)\cos \frac{2\pi}{n} + a_m,\nonumber
\end{eqnarray}
which implies ${\rm Re}(\mathbf{l}_{2m-1}e^{-\sqrt{-1}\frac{2m}{n}\pi}) 
= \{ -(a_m-1)\cos \frac{2\pi}{n} + a_m\}$.
If we can see that 
\begin{eqnarray}
{\rm Re}(\rho_k e^{-\sqrt{-1}\frac{2m}{n}\pi}) < -(a_m-1)\cos \frac{2\pi}{n} + a_m\label{ineq1}
\end{eqnarray}
for all $k$, we have the assertion.
Since 
\begin{eqnarray}
{\rm Re}(\rho_{2l} e^{-\sqrt{-1}\frac{2m}{n}\pi}) &=& -(a_{l}-1)\cos(\frac{2(1 + l-m)}{n}\pi) \nonumber\\
&\ &\quad + a_{l}\cos(\frac{2(l-m)}{n}\pi),\nonumber\\
{\rm Re}(\rho_{2l'-1} e^{-\sqrt{-1}\frac{2m}{n}\pi}) &=& -(a_{l'} -1)\cos(\frac{2(1 - l'+m)}{n}\pi) \nonumber\\
&\ &\quad 
+ a_{l'}\cos(\frac{2(l'-m)}{n}\pi)\nonumber
\end{eqnarray}
and $d_{2n}(2l,2m)>1$, $d_{2n}(2l'-1,2m)>1$ holds,
we have $\cos(2(l-m)\pi /n) \le \cos(2\pi / n)$ and 
$\cos(2(l'-m)\pi /n) \le \cos(2\pi / n)$.
By using the inequality $\cos(\frac{2(1 + l-m)}{n}\pi) \ge -1$ and 
$\cos(\frac{2(1 - l'+m)}{n}\pi) \ge -1$, we obtain
\begin{eqnarray}
{\rm Re}(\rho_{2l} e^{-\sqrt{-1}\frac{2m}{n}\pi}) &\le& (a_{l}-1) 
+ a_{l}\cos\frac{2\pi}{n} \nonumber\\
&=&(a_{l}-1)(1+ \cos\frac{2\pi}{n}) + \cos\frac{2\pi}{n},\nonumber\\
{\rm Re}(\rho_{2l'-1} e^{-\sqrt{-1}\frac{2m}{n}\pi}) &\le& (a_{l'} -1) 
+ a_{l'}\cos\frac{2\pi}{n}\nonumber\\
&=& (a_{l'}-1)(1+ \cos\frac{2\pi}{n}) + \cos\frac{2\pi}{n}\nonumber
\end{eqnarray}
Now, if we assume $a_l-1<(1-\cos\frac{2\pi}{n})/(1+ \cos\frac{2\pi}{n})$, 
then the left-hand-side of (\ref{ineq1}) is less than $1$.
Since
\begin{eqnarray}
-(a_m-1)\cos \frac{2\pi}{n} + a_m = (a_m-1)(1- \cos \frac{2\pi}{n}) + 1,\nonumber
\end{eqnarray}
the right-hand-side of (\ref{ineq1}) is always larger than $1$ and 
we obtain the inequality (\ref{ineq1}).
\end{proof}

Now, divide $\{ 1,\cdots, n\}$ into two nonempty sets 
\begin{eqnarray}
\{ 1,\cdots, n\} = A_+\sqcup A_-, \nonumber
\end{eqnarray}
and define $\rho_{k,\alpha}$ by $\rho_{k,\alpha} = \rho_k$ 
if $\alpha \in A_+$, and 
$\rho_{k,\alpha} = \rho_ke^{\sqrt{-1}\pi/n}$ if $\alpha \in A_-$.
Here, we suppose $a_k-1$ are sufficiently small so that 
Proposition \ref{disjoint} holds.
Denote by $\mathbf{l}_{k,\alpha}$ the segment in $\ImH$ connecting 
$(0,\rho_{k,\alpha})$ and $(0,\rho_{k+1,\alpha})$.
Then we can see that
\begin{eqnarray}
\square_k = q_k + \mathbf{l}_{k,1}\times \cdots \times \mathbf{l}_{k,n},\nonumber
\end{eqnarray}
and $\square_k$ satisfies (\ref{inclusion}).

\begin{prop}
Let $\square_1,\cdots,\square_{2n}$ be as above. 
Then $\square_k \cap \square_l$ is empty if $d_{2n}(k,l)>1$.
\end{prop}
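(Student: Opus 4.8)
The plan is to reduce the statement about the polytopes $\square_k \subset \ImH\otimes(\mathbf{t}^n)^*$ to a statement about the planar segments $\mathbf{l}_k\subset\C$, and then to handle separately the pairs $(k,l)$ with one index odd (already covered by Proposition \ref{disjoint}) and the pairs with both indices even (the genuinely new case).

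First I would use the product description $\square_k = q_k + \mathbf{l}_{k,1}\times\cdots\times\mathbf{l}_{k,n}$. Writing $q_{k,\alpha}$ for the $\alpha$-th component of $q_k$, which under $\ImH\cong\R\oplus\C$ is $(0,\rho_{k,\alpha})$, the $\alpha$-th component of $\square_k$ inside the $\alpha$-th copy of $\ImH$ is the segment from $(0,\rho_{k,\alpha})$ to $(0,\rho_{k+1,\alpha})$. Since an intersection of product sets is the product of the componentwise intersections, the $\alpha$-th component of $\square_k\cap\square_l$ is the intersection of the planar segments $[\rho_{k,\alpha},\rho_{k+1,\alpha}]$ and $[\rho_{l,\alpha},\rho_{l+1,\alpha}]$. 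By the definition of $\rho_{k,\alpha}$, for $\alpha\in A_+$ this equals $\mathbf{l}_k\cap\mathbf{l}_l$ and for $\alpha\in A_-$ it equals $e^{\sqrt{-1}\pi/n}(\mathbf{l}_k\cap\mathbf{l}_l)$, so in either case it is empty precisely when $\mathbf{l}_k\cap\mathbf{l}_l=\emptyset$. Hence $\square_k\cap\square_l=\emptyset$ if and only if $\mathbf{l}_k\cap\mathbf{l}_l=\emptyset$, and it suffices to prove that $\mathbf{l}_k\cap\mathbf{l}_l=\emptyset$ whenever $d_{2n}(k,l)>1$.

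If at least one of $k,l$ is odd, say $k=2m-1$, then $d_{2n}(l,2m-1)=d_{2n}(k,l)>1$, and Proposition \ref{disjoint} gives $\mathbf{l}_{2m-1}\cap\mathbf{l}_l=\emptyset$ immediately. The only remaining case — the one I expect to be the main obstacle — is $k=2a$, $l=2b$ with both indices even; since $d_{2n}(2a,2b)=2\,d_n(a,b)$, the hypothesis forces $a\not\equiv b\pmod n$. Here I would exploit that the even segments sit inside the regular $n$-gon: setting $\zeta_j:=e^{\sqrt{-1}\frac{2j}{n}\pi}$, the two endpoints $\rho_{2m}$ and $\rho_{2m+1}$ of $\mathbf{l}_{2m}$ are affine combinations of $\zeta_m$ and $\zeta_{m+1}$, so $\mathbf{l}_{2m}$ lies on the line through $\zeta_m$ and $\zeta_{m+1}$; moreover, for the chosen values of the $a_j$ (close to $1$), $\mathbf{l}_{2m}$ is a subsegment of the closed edge $[\zeta_m,\zeta_{m+1}]$ of the regular $n$-gon that contains neither endpoint $\zeta_m$ nor $\zeta_{m+1}$. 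Granting this, the even case finishes by convexity: for $a\not\equiv b$ (and $n\ge 3$) the sets $[\zeta_a,\zeta_{a+1}]$ and $[\zeta_b,\zeta_{b+1}]$ are distinct edges of a convex polygon, hence meet only in a common vertex (if any), while $\mathbf{l}_{2a}$ and $\mathbf{l}_{2b}$ avoid all of $\zeta_a,\zeta_{a+1},\zeta_b,\zeta_{b+1}$; therefore $\mathbf{l}_{2a}\cap\mathbf{l}_{2b}\subset[\zeta_a,\zeta_{a+1}]\cap[\zeta_b,\zeta_{b+1}]=\emptyset$.

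The hard part is thus the geometric claim just used — that for $a_j$ sufficiently close to $1$ each $\mathbf{l}_{2m}$ is a proper subsegment of the regular $n$-gon edge $[\zeta_m,\zeta_{m+1}]$ missing both vertices. I would establish this by computing, from the explicit formulas for $\rho_{2m}$ and $\rho_{2m+1}$, their barycentric coordinates along $[\zeta_m,\zeta_{m+1}]$ and checking that these coordinates lie strictly between $0$ and $1$; the size condition on the $a_j$ is precisely what makes this work. (For $n\ge 3$ the $n$-gon is non-degenerate, so distinct edges really do meet only at shared vertices; the borderline case $n=2$, where the polygon degenerates, has to be handled by a separate direct estimate.) Everything else is routine bookkeeping with the cyclic index in $\Z/2n\Z$.
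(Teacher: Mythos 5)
Your reduction to the planar segments and your handling of the pairs with an odd index via Proposition~\ref{disjoint} are fine, but the proof collapses at the step you yourself flag as the main one: the claim that each $\mathbf{l}_{2m}$ is a proper subsegment of the edge $[\zeta_m,\zeta_{m+1}]$ of the regular $n$-gon containing neither vertex. The construction takes every $a_m>1$, and
\begin{eqnarray}
\rho_{2m} \;=\; a_m\zeta_m+(1-a_m)\zeta_{m+1},\qquad
\rho_{2m+1}\;=\;(1-a_{m+1})\zeta_m+a_{m+1}\zeta_{m+1},\nonumber
\end{eqnarray}
so in the affine parametrization $t\mapsto(1-t)\zeta_m+t\zeta_{m+1}$ the endpoints of $\mathbf{l}_{2m}$ sit at $t=1-a_m<0$ and $t=a_{m+1}>1$. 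The barycentric coordinates are therefore \emph{not} strictly between $0$ and $1$: the segment strictly \emph{contains} the closed edge, both vertices included. (Your picture corresponds to $a_m<1$, which is excluded and which would also break the inequality~(\ref{ineq1}) in the proof of Proposition~\ref{disjoint}.) Consequently $\zeta_{m+1}\in\mathbf{l}_{2m}\cap\mathbf{l}_{2m+2}$ although $d_{2n}(2m,2m+2)=2>1$, so the planar statement you reduce to --- $\mathbf{l}_{2a}\cap\mathbf{l}_{2b}=\emptyset$ for all $a\not\equiv b$ --- is simply false, and no convexity argument can rescue the even--even case along these lines.

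This failure also points to a problem with your opening reduction. You conclude that $\square_k\cap\square_l=\emptyset$ \emph{if and only if} $\mathbf{l}_k\cap\mathbf{l}_l=\emptyset$, which makes the decomposition $\{1,\dots,n\}=A_+\sqcup A_-$ (required to be into two \emph{nonempty} sets) carry no information at all --- that should have been a warning sign. The point of the product structure is that it suffices for \emph{one} component to have empty intersection, and the paper's proof uses different components for different parities: the case ``$l$ odd'' is killed on the $A_+$ components by Proposition~\ref{disjoint}, while the case ``$l$ even'' is killed on the $A_-$ components, i.e.\ on the twisted family $\rho_{k,\alpha}=\rho_k e^{\sqrt{-1}\pi/n}$. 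The even--even pairs are exactly those for which the untwisted segments genuinely meet, so any correct argument must use the $A_-$ components in an essential way there; your proposal never engages with them.
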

\begin{proof}
Suppose there is an element $\hat{x}\in \square_k \cap \square_l$.
Then $\hat{x}$ can be written as $\hat{x}=(0,x)$ for some 
$x = (x_1,\cdots,x_n)\in \C^n$, and 
\begin{eqnarray}
x_\alpha = \rho_{k,\alpha} + t_{k,\alpha} (\rho_{k+1,\alpha} - \rho_{k,\alpha}) 
=  \rho_{l,\alpha} + t_{l,\alpha} (\rho_{l+1,\alpha} - \rho_{l,\alpha})
\label{lineareq}
\end{eqnarray}
holds for some $0\le t_{k,\alpha},t_{l,\alpha}\le 1$.
Now, assume that $l$ is odd.
Then (\ref{lineareq}) has no solution for $\alpha \in A_+$ by Proposition \ref{disjoint}.
Similarly, if  $l$ is supposed to be even, then (\ref{lineareq}) has no solution for $\alpha \in A_-$.
Hence $\square_k \cap \square_l$ should be empty.
\end{proof}
Since $L_{\square_k} = (\mathbb{P}^1)^n$, 
and there is an orientation preserving diffeomorphism 
between $(\mathbb{P}^1)^n$ and $(\overline{\mathbb{P}^1})^n$, 
we obtain the following example.
\begin{thm}
Let $X(u,\lambda)$ be as above.
Then there exists a compact smooth \sL 
submanifold diffeomorphic to 
\begin{eqnarray}
2n(\mathbb{P}^1)^n \# (S^1\times S^{2n-1})\nonumber
\end{eqnarray}
embedded in $X(u,\lambda)$.
\end{thm}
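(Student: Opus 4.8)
The plan is to obtain this statement as the concrete output of Theorem \ref{main} applied to the family $\{\square_k\}_{k=1}^{2n}$ built above, so that the argument is little more than the assembly of facts already in hand. First I would record that every hypothesis of Theorem \ref{main} has now been verified: each $\square_k$ is the product $q_k+\mathbf{l}_{k,1}\times\cdots\times\mathbf{l}_{k,n}$ of $n$ segments and satisfies the inclusion (\ref{inclusion}), so it is a $\sigma$-Delzant polytope for the relevant $\sigma\in S^2$; the adjacent polytopes $\square_{k-1}$ and $\square_k$ (with $k$ taken in $\Z/2n\Z$) intersect standardly with angle $\pi/n$, by the explicit affine change of coordinates based at $q_k$; and $\square_k\cap\square_l=\emptyset$ whenever $d_{2n}(k,l)>1$, by the last Proposition, which itself rests on Proposition \ref{disjoint} and the standing assumption that every $a_k-1$ be sufficiently small. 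I would point out that the splitting $\{1,\dots,n\}=A_+\sqcup A_-$ together with the rotation of the $A_-$-coordinates by $e^{\sqrt{-1}\pi/n}$ is precisely what promotes Proposition \ref{disjoint}, which only governs the odd-indexed segments, to disjointness for both parities of the index. I would also reindex the $\square_k$ cyclically so that they play the role of the polytopes $\triangle_k$ in Theorem \ref{main}, using $\gcd(n+1,n)=1$ to match the angle labels; the $2n$ adjacent type-$1$ intersections then form a single cycle quiver, which is trivially covered by cycles.

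Granting this, Theorem \ref{main} yields a smooth family $\{\tilde{L}_t\}_{0<t<\delta}$ of compact \sL submanifolds embedded in $X(u,\lambda)$, converging to $\bigcup_{k=1}^{2n}L_{\square_k}$ in the sense of current, with each $\tilde{L}_t$ diffeomorphic to
\[
L_{\square_1}\#\overline{L}_{\square_2}\#\cdots\#L_{\square_{2n-1}}\#\overline{L}_{\square_{2n}}\#(S^1\times S^{2n-1}).
\]
To conclude I would identify the summands topologically. Since $X(u,\lambda)$ is the product of the $n$ multi Eguchi-Hanson spaces indexed by $\alpha=1,\dots,n$, and $\square_k$ is the product of the $n$ segments $\mathbf{l}_{k,\alpha}$, each joining two consecutive marked points in the $\alpha$-th factor, the toric variety $L_{\square_k}=\mu_\lambda^{-1}(\square_k)$ is a product of $n$ holomorphically embedded projective lines; hence $L_{\square_k}\cong(\mathbb{P}^1)^n$. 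Because $(\mathbb{P}^1)^n$ admits an orientation-reversing self-diffeomorphism (for instance complex conjugation on a single factor), $\overline{L}_{\square_k}$ is again diffeomorphic to $(\mathbb{P}^1)^n$, and the connected sum does not see the orientations of the pieces; therefore $\tilde{L}_t$ is diffeomorphic to $2n(\mathbb{P}^1)^n\#(S^1\times S^{2n-1})$, which furnishes the asserted embedded compact smooth \sL submanifold.

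I do not expect a serious obstacle here: the substantive geometry—disjointness of the non-adjacent $\square_k$, the standard form of the adjacent intersections, the Delzant property, and the desingularization itself—has already been carried out in the preceding propositions and in Theorem \ref{main}. The two points I would write out with care are: (i) the bookkeeping of the $\sigma$-labels, namely that after the cyclic reindexing each $L_{\square_k}$ is genuinely \sL and the union $\bigcup_k L_{\square_k}$ is admissible for Theorem \ref{main} (equivalently for Theorem \ref{gluing}) despite the $e^{\sqrt{-1}\pi/n}$ rotation of the $A_-$-coordinates; and (ii) the orientation bookkeeping that collapses the alternating-orientation connected sum produced by Theorem \ref{main} into $2n$ plain copies of $(\mathbb{P}^1)^n$.
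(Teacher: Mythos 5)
Your proposal is correct and follows essentially the same route as the paper, which simply invokes Theorem \ref{main} for the family $\{\square_k\}$ (whose hypotheses were verified in the preceding propositions of Example (1)) and then notes that $L_{\square_k}=(\mathbb{P}^1)^n$ admits an orientation-reversing self-diffeomorphism, so the alternating bars in $L_{\square_1}\#\overline{L}_{\square_2}\#\cdots\#(S^1\times S^{2n-1})$ can be dropped. Your two flagged bookkeeping points (the $\sigma$-labels after reindexing and the orientation collapse) are exactly the details the paper treats implicitly, and your handling of them is sound.
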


\subsection{Example $(2)$}\label{5.2}
Here we construct one more example in 
an $8$ dimensional toric \hK manifolds.

Let
\[ u := 
\left (
\begin{array}{ccccc}
1 & 1 & 0 & 1 & 0 \\
1 & 0 & 1 & 0 & 1
\end{array}
\right ) \in {\rm Hom}(\Z^5, \Z^2),
\]
and $\lambda=(\lambda_0,\cdots,\lambda_4)\in \ImH\otimes (\mathbf{t}^5)^*$.
Put 
\begin{eqnarray}
q_1:=-(\lambda_1,\lambda_2),\ q_2:=-(\lambda_3,\lambda_2),\ q_3:=-(\lambda_3,\lambda_4),\ q_4:=-(\lambda_1,\lambda_4)\nonumber
\end{eqnarray}
and $\triangle_k := q_k + \tau_k\otimes \triangle$ 
for $k=1,\cdots,4$, where 
\begin{eqnarray}
\tau_1 &:=& \lambda_1 + \lambda_2 -\lambda_0, \nonumber\\
\tau_2 &:=& \lambda_3 + \lambda_2 -\lambda_0, \nonumber\\
\tau_3 &:=& \lambda_3 + \lambda_4 -\lambda_0, \nonumber\\
\tau_4 &:=& \lambda_1 + \lambda_4 -\lambda_0, \nonumber
\end{eqnarray}
and 
\begin{eqnarray}
\triangle := \{ (t_1,t_2) \in (\mathbf{t}^2)^*\cong \R^2;
\ t_1\ge 0,\ t_2\ge 0,\ t_1+t_2 \le 1\}.\nonumber
\end{eqnarray}
If we assume that $\tau_1 = (0,\sqrt{-1}r_1)$, 
$\tau_2 = (0,-r_2)$, $\tau_3 = (0,-\sqrt{-1}r_1)$, 
$\tau_4 = (0,r_2)\in \R \oplus \C$, where $r_1,r_2>0$, then $\triangle_k$ is a 
$\sigma(k\pi/2)$-Delzant polytope. 
For example, put $\lambda_0 = \lambda_1 = 0$, 
$\lambda_2 = (0,\sqrt{-1}r_1)$, $\lambda_3 = (0,-r_2 - \sqrt{-1}r_1)$ 
and $\lambda_4 = (0,r_2)$.
\begin{prop}
Under the above setting, $\triangle_k$ and $\triangle_{k+1}$ are 
intersecting standardly with angle $\pi/2$ for every 
$k=1,\cdots,4$. 
Here we suppose $\triangle_5 = \triangle_1$.
\end{prop}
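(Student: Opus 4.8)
The plan is to verify Definition \ref{def.intersection} directly for the consecutive pairs $(\triangle_k,\triangle_{k+1})$, using the explicit description $\triangle_k = q_k + \tau_k\otimes\triangle$ together with the concrete values of $q_k$ and $\tau_k$ in terms of $\lambda_0,\dots,\lambda_4$. First I would compute the intersection $\triangle_k\cap\triangle_{k+1}$: since $\triangle$ is the standard simplex, the only candidate for a point of $\triangle_k$ lying in $V(q_{k+1},\sigma((k+1)\pi/2))$ is the image under $q_k + \tau_k\otimes(\cdot)$ of the vertex $(1,0)$ or $(0,1)$ of $\triangle$, and one checks (using $q_2-q_1 = -(\lambda_3-\lambda_1,0)$, etc.) that $\triangle_k$ and $\triangle_{k+1}$ meet precisely at the common vertex $q_{k+1}$. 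This mirrors exactly the computation already carried out in Subsection \ref{5.1} for the $\square_k$'s, where the telescoping identity $q_{k-1} + \tau_{k-1}\otimes(\text{edge vector}) = q_k + (\text{rotated edge vector})$ was verified line by line.

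Next I would exhibit the required $\psi\in GL_n\Z$ (here $n=2$), $\theta_0\in\R$ and small $r>0$. Near the vertex $q_{k+1}$, the simplex $\triangle$ looks like the positive quadrant $Q(r)$ spanned by two of its edge directions, so $\triangle_k - q_{k+1}$ near $q_{k+1}$ is $\tau_k\otimes$(a $GL_2\Z$-image of $Q(r)$) and $\triangle_{k+1}-q_{k+1}$ near $q_{k+1}$ is $\tau_{k+1}\otimes Q(r)$. Because $\tau_k = (0,\sqrt{-1}^{\,k}r_j)$ alternates through the fourth roots of a positive real times $\{r_1,r_2\}$, the arguments of $\tau_k$ and $\tau_{k+1}$ differ by exactly $\pi/2$; after absorbing the moduli $r_1,r_2$ and the standard-simplex-to-quadrant change of basis into a single $\psi\in GL_2\Z$ and choosing $\theta_0 = \arg\tau_k$, one gets $\psi(\triangle_k - q_{k+1})\cap B(r) = \sigma(\theta_0)\otimes Q(r)$ and $\psi(\triangle_{k+1}-q_{k+1})\cap B(r) = \sigma(\theta_0+\pi/2)\otimes Q(r)$, which is Definition \ref{def.intersection} with $\theta=\pi/2$. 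The case $(\triangle_4,\triangle_5)=(\triangle_4,\triangle_1)$ is handled identically since the labels are cyclic of period $4=2n$.

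The main obstacle I anticipate is bookkeeping rather than anything conceptual: one must be careful that the \emph{same} unimodular $\psi$ simultaneously straightens both edges emanating from $q_{k+1}$ into the standard quadrant $Q(r)$ (the two edges of $\triangle$ at a vertex are $e_1,e_2$ at the right-angle vertex $(0,0)$ but $e_1-e_2$ and $-e_2$, resp.\ $e_2-e_1$ and $-e_1$, at the vertices $(1,0),(0,1)$, so a nontrivial $\psi$ really is needed at two of the four vertices), and that the rescalings by $r_1$ versus $r_2$ on the two $\mathbf{t}^2$-factors are compatible with $\psi$ being in $GL_2\Z$ rather than merely $GL_2\R$ — this forces one to rescale the lattice coordinates first, exactly as the hyperrectangle $\square(r_{k,1},\dots,r_{k,n})$ was used in Example $(1)$. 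Once the correct $\psi$ and $\theta_0$ are written down for each $k$, the verification is a short explicit check, and the proposition follows.
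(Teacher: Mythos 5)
Your overall strategy --- locate the single common point of $\triangle_k$ and $\triangle_{k+1}$, re-express both polytopes with that point as base point, and read off the angle $\pi/2$ from the arguments of $\tau_k$ and $\tau_{k+1}$ after straightening the local cone by some $\psi\in GL_2\Z$ --- is exactly what the paper does. But your central identification of the intersection point is wrong: $\triangle_k\cap\triangle_{k+1}$ is \emph{not} $\{q_{k+1}\}$. Indeed $q_2-q_1=(\lambda_1-\lambda_3,0)$, and for $q_2$ to lie in $\triangle_1=q_1+\tau_1\otimes\triangle$ one would need $\lambda_1-\lambda_3=t_1\tau_1$ for some $t_1\in[0,1]$; with the stated values this reads $(0,r_2+\sqrt{-1}r_1)=(0,\sqrt{-1}r_1t_1)$, which forces $r_2=0$, excluded. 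The actual intersection point is the image of the simplex vertex $(1,0)$ under \emph{both} parametrizations, $q_1+\tau_1\otimes(1,0)=q_2+\tau_2\otimes(1,0)=(\lambda_2-\lambda_0,-\lambda_2)$; similarly $(\triangle_2,\triangle_3)$ and $(\triangle_4,\triangle_1)$ meet at the image of $(0,1)$, and $(\triangle_3,\triangle_4)$ at the image of $(1,0)$. Since the whole verification of Definition \ref{def.intersection} takes place at this point, misplacing it is a genuine gap: your sentence ``$\triangle_{k+1}-q_{k+1}$ near $q_{k+1}$ is $\tau_{k+1}\otimes Q(r)$'' describes the cone of $\triangle_{k+1}$ at its right-angle vertex, whereas the relevant cone for both polytopes is the one at $(1,0)$ (or $(0,1)$), spanned by $\{-e_1,\,e_2-e_1\}$ (or $\{-e_2,\,e_1-e_2\}$), and your later remark that a nontrivial $\psi$ is needed at the non-right-angle vertices quietly contradicts the claim that the meeting point is $q_{k+1}$.

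Once the point is corrected, the rest of your plan goes through and coincides with the paper's proof: writing $(t_1,t_2)=(1,0)+(t_1-1,t_2)$ gives $\triangle_1=(\lambda_2-\lambda_0,-\lambda_2)+\sigma(\pi/2)\otimes r_1(t_1-1,t_2)$ and $\triangle_2=(\lambda_2-\lambda_0,-\lambda_2)+\sigma(\pi)\otimes r_2(t_1-1,t_2)$, so the angle is $\pi/2$; the scalars $r_1,r_2$ are harmless because $Q(r)$ is a cone (no lattice rescaling issue arises, contrary to your worry), and a single unimodular $\psi$ straightens the common cone $\{(t_1-1,t_2)\}$ for both polytopes simultaneously since they are parametrized by the same simplex coordinates. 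Your caveat about needing a nontrivial $\psi$ is correct and in fact applies at all four intersection points, none of which sits over the vertex $(0,0)$; for the record, the paper's choice $\psi=\mathrm{diag}(-1,1)$ straightens only one of the two edge directions, and the cleaner choice is the element of $GL_2\Z$ sending $-e_1\mapsto e_1$ and $e_2-e_1\mapsto e_2$.
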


\begin{proof}
We check the case of $k=1$, because other cases can be shown 
similarly. 
Let $q_k + \tau_k\otimes (t_1,t_2) \in \triangle_k$. 
Then we have 
\begin{eqnarray}
q_1 + \tau_1\otimes (t_1,t_2) 
&=& q_1 + \tau_1\otimes (1,0) + \tau_1\otimes (t_1-1,t_2)\nonumber\\
&=& (\lambda_2-\lambda_0,-\lambda_2) + \sigma(\frac{\pi}{2})
\otimes r_1(t_1-1,t_2),\nonumber\\
q_2 + \tau_2\otimes (t_1,t_2) 
&=& q_2 + \tau_2\otimes (1,0) + \tau_2\otimes (t_1-1,t_2)\nonumber\\
&=& (\lambda_2-\lambda_0,-\lambda_2) + \sigma(\pi)\otimes r_2(t_1-1,t_2),\nonumber
\end{eqnarray}
therefore $\triangle_1$ and $\triangle_2$ are 
intersecting standardly with angle $\pi/2$.
Note that we have to take 
\[ \psi = 
\left (
\begin{array}{cc}
-1 & 0\\
0 & 1
\end{array}
\right )
\]
in Definition \ref{def.intersection}, since 
$t_1 - 1$ is nonpositive in this case.
\end{proof}
\begin{prop}
Under the above setting, $\triangle_1 \cap \triangle_3$ 
and $\triangle_2 \cap \triangle_4$ are empty sets.
\end{prop}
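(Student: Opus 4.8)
The plan is to exploit the fact that each $\triangle_k$ is contained in the affine subspace $V(q_k,\sigma(k\pi/2))$, which is a translate of the linear subspace $\sigma(k\pi/2)\otimes(\mathbf{t}^2)^*$ of $\ImH\otimes(\mathbf{t}^2)^*$. For the two pairs in question the directions are antipodal, so these linear subspaces coincide: $\sigma(3\pi/2)=-\sigma(\pi/2)$ and $\sigma(2\pi)=-\sigma(\pi)$, hence $\sigma(3\pi/2)\otimes(\mathbf{t}^2)^*=\sigma(\pi/2)\otimes(\mathbf{t}^2)^*$ and $\sigma(0)\otimes(\mathbf{t}^2)^*=\sigma(\pi)\otimes(\mathbf{t}^2)^*$. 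Thus $\triangle_1$ and $\triangle_3$ lie in two \emph{parallel} $2$-planes, and likewise $\triangle_2$ and $\triangle_4$; to prove emptiness of the intersections it therefore suffices to check that these parallel planes are actually distinct, i.e. that the offsets $q_3-q_1$ and $q_4-q_2$ do not lie in the respective linear subspaces.

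For the first pair I would compute directly from the definitions that $q_3-q_1=(\lambda_1-\lambda_3,\ \lambda_2-\lambda_4)$, and then rewrite the first slot using $\tau_1-\tau_2=\lambda_1-\lambda_3$, which is immediate from $\tau_1=\lambda_1+\lambda_2-\lambda_0$ and $\tau_2=\lambda_3+\lambda_2-\lambda_0$. Under the identification $\ImH\cong\R\oplus\C$ with $\sigma(\theta)=(0,e^{\sqrt{-1}\theta})$, the standing hypotheses give $\tau_1-\tau_2=(0,\sqrt{-1}r_1)-(0,-r_2)=(0,\ r_2+\sqrt{-1}r_1)$, whose $\C$-component has nonzero real part $r_2>0$ and so is not a real multiple of $\sigma(\pi/2)=(0,\sqrt{-1})$. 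Hence $q_3-q_1\notin\sigma(\pi/2)\otimes(\mathbf{t}^2)^*$, so the two planes are disjoint and $\triangle_1\cap\triangle_3=\emptyset$.

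The second pair is handled the same way: $q_4-q_2=(\lambda_3-\lambda_1,\ \lambda_2-\lambda_4)$, and $\lambda_3-\lambda_1=\tau_2-\tau_1=(0,\ -r_2-\sqrt{-1}r_1)$, whose $\C$-component has nonzero imaginary part $-r_1\ne 0$ and so is not a real multiple of $\sigma(0)=(0,1)$; therefore $q_4-q_2\notin\sigma(0)\otimes(\mathbf{t}^2)^*$ and $\triangle_2\cap\triangle_4=\emptyset$. Alternatively, mimicking the proof that $\triangle_k$ and $\triangle_{k+1}$ intersect standardly, one may assume a common point $q_1+\tau_1\otimes(t_1,t_2)=q_3+\tau_3\otimes(s_1,s_2)$, equate first $(\mathbf{t}^2)^*$-components to get $t_1\tau_1-s_1\tau_3=\tau_1-\tau_2$, and observe that the left side lies in $\{0\}\oplus\sqrt{-1}\R$ while the right side does not.

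I do not expect any serious obstacle here: the argument is a short linear computation. The only points requiring care are the bookkeeping of the linear relations among the $\tau_k$ and the correct identification of $\sigma(\theta)$ inside $\ImH\cong\R\oplus\C$, so that one sees the relevant real or imaginary part is nonzero. The structural content is simply that $r_1,r_2>0$ forces the two relevant polytopes to sit in parallel but distinct affine $2$-planes.
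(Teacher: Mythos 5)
Your proposal is correct and rests on exactly the same computation as the paper's proof: the identity $\lambda_1-\lambda_3=\tau_1-\tau_2=(0,\,r_2+\sqrt{-1}r_1)$, whose $\C$-slot has nonzero real part while $\tau_1,\tau_3$ are purely imaginary there (and symmetrically for the pair $\triangle_2,\triangle_4$). Your primary packaging of this as ``the two polytopes lie in distinct parallel affine $2$-planes'' is a mild geometric rephrasing, and your ``alternative'' argument is verbatim the paper's: assume a common point, read off the first $(\mathbf{t}^2)^*$-component to get $t_1\tau_1-s_1\tau_3=\tau_1-\tau_2$, and note the two sides cannot agree.
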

\begin{proof}
Every $x\in \triangle_1 \cap \triangle_3$ can be written as 
\begin{eqnarray}
x = q_1 + \tau_1\otimes (t_1,t_2) 
= q_3 + \tau_3\otimes (s_1,s_2)\nonumber
\end{eqnarray}
for some $0\le t_1,t_2,s_1,s_2\le 1$.
The first component of the above equation gives 
\begin{eqnarray}
t_1\tau_1 - s_1\tau_3 = \lambda_1 - \lambda_3 = \tau_1 - \tau_2.\nonumber
\end{eqnarray}
By substituting $\tau_1 = (0,\sqrt{-1}r_1)$, 
$\tau_2 = (0,-r_2)$ and 
$\tau_3 = (0,-\sqrt{-1}r_1)$, 
we obtain 
\begin{eqnarray}
(t_1 + s_1)\sqrt{-1}r_1 = r_2 + \sqrt{-1}r_1,\nonumber
\end{eqnarray}
which has no solution $(t_1,s_1) \in \R^2$.
$\triangle_2 \cap \triangle_4 = \emptyset$ also follows by 
the same argument.
\end{proof}
Thus we obtain the following example.
\begin{thm}
Let $X(u,\lambda)$ be as above.
Then there exists a compact smooth \sL 
submanifold diffeomorphic to 
$2\mathbb{P}^2 \# 2\overline{\mathbb{P}^2} \# (S^1\times S^3)$
embedded in $X(u,\lambda)$.
\end{thm}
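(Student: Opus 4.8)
The plan is to obtain $\tilde{L}_t$ by a direct application of Theorem \ref{main} to the four polytopes $\triangle_1,\triangle_2,\triangle_3,\triangle_4$ constructed above, with $n=2$ (so $2n=4$ and $S^{2n-1}=S^3$), the three hypotheses of that theorem having essentially been arranged already. Concretely: each $\triangle_k$ is a $\sigma(k\pi/2)$-Delzant polytope once $\tau_1,\dots,\tau_4$ are taken to be the stated purely imaginary quaternions; the condition that $\triangle_k$ and $\triangle_{k+1}$ intersect standardly with angle $\pi/2=\pi/n$ (cyclically, with $\triangle_5=\triangle_1$) is the first of the two propositions above; and, since $d_4(k,l)>1$ holds only for $\{k,l\}=\{1,3\}$ and $\{k,l\}=\{2,4\}$, the disjointness hypothesis that $\triangle_k\cap\triangle_l=\emptyset$ whenever $d_4(k,l)>1$ is the second. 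The one extra point to nail down before invoking Theorem \ref{main} is that $X(u,\lambda)$ is a smooth toric \hK $8$-manifold for the explicit $\lambda$ with $\lambda_0=\lambda_1=0$, $\lambda_2=(0,\sqrt{-1}r_1)$, $\lambda_3=(0,-r_2-\sqrt{-1}r_1)$, $\lambda_4=(0,r_2)$ and $r_1,r_2>0$; this is the finite check of conditions $(*1)$ and $(*2)$ of Theorem \ref{smooth} for the five hyperplanes $H_1,\dots,H_5$, using that the only pairs of columns of $u$ failing to be a $\Z$-basis of $\Z^2$ are the two pairs of repeated columns, for which the corresponding entries of $\lambda$ are distinct by the explicit choice (so the associated hyperplanes are disjoint), whereas every other pair of columns is a basis whose two hyperplanes meet in a single point, and no three of the $H_k$ have a common point.

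Granting these, Theorem \ref{main} produces a family $\{\tilde{L}_t\}_{0<t<\delta}$ of compact smooth \sL submanifolds of $X(u,\lambda)$ with $\lim_{t\to0}\tilde{L}_t=\bigcup_{k=1}^4 L_{\triangle_k}$ in the sense of current and with $\tilde{L}_t$ diffeomorphic to $L_{\triangle_1}\#\overline{L}_{\triangle_2}\#L_{\triangle_3}\#\overline{L}_{\triangle_4}\#(S^1\times S^3)$. To conclude I would identify the summands: since $X(u,\lambda)$ is smooth, each $\sigma(k\pi/2)$-Delzant polytope $\triangle_k$ is an ordinary Delzant polytope, and the affine identification of $V(q_k,\sigma(k\pi/2))$ with $(\mathbf{t}^2)^*$ carries it to the standard $2$-simplex $\triangle$; hence $(L_{\triangle_k},I_{\lambda,1}^{\sigma(k\pi/2)}|_{L_{\triangle_k}})$ is the smooth toric variety associated to the $2$-simplex, namely $\mathbb{P}^2$, so $L_{\triangle_k}$ is diffeomorphic to $\mathbb{P}^2$ as an oriented manifold (with the orientation induced by the complex structure) and $\overline{L}_{\triangle_k}$ to $\overline{\mathbb{P}^2}$. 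Substituting gives $\tilde{L}_t$ diffeomorphic to $\mathbb{P}^2\#\overline{\mathbb{P}^2}\#\mathbb{P}^2\#\overline{\mathbb{P}^2}\#(S^1\times S^3)=2\mathbb{P}^2\#2\overline{\mathbb{P}^2}\#(S^1\times S^3)$, as claimed.

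I do not expect a genuine obstacle of its own here: the real work is upstream, in Theorem \ref{main} (hence in Theorem \ref{gluing} and Theorem 9.7 of \cite{joyce2003special}) and in the two preceding propositions. Within the present argument the step needing the most attention is the smoothness verification of the first paragraph, since $u$ has repeated columns and one must confirm the hyperplane-arrangement conditions of Theorem \ref{smooth} by hand for the chosen $\lambda$; everything else is bookkeeping, together with the standard toric fact that the $2$-simplex corresponds to $\mathbb{P}^2$.
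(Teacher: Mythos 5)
Your proposal is correct and follows essentially the same route as the paper, which obtains this theorem by applying Theorem \ref{main} with $n=2$ to $\triangle_1,\dots,\triangle_4$, using the two preceding propositions for the intersection hypotheses and the identification $L_{\triangle_k}\cong\mathbb{P}^2$. The only difference is that you make explicit the verification of smoothness of $X(u,\lambda)$ via Theorem \ref{smooth} (the repeated columns $(1,0)$ and $(0,1)$ of $u$ forcing the corresponding $\lambda$-entries to differ), a point the paper leaves implicit.
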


\subsection{Example $(3)$}\label{5.3}
We can describe a generalization of Theorem \ref{main} 
in the more complicated situation.

\begin{thm}
Let $(\mathcal{V},\mathcal{E},s,t)$ be a quiver, 
$X(u,\lambda)$ be a smooth toric \hK manifold, and 
$\{\triangle_k \}_{k\in \mathcal{V}}$ be a family of 
subsets of $\ImH \otimes (\mathbf{t}^n)^*$.
Assume that every $\triangle_k$ is a 
$\sigma(\theta_k)$-Delzant polytope for some $\theta_k\in\R$, 
$\triangle_{s(h)}$ and $\triangle_{t(h)}$ intersecting standardly with 
angle $\pi/n$ if $h \in\mathcal{E}$, 
otherwise $\triangle_{k_1} \cap \triangle_{k_2} = \emptyset$ or $k_1=k_2$.
Moreover, suppose that $\mathcal{E}$ is covered by cycles.
Then there exists a family of compact \sL submanifolds $\{ \tilde{L}_t\}_{0<t<\delta}$ 
which converges to $\bigcup_{k\in \mathcal{V}} L_{\triangle_k}$ in the sense of current.
\label{graph}
\end{thm}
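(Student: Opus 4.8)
The plan is to verify that the hypotheses of Theorem \ref{gluing} are satisfied with $M = X(u,\lambda)$, the complex structure $I_{\lambda,1}$, and holomorphic volume form $\Omega = (\omega_{\lambda,2}+\sqrt{-1}\omega_{\lambda,3})^n$, taking the family of special Lagrangian submanifolds to be $\{L_{\triangle_k}\}_{k\in\mathcal{V}}$. Since $n\cdot(k\pi/n) = k\pi \in \pi\Z$, each $\sigma(\theta_k)$-\hL submanifold is indeed special Lagrangian for the above Calabi-Yau structure, so $L_{\triangle_k}$ qualifies as a connected compact \sL submanifold embedded in $X(u,\lambda)$ (it is smooth and compact because $\triangle_k$ is a $\sigma(\theta_k)$-Delzant polytope and $X(u,\lambda)$ is smooth, so $L_{\triangle_k}$ is a smooth compact toric variety). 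Connectedness follows from convexity of $\triangle_k$.

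Next I would check the intersection hypotheses. For each edge $h\in\mathcal{E}$ the polytopes $\triangle_{s(h)}$ and $\triangle_{t(h)}$ intersect standardly with angle $\pi/n$; by Definition \ref{def.intersection} this means, after applying $\psi\in GL_n\Z$, that near the common point $q$ the polytopes look locally like $\sigma(\theta_0)\otimes Q(r)$ and $\sigma(\theta_0+\pi/n)\otimes Q(r)$. The fixed point $p_h = \mu_\lambda^{-1}(q)$ is then a $T^n$-fixed point of $X(u,\lambda)$ lying in $L_{\triangle_{s(h)}}\cap L_{\triangle_{t(h)}}$, and because $\triangle_{s(h)}\cap\triangle_{t(h)} = \{q\}$ this is the only intersection point of the two submanifolds. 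I would then invoke Proposition \ref{apex} together with Proposition \ref{model} to identify the tangent spaces $V_\pm = dF_{p_h}(T_{p_h}L_{\triangle_{s(h)},t(h)})$ as $\bigoplus_i V_i(\sigma(\theta_{s(h)}))$ and $\bigoplus_i V_i(\sigma(\theta_{t(h)}))$ respectively — here the sign conditions $\varepsilon_i = \varepsilon'_i$ from Proposition \ref{apex} are exactly what lets us read off the signs from the polytope data, and the change of basis $\psi$ does not affect the conclusion since it merely relabels the $\R$-representations $Z_i$. Then Proposition \ref{angle} gives characterizing angles equal to $(\theta_{t(h)}-\theta_{s(h)})/2 = \pm\pi/(2n)$ with multiplicity $2n$, so the sum of the characterizing angles is $\pm\pi$; after possibly reversing the roles of $s(h)$ and $t(h)$ (which only reverses the direction of the edge, and $\mathcal{E}$ being covered by cycles is an orientation-independent condition) we may take the type to be $1$. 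Transversality follows because the two $2n$-dimensional subspaces $V_+, V_-$ of $\quater^n$ have all characterizing angles strictly in $(0,\pi)$, hence $V_+\cap V_- = 0$. The disjointness hypothesis $L_\alpha\cap L_\beta = \emptyset$ for non-adjacent $\alpha,\beta$ is immediate from $\triangle_{k_1}\cap\triangle_{k_2} = \emptyset$ when there is no edge, since $L_{\triangle_k} = \mu_\lambda^{-1}(\triangle_k)$.

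Having checked all hypotheses, Theorem \ref{gluing} applies verbatim and produces the desired family $\{\tilde L_t\}_{0<t<\delta}$ of compact \sL submanifolds embedded in $X(u,\lambda)$ with $\lim_{t\to 0}\tilde L_t = \bigcup_{k\in\mathcal{V}}L_{\triangle_k}$ in the sense of current. The main subtlety — and the step I expect to require the most care — is the bookkeeping around edge orientations: one must check that reversing an edge in order to arrange type $1$ at each intersection point is harmless, i.e. that "covered by cycles" is preserved, and that the characterizing-angle computation via Propositions \ref{model}, \ref{apex}, \ref{angle} is robust under the $GL_n\Z$ transformation $\psi$ appearing in the definition of standard intersection and under the fact that the polytopes $\triangle_k$ need not be positioned exactly as in the normal form of Proposition \ref{model} but only up to such a $\psi$ and a translation. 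Everything else is a direct citation of the preceding results. One could optionally also record the diffeomorphism type of $\tilde L_t$ by appealing to Proposition \ref{topology}, obtaining $\#_{k}(\pm L_{\triangle_k})\,\#\,N(S^1\times S^{2n-1})$ with $N = \mathbf{h}_1$ of the quiver and orientations fixed by $\mathrm{Re}\,\Omega|_{L_{\triangle_k}} = (-1)^k(\omega^{\sigma(\theta_k)}_{\lambda,1})^n|_{L_{\triangle_k}}$, exactly as in the proof of Theorem \ref{main}.
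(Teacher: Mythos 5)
Your proposal is correct and follows essentially the same route as the paper: the paper's proof of Theorem \ref{graph} is literally a reference to the proof of Theorem \ref{main}, which likewise verifies the hypotheses of Theorem \ref{gluing} by computing the characterizing angles via Propositions \ref{model} and \ref{angle} to get type~$1$ intersections and then invokes the gluing theorem (with the topology read off from Proposition \ref{topology} and the orientations ${\rm Re}\,\Omega|_{L_{\triangle_k}}$). The extra care you flag about edge orientations and the $GL_n\Z$ change of basis is sensible but is not treated explicitly in the paper either.
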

\begin{proof}
The proof is same as that of Theorem \ref{main}. 
\end{proof}
Fix positive real numbers $a,b,c,a_m$ for $m=1,\cdots,N$ so that 
$0<a_1<a_2<\cdots <a_N$.
Let 
\[ u = 
\left (
\begin{array}{cccccccc}
1 & 1 & 1 & 1 & 0 & 0 & \cdots & 0 \\
0 & 0 & 0 & 0 & 1 & 1 & \cdots & 1 
\end{array}
\right ) \in {\rm Hom}(\Z^{2N+6}, \Z^2)
\]
and 
\begin{eqnarray}
\lambda = (\lambda_{-3}, \lambda_{-2}, \lambda_{-1}, \lambda_0, \lambda_1,\cdots,\lambda_{2N+2}) 
\in \ImH\otimes (\mathbf{t}^{2N+6})^* ,\nonumber
\end{eqnarray}
where $-\lambda_0=0$, $-\lambda_{-1}=(0,\sqrt{-1}b)$,  $-\lambda_{-2}=(0,a+\sqrt{-1}b)$, 
$-\lambda_{-3}=(0,a)$, $-\lambda_{2m+1}=(0,a_m + \sqrt{-1}c)$ and $-\lambda_{2m+2}=(0,a_m)$
for $m=0,1,\cdots,N$.
Here, we put $a_0=0$.
Then $X(u,\lambda)$ is smooth and become the direct product $X(u',\lambda')\times X(u'',\lambda'')$ 
where $u'=(1,1,1,1) \in {\rm Hom}(\Z^4,\Z)$, $u''=(1,\cdots,1) \in {\rm Hom}(\Z^{2N+2},\Z)$, 
$\lambda' =(\lambda_{-3}, \lambda_{-2}, \lambda_{-1}, \lambda_0)$ and 
$\lambda''=(\lambda_1,\cdots,\lambda_{2N+2})$.
Denote by $[p,q]\subset \ImH$ the segment connecting $p,q\in \ImH$, 
and put 
$\mathbf{A}_{-}:=[-\lambda_0,-\lambda_{-1}]$, $\mathbf{A}_{+}:=[-\lambda_{-2},-\lambda_{-3}]$, 
$\mathbf{S}_{+}:=[-\lambda_{-1},-\lambda_{-2}]$, $\mathbf{S}_{-}:=[-\lambda_{-3},-\lambda_0]$, 
$\mathbf{A}_{m}:=[-\lambda_{2m+1},-\lambda_{2m+2}]$ for $m=0,1,\cdots,N$, 
$\mathbf{S}_{+,m}:=[-\lambda_{2m-1},-\lambda_{2m+1}]$ and 
$\mathbf{S}_{-,m}:=[-\lambda_{2m},-\lambda_{2m+2}]$ for $m=1,\cdots, N$.

Let 
\begin{eqnarray}
\square_{2l,1} &:=& \mathbf{A}_{-}\times \mathbf{A}_{2l},\nonumber\\
\square_{2l,2} &:=& \mathbf{S}_{+}\times \mathbf{S}_{+,2l+1}, \nonumber\\
\square_{2l,3} &:=& \mathbf{A}_{+}\times \mathbf{A}_{2l+1}, \nonumber\\
\square_{2l,4} &:=& \mathbf{S}_{-}\times \mathbf{S}_{-,2l+1}\nonumber
\end{eqnarray}
for $l=0,1,\cdots, [(N-1)/2]$, and 
\begin{eqnarray}
\square_{2l-1,1} &:=& \mathbf{A}_{-}\times \mathbf{A}_{2l}, \nonumber\\
\square_{2l-1,2} &:=& \mathbf{S}_{+}\times \mathbf{S}_{-,2l}, \nonumber\\
\square_{2l-1,3} &:=& \mathbf{A}_{+}\times \mathbf{A}_{2l-1}, \nonumber\\
\square_{2l-1,4} &:=& \mathbf{S}_{-}\times  \mathbf{S}_{+,2l}\nonumber
\end{eqnarray}
for $l=1,\cdots, [N/2]$.
Then $\square_{m,j}$ is a $\sigma(j\pi/2)$-\hL submanifold 
satisfying
$\square_{2l-1,1}=\square_{2l,1}$ and $\square_{2l,3}=\square_{2l+1,3}$, 
moreover $\square_{m,j}$ and $\square_{m,j+1}$ intersect standardly with angle $\pi/2$
for $j=1,2,3,4$, where we put $\square_{m,5}=\square_{m,1}$.
Otherwise, $\square_{m,j} \cap \square_{m',j'}$ is empty. 

Now let
\begin{eqnarray}
\mathcal{V} &:=& (\{ 0,1,\cdots,N\}\times \{ 1,2,3,4\})/\sim,\nonumber
\end{eqnarray}
where $\sim$ is defined by $(2l-1,1)\sim (2l,1)$ and $(2l,3)\sim (2l+1,3)$.
We denote by $[m,j]\in \mathcal{V}$ the equivalence class represented by 
$(m,j)$.
Put 
\begin{eqnarray}
\mathcal{E} &:=& \{ [m,j]\to [m,j+1], [m,4]\to [m,1];\ m=0,\cdots, N,\ j=1,2,3\},\nonumber
\end{eqnarray}
where $x\to y$ means the directed edge whose source is $x$ and the target is $y$.
Then we obtain a quiver $(\mathcal{V},\mathcal{E}s,t)$ 
and it is easy to see that $\mathcal{E}$ 
is covered by cycles.
By this setting, we can see that $\{ \square_{m,j}\}_{[m,j]\in\mathcal{V}}$ satisfies the assumption 
of Theorem \ref{graph}, and we have the following result.
\begin{thm}
Let $X(u,\lambda)$ be as above.
Then there exists a compact smooth \sL 
submanifold diffeomorphic to 
\begin{eqnarray}
(3N+1)(\mathbb{P}^1)^2 \# N(S^1\times S^3)\nonumber
\end{eqnarray}
embedded in $X(u,\lambda)$.
\end{thm}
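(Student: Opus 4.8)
The plan is to apply Theorem \ref{graph} to the toric \hK manifold $X(u,\lambda)$, the polytopes $\{\square_{m,j}\}$ and the quiver $(\mathcal{V},\mathcal{E},s,t)$ just constructed, and then to read off the diffeomorphism type of the resulting family from Proposition \ref{topology}. Smoothness of $X(u,\lambda)$ is immediate: it splits as the product $X(u',\lambda')\times X(u'',\lambda'')$ of the minimal resolutions of $\C^2/\Z_4$ and $\C^2/\Z_{2N+2}$, and since the $-\lambda_k$ are pairwise distinct, conditions $(*1)$ and $(*2)$ of Theorem \ref{smooth} hold on each factor. The remaining hypotheses of Theorem \ref{graph} I would check exactly as in Examples $(1)$ and $(2)$: each $\square_{m,j}$ is a product of two of the segments $\mathbf{A}_\bullet,\mathbf{S}_\bullet$, so the boundary identity $\partial\square_{m,j}=\square_{m,j}\cap\bigcup_k H_k$ is clear and $\square_{m,j}$ is a $\sigma(j\pi/2)$-Delzant polytope with $L_{\square_{m,j}}\cong\mathbb{P}^1\times\mathbb{P}^1$; the standard intersection of $\square_{m,j}$ with $\square_{m,j+1}$ at angle $\pi/2$ (with $\square_{m,5}:=\square_{m,1}$) is the product of the one-dimensional computations already carried out on the Eguchi--Hanson-type factors; and $\square_{m,j}\cap\square_{m',j'}=\emptyset$ whenever $(m,j)$ and $(m',j')$ are neither equal nor joined by an edge, which reduces once more to the elementary fact that $0<a_1<\cdots<a_N$ makes the relevant segments in the two factors meet only at their prescribed common endpoints. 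Finally $\mathcal{E}$ is covered by cycles, since every edge lies in the $4$-cycle $[m,1]\to[m,2]\to[m,3]\to[m,4]\to[m,1]$.

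Granting this, Theorem \ref{graph} produces a smooth family $\{\tilde L_t\}_{0<t<\delta}$ of compact \sL submanifolds with $\lim_{t\to 0}\tilde L_t=\bigcup_{[m,j]\in\mathcal{V}}L_{\square_{m,j}}$ in the sense of current, and as in the proof of Theorem \ref{main} the underlying construction of \cite{joyce2003special} makes each $\tilde L_t$ diffeomorphic to $L_Q$. The quiver $Q$ is connected: the $4$-cycles join the four vertices of each column, while consecutive columns share a vertex through $\square_{2l-1,1}=\square_{2l,1}$ or $\square_{2l,3}=\square_{2l+1,3}$. Hence Proposition \ref{topology} gives
\begin{eqnarray}
L_Q\ \cong\ L_{\square_{m_1,j_1}}\#\cdots\#L_{\square_{m_A,j_A}}\#\,\mathbf{h}_1(Q)(S^1\times S^3),\nonumber
\end{eqnarray}
where $\mathcal{V}=\{[m_1,j_1],\dots,[m_A,j_A]\}$, the orientation of each summand is fixed by ${\rm Re}\,\Omega$ with $\Omega=(\omega_{\lambda,2}+\sqrt{-1}\omega_{\lambda,3})^2$, and $\mathbf{h}_1(Q)={\rm dim\ Ker}\,\partial$. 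Since $\mathbb{P}^1\times\mathbb{P}^1$ carries an orientation-reversing self-diffeomorphism, those orientations are immaterial and every summand is just $(\mathbb{P}^1)^2$. It remains to count: enumerating the columns (there are $N$ of them) and the two kinds of identifications defining $\mathcal{V}$ gives $\#\mathcal{V}=3N+1$ and $\#\mathcal{E}=4N$, so $(\ref{euler})$ with $\mathbf{h}_0(Q)=1$ yields $\mathbf{h}_1(Q)=\#\mathcal{E}-\#\mathcal{V}+\mathbf{h}_0(Q)=N$. Therefore $\tilde L_t$ is diffeomorphic to $(3N+1)(\mathbb{P}^1)^2\#N(S^1\times S^3)$.

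The Delzant-polytope and standard-intersection verifications are near-verbatim repetitions of Examples $(1)$ and $(2)$ and are therefore routine. The step that needs genuine care is the combinatorics of the quiver: determining precisely which pairs $(m,j)$ occur, checking that $\sim$ collapses the generators of $\mathcal{V}$ to exactly $3N+1$ classes and that $\#\mathcal{E}=4N$, so that $(\ref{euler})$ delivers $\mathbf{h}_1(Q)=N$ with no off-by-one error; verifying in addition that no two $\square_{m,j},\square_{m',j'}$ in non-adjacent positions accidentally intersect is the other delicate point. Together these form the main obstacle.
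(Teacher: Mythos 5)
Your proposal is correct and follows the same route as the paper, which itself only asserts that the hypotheses of Theorem \ref{graph} hold for the $\square_{m,j}$ and leaves the verification and the count implicit; you supply exactly the missing details (Delzant/standard-intersection checks reduced to the two ALE factors, connectedness of $Q$, and $\mathbf{h}_1(Q)=\#\mathcal{E}-\#\mathcal{V}+1$). Note only that your counts $\#\mathcal{V}=3N+1$, $\#\mathcal{E}=4N$ presuppose reading the column index as $m=0,\dots,N-1$ ($N$ columns, as forced by the stated ranges of $l$ in the definitions of $\square_{2l,j}$ and $\square_{2l-1,j}$), rather than the literal ``$m=0,\cdots,N$'' appearing in the paper's definitions of $\mathcal{V}$ and $\mathcal{E}$; with that reading your arithmetic matches the stated diffeomorphism type.
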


\section{Obstruction}\label{sec7}
Here we introduce obstructions for the existence of 
\hL and \sL submanifolds in \hK manifolds.
Throughout of this section, let 
$(M^{4n},g,I_1,I_2,I_3)$ be a \hK manifold.
\begin{prop}
Let $L\subset M$ be a \sL submanifold, 
and also a $\sigma$-\hL submanifold for some $\sigma\in S^2$. 
Then $\sigma = \sigma(k\pi/n)$ for some $k = 1,\cdots, 2n$.
\label{6.1}
\end{prop}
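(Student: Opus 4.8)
The plan is to compute the calibration form in two different ways and compare. Suppose $L$ is simultaneously special Lagrangian and $\sigma$-\hL for some $\sigma\in S^2$. Being $\sigma$-\hL means $\omega^{\sigma'}|_L = \omega^{\sigma''}|_L = 0$, which forces $(M,I^\sigma,\omega^\sigma)$ to restrict to a K\"ahler structure on $L$ with $L$ a complex (in fact, Lagrangian-free) submanifold; in particular $(\omega^\sigma)^n|_L$ is a nowhere-vanishing volume form on $L$, and up to constant it equals the Riemannian volume form of $g|_L$. On the other hand, $L$ being special Lagrangian means $\omega_1|_L = 0$ and ${\rm Im}(\omega_2+\sqrt{-1}\omega_3)^n|_L = 0$, so that ${\rm Re}(\omega_2+\sqrt{-1}\omega_3)^n|_L$ is (up to the usual constant) the Riemannian volume form of $g|_L$. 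So both $(\omega^\sigma)^n|_L$ and ${\rm Re}(\omega_2+\sqrt{-1}\omega_3)^n|_L$ are constant multiples of the same volume form.

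First I would reduce to a pointwise linear-algebra statement on a single tangent space $T_pL \subset T_pM \cong \quater^n$. Fix $p\in L$ and let $V = T_pL$. Then $V$ is a $2n$-dimensional real subspace that is Lagrangian for $\omega^{\sigma'}$ and $\omega^{\sigma''}$, hence (by the orthogonal decomposition $\quater^n = V \oplus I^{\sigma'}V$ as in the proof of Proposition \ref{model}) it is a complex subspace for $I^\sigma$ on which $\omega^\sigma$ is a K\"ahler form. Simultaneously $V$ is special Lagrangian for the Calabi--Yau structure $(I_1,\omega_1,(\omega_2+\sqrt{-1}\omega_3)^n)$. After applying a \hK rotation I may assume $\sigma = \sigma(\theta)$ for a single angle $\theta$ (replacing $(I_1,I_2,I_3)$ by the rotation carrying $\sigma$ into the $(0,\cos\theta,\sin\theta)$-circle costs nothing because the special Lagrangian condition refers to the fixed pair $(\omega_1, (\omega_2+\sqrt{-1}\omega_3)^n)$; I would phrase this carefully, since the point is that $\sigma$ must lie in a \emph{specific} circle, namely the one spanned by $I_2,I_3$). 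Then I would write $V$ in a basis adapted to $I^{\sigma(\theta)}$ and compute both $(\omega^{\sigma(\theta)})^n|_V$ and $(\omega_2+\sqrt{-1}\omega_3)^n|_V$ in terms of a single complex number of modulus equal to the Riemannian volume.

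The key computation: on a special Lagrangian subspace $V$ of $\C^m$ ($m=2n$ here, with the Calabi--Yau form $\Omega_0 = dz_1\wedge\cdots\wedge dz_{2n}$), we have $\Omega_0|_V = e^{\sqrt{-1}\phi}\,{\rm vol}_V$ for some constant phase $\phi$, and special Lagrangian means $\phi \in \pi\Z$. But here $\Omega_0$ is not quite $(\omega_2+\sqrt{-1}\omega_3)^n$; rather $(\omega_2+\sqrt{-1}\omega_3)^n = c\,\Omega_0$ for an explicit constant $c$ (a power of $2$ or $\sqrt{-1}$ depending on conventions), so the special Lagrangian phase condition for the \hK structure becomes $\phi \in \pi\Z + \arg(c)$ or similar — I would track the constant carefully. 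Meanwhile, if $V$ is also a complex Lagrangian-free subspace for $I^{\sigma(\theta)}$, then $(\omega^{\sigma(\theta)})^n|_V = \pm n!\,{\rm vol}_V$, and expanding $\omega^{\sigma(\theta)} = \cos\theta\,\omega_2 + \sin\theta\,\omega_3$ one finds $(\omega^{\sigma(\theta)})^n = {\rm Re}(e^{-\sqrt{-1}n\theta}(\omega_2+\sqrt{-1}\omega_3)^n)$ (this is the identity already used in the line "$\Omega|_{L_{\triangle_k}} = (-1)^k(\omega_1^{\sigma(k\pi/n)})^n|_{L_{\triangle_k}}$" in the proof of Theorem \ref{main}, which I would cite or re-derive). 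Combining, ${\rm Re}(e^{-\sqrt{-1}n\theta}\cdot e^{\sqrt{-1}\phi})$ must be $\pm$ (a nonzero constant) times ${\rm vol}_V$ while ${\rm Re}(e^{\sqrt{-1}\phi})$ is also $\pm$ (a nonzero constant) times ${\rm vol}_V$; since both real parts are nonzero and proportional to the same positive form, the phases $n\theta$ and $0$ must be congruent mod $\pi$, forcing $n\theta \in \pi\Z$, i.e. $\theta = k\pi/n$. Finally I would note $\sigma(k\pi/n) = \sigma((k+n)\pi/n)$ modulo sign issues so the distinct values are $k = 1,\dots,2n$, matching the statement.

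\textbf{Main obstacle.} The delicate point is the reduction showing $\sigma$ must lie in the $I_2$--$I_3$ circle \emph{before} one can write $\sigma = \sigma(\theta)$: a priori $\sigma$ could be anything in $S^2$, and the special Lagrangian condition only constrains $\omega_1|_V$ and the phase of $(\omega_2+\sqrt{-1}\omega_3)^n|_V$. I expect the cleanest route is \emph{not} to rotate coordinates but to argue directly: if $V$ is $\sigma$-\hL then ${\rm vol}_V = \pm\frac{1}{n!}(\omega^\sigma)^n|_V$, while $\omega^\sigma = \sigma_1\omega_1 + \sigma_2\omega_2 + \sigma_3\omega_3$; one must also use $\omega_1|_V = 0$ (special Lagrangian) to kill the $\sigma_1$-contribution and reduce $(\omega^\sigma)^n|_V$ to $((\sigma_2\omega_2 + \sigma_3\omega_3))^n|_V = {\rm Re}(\bar\zeta^n(\omega_2+\sqrt{-1}\omega_3)^n)|_V$ where $\zeta = \sigma_2 + \sqrt{-1}\sigma_3$ — but this is only legitimate if $\sigma_2^2+\sigma_3^2 = 1$, i.e. $\sigma_1 = 0$, which is exactly what needs to be shown. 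Resolving this circularity — i.e. proving $\sigma_1 = 0$ first, perhaps by applying the special Lagrangian volume identity to $V$ and noting that $\omega_1|_V = 0$ forces the $\omega_1$-direction to drop out of the volume comparison, hence $\sigma_1$ cannot contribute and must vanish — is where I would spend the most care. Everything after that is bookkeeping with phases and constants.
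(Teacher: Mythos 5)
Your two-step outline (first force $\sigma$ into the $I_2$--$I_3$ circle, then pin down the angle from the phase of $(\omega_2+\sqrt{-1}\omega_3)^n|_L$) matches the structure of the paper's proof, and your second step is essentially the paper's. The genuine gap is in the first step, which you flag as the ``main obstacle'' but do not actually close: proving $\sigma_1=0$. Your sketch --- ``$\omega_1|_V=0$ forces the $\omega_1$-direction to drop out of the volume comparison, hence $\sigma_1$ cannot contribute and must vanish'' --- is a non sequitur as written: the vanishing of every term of $(\sigma_1\omega_1+\sigma_2\omega_2+\sigma_3\omega_3)^n|_V$ that contains a factor $\omega_1|_V$ only tells you $(\omega^\sigma)^n|_V=(\sigma_2\omega_2+\sigma_3\omega_3)^n|_V$; it says nothing about the coefficient $\sigma_1$ itself. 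To extract $\sigma_1=0$ along your route you would need an additional norm comparison, e.g.\ Wirtinger: $(\omega^\sigma)^n|_V=n!\,\mathrm{vol}_V$ because $V$ is $I^\sigma$-complex, whereas $(\sigma_2\omega_2+\sigma_3\omega_3)^n|_V=(\sigma_2^2+\sigma_3^2)^{n/2}(\omega^{\sigma(\theta)})^n|_V$ has absolute value at most $(\sigma_2^2+\sigma_3^2)^{n/2}\,n!\,\mathrm{vol}_V$, so equality forces $\sigma_2^2+\sigma_3^2=1$. (Incidentally, the circularity you worry about is illusory: the reduction $(\sigma_2\omega_2+\sigma_3\omega_3)^n|_V=\overline{\zeta}^{\,n}(\omega_2+\sqrt{-1}\omega_3)^n|_V$ with $\zeta=\sigma_2+\sqrt{-1}\sigma_3$ is valid even when $|\zeta|<1$, because $\sigma_2\omega_3-\sigma_3\omega_2$ is a multiple of $\omega^\tau$ for a $\tau$ orthogonal to $\sigma$ and therefore restricts to zero on the $\sigma$-\hL tangent space $V$.)

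The paper avoids all of this by staying at the level of $2$-forms. Writing $(1,0,0)=p\sigma+q\tau$ with $\tau\perp\sigma$ gives $\omega_1=p\,\omega^\sigma+q\,\omega^\tau$; restricting to $L$ kills $\omega_1$ (special Lagrangian) and $\omega^\tau$ ($\sigma$-\hL), leaving $0=p\,\omega^\sigma|_L$, and since $\omega^\sigma|_L$ is nondegenerate ($L$ is $I^\sigma$-complex) this yields $p=\sigma_1=0$ with no volume forms, no $n$-th powers and no calibration inequality. You should substitute this argument for your first step; your phase computation then finishes the proof exactly as in the paper.
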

\begin{proof}
By decomposing $\R^3$ into $\R\sigma$ and its orthogonal complement, 
we have 
\begin{eqnarray}
(1,0,0) = p\sigma + q\tau\nonumber
\end{eqnarray}
for some $p,q\in \R$ and $\tau\in S^2$, where $\tau$ is orthogonal to $\sigma$.
Then we have $\omega_1= p\omega^\sigma + q\omega^\tau$ and 
\begin{eqnarray}
0= \omega_1|_L= p\omega^\sigma|_L + q\omega^\tau|_L = p\omega^\sigma|_L,\nonumber
\end{eqnarray}
since $L$ is a \sL and $\sigma$-\hL submanifold.
Hence $p$ should be $0$ since $\omega^\sigma$ is non-degenerate on $L$.
Thus we have $(1,0,0) = q\tau$, which means that $\sigma$ is orthogonal to 
$(1,0,0)$. 
Then we may write $\sigma = \sigma(\theta)$ for some $\theta\in \R$.
By the condition ${\rm Im}(\omega_2 + \sqrt{-1}\omega_3)^n|_L = 0$, 
we obtain $\theta = k\pi/n$ for some $k=1,\cdots, 2n$.
\end{proof}

\begin{prop}
Let $L$ be a compact $\sigma(\theta)$-\hL submanifold in $M$ 
for some $\theta$, 
and the orientation of $L$ be determined by 
$\omega^{\sigma(\theta)}_1$.
Then the pairing of the de Rham cohomology class $[\omega_2+\sqrt{-1}\omega_3]^n$ 
and the homology class $[L]\in H_{2n}(M,\Z)$ is given by
\begin{eqnarray}
\langle [\omega_2+\sqrt{-1}\omega_3]^n, [L]\rangle = e^{\sqrt{-1}n \theta} V, \nonumber
\end{eqnarray}
where $V(L) >0$ is the volume of $L$.
\label{6.2}
\end{prop}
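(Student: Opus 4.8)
The plan is to turn the cohomological pairing into the integral over $L$ of a power of a K\"ahler form, exploiting that a $\sigma(\theta)$-\hL submanifold annihilates two of the three (suitably rotated) fundamental forms. Recall $\sigma(\theta) = (0,\cos\theta,\sin\theta)$, and write $\omega^{\sigma(\theta)} := \omega^{\sigma(\theta)}_1 = \cos\theta\,\omega_2 + \sin\theta\,\omega_3$. First I would complete $\sigma(\theta)$ to a positively oriented orthonormal basis by setting $\tau := (0,-\sin\theta,\cos\theta)\in S^2$: a one-line cross-product computation gives $\sigma(\theta)\times\tau = (1,0,0)$, so $(\sigma(\theta),\tau,(1,0,0))$ is such a basis, and the corresponding \hK rotation is $(\omega^{\sigma(\theta)},\omega^\tau,\omega_1)$ with $\omega^\tau = \cos\theta\,\omega_3 - \sin\theta\,\omega_2$. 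Since $L$ is $\sigma(\theta)$-\hL and this property is independent of the auxiliary choice, we obtain $\omega^\tau|_L = \omega_1|_L = 0$.

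Next I would record the elementary identity
\[
e^{-\sqrt{-1}\theta}(\omega_2+\sqrt{-1}\omega_3) = (\cos\theta\,\omega_2+\sin\theta\,\omega_3) + \sqrt{-1}(\cos\theta\,\omega_3-\sin\theta\,\omega_2) = \omega^{\sigma(\theta)} + \sqrt{-1}\,\omega^\tau .
\]
Restricting to $L$ and using $\omega^\tau|_L = 0$ gives $e^{-\sqrt{-1}\theta}(\omega_2+\sqrt{-1}\omega_3)|_L = \omega^{\sigma(\theta)}|_L$, hence $(\omega_2+\sqrt{-1}\omega_3)^n|_L = e^{\sqrt{-1}n\theta}(\omega^{\sigma(\theta)}|_L)^n$. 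Integrating over the fundamental class of $L$ then yields
\[
\langle [\omega_2+\sqrt{-1}\omega_3]^n,[L]\rangle = \int_L (\omega_2+\sqrt{-1}\omega_3)^n = e^{\sqrt{-1}n\theta}\int_L (\omega^{\sigma(\theta)})^n ,
\]
which already exhibits the phase factor $e^{\sqrt{-1}n\theta}$.

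It then remains to identify $\int_L(\omega^{\sigma(\theta)})^n$ with the positive volume $V(L)$. Here I would use that the holomorphic symplectic form of $I^{\sigma(\theta)}$ is a multiple of $\omega^\tau + \sqrt{-1}\omega_1$, and this vanishes on $L$ by the computation above; by the standard \hK linear-algebra fact this forces $L$ to be an $I^{\sigma(\theta)}$-complex submanifold on which $\omega^{\sigma(\theta)}$ restricts to the K\"ahler form of the induced metric. With the orientation of $L$ taken to be the one determined by $\omega^{\sigma(\theta)}$ — i.e. the complex orientation of $I^{\sigma(\theta)}$ — the form $(\omega^{\sigma(\theta)}|_L)^n$ is a positive multiple of the Riemannian volume form, so $\int_L(\omega^{\sigma(\theta)})^n = V(L) > 0$, completing the proof.

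The computations are all routine; the point I expect to require the most care is the positivity/orientation step — checking that the orientation entering the homological pairing is exactly the complex orientation of $I^{\sigma(\theta)}$, so that the modulus of the pairing comes out as $+V(L)$ rather than $\pm V(L)$, together with the (standard but not entirely trivial) \hK linear-algebra fact that a $2n$-dimensional submanifold annihilating $\omega^\tau$ and $\omega_1$ is automatically $I^{\sigma(\theta)}$-complex with $\omega^{\sigma(\theta)}$ restricting to a K\"ahler form. Everything else is direct manipulation of the relations defining the \hK rotation.
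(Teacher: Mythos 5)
Your proof is correct and follows essentially the same route as the paper: both identify $\hat\sigma(\theta)=(0,-\sin\theta,\cos\theta)$ as the complementary direction so that $\omega_1|_L=\omega^{\hat\sigma(\theta)}|_L=0$, use the identity $e^{-\sqrt{-1}\theta}(\omega_2+\sqrt{-1}\omega_3)=\omega^{\sigma(\theta)}+\sqrt{-1}\omega^{\hat\sigma(\theta)}$, and restrict to $L$ to extract the phase $e^{\sqrt{-1}n\theta}$ times $\int_L(\omega^{\sigma(\theta)})^n=V(L)$. Your extra care about the orientation and the identification of $\int_L(\omega^{\sigma(\theta)})^n$ with the (positive) volume is a welcome elaboration of a step the paper leaves implicit, but it is not a different argument.
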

\begin{proof}
Since $L$ is $\sigma(\theta)$-holomorphic Lagrangian, 
we have 
\begin{eqnarray}
\omega_1|_L = \omega^{\hat{\sigma}(\theta)}|_L=0,\nonumber
\end{eqnarray}
where $\hat{\sigma}(\theta) = (0,-\sin\theta,\cos\theta) \in S^2$.
Then we obtain
\begin{eqnarray}
\langle [\omega_2+\sqrt{-1}\omega_3]^n, [L]\rangle &=& \int_{L}(\omega_2+\sqrt{-1}\omega_3)^n\nonumber\\
&=& \int_{L}e^{\sqrt{-1}n \theta}\{ e^{-\sqrt{-1} \theta}(\omega_2+\sqrt{-1}\omega_3) \}^n\nonumber\\
&=& e^{\sqrt{-1}n \theta}\int_{L}(\omega^{\sigma(\theta)}+\sqrt{-1}\omega^{\hat{\sigma}(\theta)})^n \nonumber\\
&=& e^{\sqrt{-1}n \theta}\int_{L}(\omega^{\sigma(\theta)})^n
= e^{\sqrt{-1}n \theta} V(L). \nonumber
\end{eqnarray}
\end{proof}
Let $L_1,L_2,\cdots, L_A$ be compact smooth submanifolds 
of dimension $2n$ embedded in $M$. 
Assume that each $L_\alpha$ is a 
$\sigma(\theta_\alpha)$-\hL submanifold for some $\theta_\alpha\in \frac{\pi}{n} \Z$, 
and the orientation of $L_\alpha$ is determined by 
$\omega^{\sigma(\theta_\alpha)}$.
Put $\varepsilon_\alpha =1$ if $\frac{n}{\pi}\theta_\alpha$ is even, 
and $\varepsilon_\alpha =-1$ if $\frac{n}{\pi}\theta_\alpha$ is odd.
\begin{prop}
Under the above setting, 
assume that there exists a compact smooth $\sigma(\theta)$-\hL submanifold $L$ in 
the homology class $\sum_{\alpha=1}^A \varepsilon_\alpha [L_\alpha]$ for some $\theta\in \R$.
Then $\{ \theta_1,\theta_2,\cdots,\theta_A\}$ is contained in $\theta + \pi\Z$.
\label{homology}
\end{prop}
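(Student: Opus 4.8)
The plan is to exploit the homology relation $[L]=\sum_{\alpha}\varepsilon_{\alpha}[L_{\alpha}]$ by pairing it not merely with $[\omega_{2}+\sqrt{-1}\omega_{3}]^{n}$, as in Proposition~\ref{6.2}, but with the whole family of classes $\eta_{p}:=[\omega_{2}+\sqrt{-1}\omega_{3}]^{p}\cup[\omega_{2}-\sqrt{-1}\omega_{3}]^{\,n-p}\in H^{2n}(M;\C)$ for $p=0,1,\dots,n$, and then to feed the resulting identities into an orthogonality-of-characters argument together with the strict positivity of the volumes.

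First I would record, exactly as in the proof of Proposition~\ref{6.2}, that for a $\sigma(\psi)$-\hL submanifold $N$ oriented by $\omega^{\sigma(\psi)}$ one has $\omega_{1}|_{N}=0$ and $(\omega_{2}\pm\sqrt{-1}\omega_{3})|_{N}=e^{\pm\sqrt{-1}\psi}\kappa_{N}$, where $\kappa_{N}:=\omega^{\sigma(\psi)}|_{N}$ is the K\"ahler form; set $V:=\int_{L}\kappa_{L}^{n}>0$ and $V_{\alpha}:=\int_{L_{\alpha}}\kappa_{L_{\alpha}}^{n}>0$. Pairing $\eta_{p}$ with the two sides of the homology relation and using these restriction formulas gives, for each $p=0,\dots,n$,
\begin{eqnarray}
e^{\sqrt{-1}(2p-n)\theta}\,V=\sum_{\alpha}\varepsilon_{\alpha}\,e^{\sqrt{-1}(2p-n)\theta_{\alpha}}\,V_{\alpha}.\nonumber
\end{eqnarray}

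The next step is to simplify the right-hand side using $\theta_{\alpha}\in\frac{\pi}{n}\Z$: writing $\theta_{\alpha}=k_{\alpha}\pi/n$ with $k_{\alpha}\in\Z$ we have $\varepsilon_{\alpha}=(-1)^{k_{\alpha}}=e^{\sqrt{-1}n\theta_{\alpha}}$, so $\varepsilon_{\alpha}e^{\sqrt{-1}(2p-n)\theta_{\alpha}}=e^{2\pi\sqrt{-1}pk_{\alpha}/n}$. The case $p=0$ then reads $e^{-\sqrt{-1}n\theta}V=\sum_{\alpha}V_{\alpha}$, whose right-hand side is a positive real, forcing $e^{\sqrt{-1}n\theta}=1$, i.e. $\theta=2\pi j_{0}/n$ for some $j_{0}\in\Z$. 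Substituting this value of $\theta$, the identities for $p=0,\dots,n-1$ become $e^{2\pi\sqrt{-1}p(2j_{0})/n}V=\sum_{\alpha}e^{2\pi\sqrt{-1}pk_{\alpha}/n}V_{\alpha}$. Grouping the $\alpha$'s by the class of $k_{\alpha}$ in $\Z/n\Z$ and setting $W_{m}:=\sum_{k_{\alpha}\equiv m}V_{\alpha}\ge0$, I would multiply by $e^{-2\pi\sqrt{-1}pm/n}$, sum over $p=0,\dots,n-1$, and invoke the orthogonality relations for the characters of $\Z/n\Z$ to conclude $W_{m}=V$ when $m\equiv 2j_{0}\pmod n$ and $W_{m}=0$ otherwise.

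Finally, because every $V_{\alpha}$ is strictly positive, $W_{m}=0$ can only hold if no $\alpha$ satisfies $k_{\alpha}\equiv m\pmod n$; hence $k_{\alpha}\equiv 2j_{0}\pmod n$ for all $\alpha$, which means $\theta_{\alpha}=k_{\alpha}\pi/n\in\frac{2\pi j_{0}}{n}+\pi\Z=\theta+\pi\Z$, as desired. The part I expect to be the crux is not any single computation but the choice to test against the full family $\eta_{p}$ rather than one class — a single pairing only yields $\theta\in\frac{2\pi}{n}\Z$ — together with the observation that it is precisely the strict positivity of the volumes $V_{\alpha}$ that upgrades the vanishing of the Fourier coefficients $W_{m}$ into a constraint on every individual angle $\theta_{\alpha}$.
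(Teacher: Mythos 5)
Your argument is correct, and it closes the proof by a genuinely different mechanism from the paper's. The paper pairs the cycle with only two classes: first $[\omega_2+\sqrt{-1}\omega_3]^n$, which pins down $e^{\sqrt{-1}n\theta}=\pm 1$ and hence $\theta=k\pi/n$, and then $[\omega^{\sigma(\theta)}]^n$, whose restriction to $L_\alpha$ is $\cos^n(\theta-\theta_\alpha)\,\kappa_{L_\alpha}^n$; the resulting identity $\sum_\alpha V(L_\alpha)=\sum_\alpha(-1)^{k-k_\alpha}\cos^n\bigl(\tfrac{k-k_\alpha}{n}\pi\bigr)V(L_\alpha)$ is finished by an extremality argument: each coefficient has modulus at most $1$, and positivity of the $V(L_\alpha)$ forces every coefficient to equal $1$, i.e.\ $k-k_\alpha\in n\Z$. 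You instead test against the whole family $\eta_p$ and run a discrete Fourier inversion on $\Z/n\Z$. The two are closely related, since $[\omega^{\sigma(\theta)}]^n=2^{-n}\sum_{p}\binom{n}{p}e^{-\sqrt{-1}(2p-n)\theta}\eta_p$ is one particular combination of your test classes, but the mechanisms differ: the paper uses equality in $|\cos|\le 1$, while you use orthogonality of characters to isolate each residue class of $k_\alpha$ modulo $n$ directly. Your version is more systematic and would generalize to situations where the single ``extremal'' combination is not available; the paper's is shorter. Both ultimately rest on the strict positivity of the volumes $V_\alpha$, exactly as you identify.

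One point to tighten: you assert $V=\int_L\kappa_L^n>0$, i.e.\ that the orientation $L$ carries as a representative of $\sum_\alpha\varepsilon_\alpha[L_\alpha]$ is the complex orientation of $I^{\sigma(\theta)}$. This is not part of the hypothesis, and in the intended application it can genuinely fail: a special Lagrangian that were $\sigma(k\pi/n)$-\hL is oriented by ${\rm Re}\,\Omega$, whose restriction is $(-1)^k\kappa_L^n$, so $V<0$ when $k$ is odd. The repair is harmless: the $p=0$ identity only forces $e^{\sqrt{-1}n\theta}=\pm1$, i.e.\ $\theta=k\pi/n$; carrying the sign $(-1)^k$ through, your inversion gives $W_m=|V|\,\delta_{m\equiv k}$ and the same conclusion $k_\alpha\equiv k\pmod n$. (The paper sidesteps this by evaluating $\langle[\omega^{\sigma(\theta)}]^n,[L]\rangle$ from the homology class rather than from $L$ itself.) You should add this half-sentence, but it does not affect the substance of your proof.
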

\begin{proof}
Since $L_\alpha$ is a 
$\sigma(\theta_\alpha)$-\hL submanifold, 
we have 
\begin{eqnarray}
\langle [\omega_2+\sqrt{-1}\omega_3]^n, \sum_{\alpha=1}^A \varepsilon_\alpha [L_\alpha ] \rangle 
&=& \sum_{\alpha=1}^A \varepsilon_\alpha e^{\sqrt{-1}n \theta_\alpha} V(L_\alpha)\nonumber\\
&=& \sum_{\alpha=1}^A \varepsilon_\alpha^2 V(L_\alpha) = \sum_{\alpha=1}^A V(L_\alpha).
 \label{eq5}
\end{eqnarray}
by Proposition \ref{6.2}.
Since $L$ is a compact smooth $\sigma(\theta)$-\hL submanifold, 
$\omega_1|_L = \omega^{\hat{\sigma}(\theta)}|_L=0$ holds, 
where we put ${\hat{\sigma}(\theta)}$ as in the proof of Proposition \ref{6.2}.
Therefore we obtain 
\begin{eqnarray}
\langle [\omega_2+\sqrt{-1}\omega_3]^n, [L] \rangle 
= e^{\sqrt{-1}n\theta} \langle [\omega^{\sigma(\theta)}]^n, [L] \rangle 
= e^{\sqrt{-1}n\theta} \langle [\omega^{\sigma(\theta)}]^n, [L] \rangle \label{eq6}
\end{eqnarray}
by the same computation in the proof of Proposition \ref{6.2}.
Then by combining (\ref{eq5})(\ref{eq6}), $\theta$ is given by $\theta = k\pi/n$ for an 
integer $k=1,\cdots,2n$.
Now we have
\begin{eqnarray}
\omega^{\sigma(\theta)} 
&=& {\rm Re}(e^{-\sqrt{-1}\theta}(\omega_2+\sqrt{-1}\omega_3)) \nonumber\\
&=& {\rm Re}(e^{-\sqrt{-1}(\theta - \theta_\alpha)}
e^{-\sqrt{-1}\theta_\alpha}(\omega_2+\sqrt{-1}\omega_3 ))\nonumber\\
&=& {\rm Re}(e^{-\sqrt{-1}(\theta - \theta_\alpha)}
(\omega^{\sigma(\theta_\alpha)}+\sqrt{-1}\omega^{\hat{\sigma}(\theta_\alpha)}))\nonumber\\
&=& \cos (\theta - \theta_\alpha)\omega^{\sigma(\theta_\alpha)}
+ \sin (\theta - \theta_\alpha)\omega^{\hat{\sigma}(\theta_\alpha)}\nonumber
\end{eqnarray}
and $\omega^{\hat{\sigma}(\theta_\alpha)}|_{L_\alpha} = 0$,
we obtain
\begin{eqnarray}
\langle [\omega^{\sigma(\theta)}]^n, [L] \rangle 
&=& \sum_{\alpha=1}^A \varepsilon_\alpha \langle [\omega^{\sigma(\theta)}]^n, [L_\alpha ] \rangle \nonumber\\
&=& \sum_{\alpha=1}^A \varepsilon_\alpha \cos^n (\theta - \theta_\alpha)\langle [\omega^{\sigma(\theta_\alpha)}
]^n, [L_\alpha ] \rangle \nonumber\\
&=& \sum_{\alpha=1}^A \varepsilon_\alpha \cos^n (\theta - \theta_\alpha) V(L_\alpha )
\label{eq7}
\end{eqnarray}
By combining (\ref{eq5})(\ref{eq6})(\ref{eq7}) and putting $\theta=k\pi/n$, we obtain 
\begin{eqnarray}
\sum_{\alpha=1}^A V(L_\alpha) 
= (-1)^k\sum_{\alpha=1}^A \varepsilon_\alpha \cos^n \bigg(\frac{k\pi}{n} - \theta_\alpha\bigg) V(L_\alpha). \nonumber
\end{eqnarray}
Next we put $\theta_\alpha = k_\alpha\pi/n$ for $k_\alpha=1,\cdots,2n$. 
Then $\varepsilon_\alpha = (-1)^{k_\alpha}$ and 
\begin{eqnarray}
\sum_{\alpha=1}^A V(L_\alpha) 
= \sum_{\alpha=1}^A (-1)^{k-k_\alpha} \cos^n \bigg(\frac{k-k_\alpha}{n}\pi\bigg) V(L_\alpha) \nonumber
\end{eqnarray}
holds.
Since every $V(L_\alpha)$ is positive, we obtain 
\begin{eqnarray}
(-1)^{k-k_\alpha} \cos^n \bigg(\frac{k-k_\alpha}{n}\pi\bigg) = 1,
\label{eq8}
\end{eqnarray}
then $k-k_\alpha$ should be contained in $n\Z$. 
If $k-k_\alpha = nl$ for some $l\in \Z$, then $\cos (\frac{k-k_\alpha}{n}\pi) = \cos l\pi = (-1)^l$ 
holds, which gives 
\begin{eqnarray}
(-1)^{k-k_\alpha} \cos^n \bigg(\frac{k-k_\alpha}{n}\pi\bigg) 
= (-1)^{nl} (-1)^{nl} = 1.\nonumber
\end{eqnarray}
Thus the assertion follows since (\ref{eq8}) holds if and only if $k-k_\alpha \in n\Z$.
\end{proof}
\begin{cor}
Under the assumption of Theorem \ref{graph}, let $\mathcal{E}$ be nonempty.
Then the \sL submanifolds $\tilde{L}_t$ obtained in Theorem \ref{graph} is not 
$\sigma$-\hL submanifold for any $\sigma\in S^2$.
\end{cor}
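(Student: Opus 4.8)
The plan is to derive a contradiction from the homological constraints established in Propositions \ref{6.1} and \ref{homology}. Assume that the special Lagrangian submanifold $\tilde{L}_t$ produced by Theorem \ref{graph} is a $\sigma$-\hL submanifold for some $\sigma\in S^2$. Being simultaneously special Lagrangian and $\sigma$-\hL, Proposition \ref{6.1} forces $\sigma=\sigma(\theta)$ with $\theta=k\pi/n$ for some $k\in\{1,\dots,2n\}$; in particular $n\theta\in\pi\Z$. It then remains to contradict Proposition \ref{homology}.

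First I would pin down the homology class carried by $\tilde{L}_t$. Each $L_{\triangle_\alpha}$ is a $\sigma(\theta_\alpha)$-\hL submanifold, and since it is special Lagrangian (this is what makes it eligible for the gluing of Theorem \ref{gluing}) one necessarily has $\theta_\alpha\in\frac{\pi}{n}\Z$, because ${\rm Im}(\omega_{\lambda,2}+\sqrt{-1}\omega_{\lambda,3})^n|_{L_{\triangle_\alpha}}=\sin(n\theta_\alpha)(\omega^{\sigma(\theta_\alpha)})^n|_{L_{\triangle_\alpha}}$ must vanish. Orienting $L_{\triangle_\alpha}$ by $\omega^{\sigma(\theta_\alpha)}$ and setting $\varepsilon_\alpha=(-1)^{n\theta_\alpha/\pi}$, the computation in the proof of Proposition \ref{6.2} gives ${\rm Re}\,\Omega|_{L_{\triangle_\alpha}}=\cos(n\theta_\alpha)(\omega^{\sigma(\theta_\alpha)})^n|_{L_{\triangle_\alpha}}=\varepsilon_\alpha(\omega^{\sigma(\theta_\alpha)})^n|_{L_{\triangle_\alpha}}$, where $\Omega=(\omega_{\lambda,2}+\sqrt{-1}\omega_{\lambda,3})^n$, so the ${\rm Re}\,\Omega$-calibrated orientation of $L_{\triangle_\alpha}$ is $\varepsilon_\alpha$ times its complex orientation. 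The family $\{\tilde{L}_t\}$ is smooth, hence all members are homologous, and by Theorem \ref{graph} it converges as a current to $\bigcup_{\alpha\in\mathcal{V}}L_{\triangle_\alpha}$ with each piece carrying its ${\rm Re}\,\Omega$-orientation (the convention already used in the proof of Theorem \ref{main}); therefore $[\tilde{L}_t]=\sum_{\alpha\in\mathcal{V}}\varepsilon_\alpha[L_{\triangle_\alpha}]$ in $H_{2n}(X(u,\lambda),\Z)$, each $[L_{\triangle_\alpha}]$ oriented by $\omega^{\sigma(\theta_\alpha)}$.

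Now Proposition \ref{homology}, applied to $L=\tilde{L}_t$, gives $\{\theta_\alpha:\alpha\in\mathcal{V}\}\subset\theta+\pi\Z$. Since $\mathcal{E}$ is nonempty, fix $h\in\mathcal{E}$; then $\triangle_{s(h)}$ and $\triangle_{t(h)}$ intersect standardly with angle $\pi/n$. The map $\psi\in GL_n\Z$ occurring in Definition \ref{def.intersection} acts only on the $(\mathbf{t}^n)^*$ factor of $\ImH\otimes(\mathbf{t}^n)^*$ and fixes the $\ImH$-direction, so the local model at the common vertex determines the defining directions $\sigma(\theta_{s(h)})$ and $\sigma(\theta_{t(h)})$ up to sign, and matching them with $\sigma(\theta_0)$ and $\sigma(\theta_0+\pi/n)$ yields $\theta_{s(h)}\equiv\theta_0$ and $\theta_{t(h)}\equiv\theta_0+\pi/n\pmod{\pi}$, hence $\theta_{t(h)}-\theta_{s(h)}\equiv\pi/n\pmod{\pi}$. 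As $n\ge 2$ this lies outside $\pi\Z$, so $\theta_{s(h)}$ and $\theta_{t(h)}$ cannot both belong to $\theta+\pi\Z$, a contradiction. Hence $\tilde{L}_t$ is not $\sigma$-\hL for any $\sigma\in S^2$.

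The only step that needs real care is the orientation bookkeeping of the second paragraph: one has to be confident that the smooth family $\tilde{L}_t$ coming out of Theorem \ref{gluing}/Theorem \ref{graph} represents precisely $\sum_\alpha\varepsilon_\alpha[L_{\triangle_\alpha}]$, i.e.\ that the calibration ${\rm Re}\,\Omega$ twists each complex orientation exactly by the parity $\varepsilon_\alpha$. Everything else is a direct assembly of Propositions \ref{6.1} and \ref{homology} together with the trivial remark that $\pi/n\notin\pi\Z$ when $n\ge 2$.
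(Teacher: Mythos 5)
Your proposal is correct and follows essentially the same route as the paper: identify the homology class of $\tilde{L}_t$ as $\sum_{\alpha}\varepsilon_\alpha[L_{\triangle_\alpha}]$, invoke Propositions \ref{6.1} and \ref{homology} to force $\{\theta_\alpha\}\subset\theta+\pi\Z$, and contradict this using a single edge of $\mathcal{E}$, along which the standard intersection with angle $\pi/n$ gives $\theta_{t(h)}-\theta_{s(h)}\equiv\pi/n\not\equiv 0\pmod{\pi}$ for $n\ge 2$. Your extra care about the orientation/parity bookkeeping and the fact that $\theta_\alpha$ is only determined mod $\pi$ is a welcome refinement of details the paper leaves implicit, but it is not a different argument.
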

\begin{proof}
First of all, note that $\tilde{L}_t$ is contains in 
$\sum_{k\in\mathcal{V}}\varepsilon_{k}[L_{\triangle_k}]$.
Let $k_1 \to k_2\in \mathcal{E}$.
Then $\triangle_{k_1}$ and $\triangle_{k_2}$ are intersecting 
standardly with angle $\pi/n$, 
hence we have $\theta_{k_2} = \theta_{k_1} + \pi/n$, 
which implies that $\{\theta_k;\ k\in\mathcal{V}\}$ contains 
$\theta_{k_1}$ and $\theta_{k_1} + \pi/n$.
Thus $\{\theta_k;\ k\in\mathcal{V}\}$ never be 
contained in $\theta+\pi\Z$ for any $\theta$ since $n>1$.
By Propositions \ref{6.1} and \ref{homology}, $\tilde{L}_t$ never becomes 
$\sigma$-\hL submanifold for any $\sigma\in S^2$.
\end{proof}

Keio University, 3-14-1 Hiyoshi, Kohoku, Yokohama 223-8522, Japan,\\
hattori@math.keio.ac.jp
\end{document}